\newtheorem{assumption}{Assumption}[section]
\newcommand{\circled}[2][]{\tikz[baseline=(char.base)]
    {\node[shape = circle, draw, inner sep = 1pt]
    (char) {\phantom{\ifblank{#1}{#2}{#1}}};%
    \node at (char.center) {\makebox[0pt][c]{#2}};}}
\crefname{hypothesis}{Hypothesis}{Hypotheses}
\title{Stochastic primal dual fixed point method for composite optimization\thanks{Submitted to the editors DATE.}}
\author{YaNan Zhu and Xiaoqun Zhang}
\begin{document}

\maketitle

\begin{abstract}
In this paper we propose a stochastic  primal dual fixed point method (SPDFP) for solving the sum of two proper lower semi-continuous convex function and one of which is composite. The method is based on the primal dual fixed point method (PDFP) proposed in \cite{PDFP} that does not require subproblem solving. Under some mild condition, the convergence is established based on two sets of assumptions: bounded and unbounded gradients and the convergence rate of the expected error of iterate is of the order $\mathcal{O}(k^{-\alpha})$ where $k$ is iteration number and $\alpha \in (0,1]$. Finally, numerical examples on graphic Lasso and logistic regressions are given to demonstrate the effectiveness of the proposed algorithm.
\end{abstract}

\begin{keywords}
Stochastic algorithm; proximal point method; primal dual; fixed point; composite optimization; graphic lasso.
\end{keywords}


\section{Introduction.}
This paper is devoted to discussing a stochastic algorithm based on the primal dual fixed point method (PDFP) \cite{PDFP} for minimizing the sum of two proper semi-continuous convex functions, one of which is a composite, i.e.,
\begin{equation}\label{pb1}
x^* = \underset{x \in \mathbb{R}^d}{\arg\min~} (f_1 \circ B) (x) + f_2(x),
\end{equation}
where $f_1(x)$ may not be differentiable and $B : \mathbb{R}^{d} \rightarrow \mathbb{R}^{m}$ is a linear transform. The function $f_2(x)$ is proper convex lower semi-continous on $\mathbb{R}^d$ with the following form:
\begin{equation}\label{form1}
f_2(x) = \frac{1}{n} \sum_{i = 1}^{n}\phi_i(x),
\end{equation}
where $\phi_i(x), i = 1,\cdots,n$ are smooth convex functions.  \\
\indent Many problems in machine learning can be formulated as in \textbf{(\ref{pb1})}. For example, the generalized lasso \cite{Generalizedlasso} is given as follows:
\begin{equation}\label{pb11}
x^* = \underset{x \in \mathbb{R}^d}{\arg\min~}
\frac{1}{n}\sum_{i = 1}^{n}\phi_i(x)+ \mu \lVert Bx \rVert_1,
\end{equation}
where $x \in \mathbb{R}^d$, $\mu > 0$ is the regularization parameter and $B$ the penalty matrix specifying a certain sparsity pattern of $x$. The function $\phi_i(x)$ can be square loss $\phi_i(x) = \frac{1}{2}(a_i^Tx - b_i)^2$, logistic loss $\phi_i(x) = \log(1 + \exp(-b_ia_i^Tx))$, and hinge loss $\phi_i(x) = \max\{ 0,1 - b_i a_i^Tx \}$, where $a_i \in \mathbb{R}^d$ denotes the $i$th sample and $b_i \in \mathbb{R}$ denotes the label of the sample.\\

\subsection{Motivation.}
The problem dates back to the structural risk minimization principle in \cite{Vapnik}, in which the goal of statistical learning is to minimize the regularized expected risk function:
\begin{equation*}
F(x) = \mathbb{E}_{\xi}(L(x,\xi)) + R(x),
\end{equation*}
where $L(x,\xi)$ is the loss when applying the prediction rule $x$ on $\xi$ and $R(x)$ is the regularizer. Rather than computing the expectation of the loss, one can use a set of training samples to minimize the regularized empirical risk:
\begin{equation*}
F_{emp}(x) = \frac{1}{n}\sum_{i = 1}^{n}L(x,\xi_i) + R(x).	
\end{equation*}
If the number of samples $n$ is large enough, the regularized expected risk $F(x)$ and the regularized empirical risk
$F_{emp}(x)$ are close with high probability.
Since the number of training samples is large, computing the gradient of the first term of the $F_{emp}(x)$ will be time consuming, which means it will be inefficient when we directly use many formal algorithms to minimize $F_{emp}(x)$. Thus, instead of using all the samples to compute the full gradient, the stochastic algorithms use one sample or a small portion of the samples to compute a noisy gradient in each iteration to reduce the complexity of the algorithms.\\

\subsection{Recent work.}
If the linear transform $B$ is the identity, the problem can be rewritten as
\begin{equation}\label{pb2}
x^* = \underset{x \in \mathbb{R}^d}{\arg\min~} f_1 (x) + f_2(x),
\end{equation}
and there are many algorithms that can solve it. For deterministic (or batch) algorithms, e.g., the proximal gradient descent (PGD) method (also known as proximal forward backward splitting (PFBS)]  \cite{PFBS} and its acceleration versions \cite{FISTA,Nesterov,FastPPT}. If $f_2(x)$ has the form \textbf{(\ref{form1})}, many researchers study the stochastic version of PGD algorithms, e.g., stochastic proximal gradient (SPG) descent \cite{converPGD,FOBOS}. The difference between PGD and SPG descent is that PGD uses all the samples to compute the full gradient in each iteration, while SPG descent uses one or a small portion of the samples to compute a noisy gradient. Owing to variance caused by random sampling, SPG descent uses diminishing step size, which leads to a sub-linear convergence rate. To accelerate the convergence rate
and use a larger step size, many researchers apply variance-reduction techniques to these algorithms, e.g., the proximal stochastic dual coordinate ascent (Prox-SDCA) method \cite{ProxSDCA} and its acceleration \cite{ACProxSDCA} and the proximal stochastic gradient method with variance reduction (Prox-SVRG) \cite{ProxSVRG}. \\
\indent If the linear transform $B \not = I$, PGD and SPG descent will have to solve $\mathrm{Prox}_{f_1 \circ B}(\cdot)$, which is not easy in many problems. To deal with it, many deterministic and stochastic algorithms were designed, e.g., the split Bregman method \cite{DR1,DR2}, the alternating direction of multipliers method (ADMM) \cite{ADMM1,ADMM2}, and the fixed-point method based on proximity operator ($\mathrm{FP^2O}$) \cite{FP2O}. Inspired by $\mathrm{FP^2O}$, the authors in \cite{PDFP} used just one step for the sub-problem of $\mathrm{FP^2O}$ and proposed the primal dual fixed-point (PDFP) algorithms. Noting the simplicity of the PDFP approach for solving the problem \textbf{(\ref{pb1})}, we study the stochastic version of the PDFP algorithms and apply it to solve machine-learning problems.\\
\indent Among the stochastic algorithms used to solve \textbf{(\ref{pb1})}, the most popular stochastic algorithms are the stochastic versions of ADMM. There are many types of the stochastic ADMMs, e.g., stochastic ADMM (STOC-ADMM) \cite{STOCADMM}, the convergence rate of which is $\mathcal{O}(1/\sqrt{k})$ for general convex functions and $\mathcal{O}{(\log(k)/k)}$ for strongly convex functions, where $k$ is the iteration number. Based on regularized dual averaging (RDA) \cite{RDA} and the online proximal gradient (OPG) \cite{OPG}, Suzuki proposed the RDA-ADMM and OPG-ADMM \cite{RDAOPGADMM}, the convergence rates of which are the same for general and strongly convex objective functions as STOC-ADMM. To accelerate the algorithms, several researchers have utilized variance-reduction techniques in the stochastic ADMMs. For example, Zhong-Kwok proposed the stochastic averaged gradient ADMM (SA-ADMM) \cite{SAGADMM} by combining the stochastic averaged gradient \cite{SAG} and ADMM. The SA-ADMM can achieve a $\mathcal{O}{(1/k)}$ convergence rate for general convex objective functions, while the rate for strongly convex functions is unknown. SA-ADMM requires extra memory to store historical gradients to approximate the full gradient, which is not scalable in term of storage. Thus, Zhao et al. proposed the scalable ADMM (SCAS-ADMM) \cite{SCASADMM} with a $\mathcal{O}(1/k)$ convergence rate for general and strongly convex objective functions. Although SA-ADMM and SCAS-ADMM use the variance-reduction technique, they both have a sublinear convergence rate, which defeats the original purpose of using variance reduction. Thus, Suzuki \cite{SDCAADMM} and Zheng-Kwok \cite{SVRGADMM} put forward the stochastic dual coordinate ascent ADMM (SDCA-ADMM) and stochastic variance-reduced ADMM (SVRG-ADMM) methods, respectively. Both algorithms can achieve a linear convergence rate for strongly convex functions. Different from SDCA-ADMM, SVRG-ADMM is more scalable in terms of storage.
Table \textbf{\ref{SADMMT}} summarizes the convergence rate and storage requirements of different stochastic ADMM algorithms.\\
In this paper. we propose a stochastic version of PDFP. Under the boundness and non-boundness gradients of function $f_2(x)$, we prove both the convergence and convergence rate of SPDFP. Compared with the stochastic version of ADMM, the computation of SPDFP only involves matrix vector multiplication and proximal operation, which is easy for many problems. Therefore, SPDFP can serve as an alternative to solve many machine-learning problems. In addition, numerical results of SPDFP on fussed lasso, graph guide support vector machine (SVM) and graph guide logistic regression on real data show some advantages compared to other state-of-the-art methods.

\begin{table}
\caption{Convergence rate of stochastic ADMMs}
\begin{center}
\begin{tabular}{c c c c c c c c}
\hline
\hline
\multicolumn{2}{c}{Algorithms}
& \multicolumn{2}{c}{general convex}
& \multicolumn{2}{c}{strongly convex}
& \multicolumn{2}{c}{storage}
\\
\cline{1-8}
 \multicolumn{2}{c}{\small{STOC-ADMM}  \cite{SVRGADMM}}
& \multicolumn{2}{c}{$\mathcal{O}(1/\sqrt{k})$}
& \multicolumn{2}{c}{$\mathcal{O}(\log(k)/k)$}
& \multicolumn{2}{c}{$\mathcal{O}(md + d^2)$} \\
 \multicolumn{2}{c}{\small{RDA-ADMM} \cite{RDAOPGADMM}}
& \multicolumn{2}{c}{$\mathcal{O}(1/\sqrt{k})$}
& \multicolumn{2}{c}{$\mathcal{O}(\log(k)/k)$}
& \multicolumn{2}{c}{$\mathcal{O}(md)$}\\
 \multicolumn{2}{c}{\small{OPG-ADMM} \cite{RDAOPGADMM}}
& \multicolumn{2}{c}{$\mathcal{O}(1/\sqrt{k})$}
& \multicolumn{2}{c}{$\mathcal{O}(\log(k)/k)$}
& \multicolumn{2}{c}{$\mathcal{O}(md)$}\\
\cline{1-8}
 \multicolumn{2}{c}{\small{SAG-ADMM} \cite{SAGADMM}}
& \multicolumn{2}{c}{$\mathcal{O}(1/k)$}
& \multicolumn{2}{c}{$unknown$}
& \multicolumn{2}{c}{$\mathcal{O}(nd + md)$}\\
 \multicolumn{2}{c}{\small{SCAS-ADMM} \cite{SCASADMM}}
& \multicolumn{2}{c}{$\mathcal{O}(1/k)$}
& \multicolumn{2}{c}{$\mathcal{O}(1/k)$}
& \multicolumn{2}{c}{$\mathcal{O}(md)$}\\
 \cline{1-8}
 \multicolumn{2}{c}{\small{SDCA-ADMM} \cite{SDCAADMM}}
& \multicolumn{2}{c}{$unknown$}
& \multicolumn{2}{c}{$linear$}
& \multicolumn{2}{c}{$\mathcal{O}(n + md)$}\\
 \multicolumn{2}{c}{\small{SVRG-ADMM} \cite{SVRGADMM}}
& \multicolumn{2}{c}{$\mathcal{O}(1/k)$}
& \multicolumn{2}{c}{$linear$}
& \multicolumn{2}{c}{$\mathcal{O}(md)$}\\
\cline{1-8}
\end{tabular}
\label{SADMMT}
\end{center}
\end{table}

\subsection{Organization of this paper.}
\noindent The paper is organized as follows.
In Section 2, we present notations and lemmas that are used throughout the paper.
In Section 3, we introduce SPDFP Algorithm 1 and its equivalent form Algorithm 2. Algorithm 1 will be used in convergence analysis, while Algorithm 2 will be used in implementation for numerical stability concerns.
We then provide the convergence analysis of the algorithms. Based on different assumptions, we give different convergence results of the algorithms.
Finally, several numerical examples are given to show the effectiveness of the algorithms.

\section{Preliminaries.}
In this section, we present notation and lemmas used througnout the paper.
\begin{definition}\label{proxf}
The operator $\mathrm{Prox}_{f}(\cdot)$ is defined by
\begin{equation*}
\begin{aligned}
\mathrm{Prox}_{f}(y)~ ( \mathbb{R}^d \rightarrow \mathbb{R}^d ) : ~
& y \rightarrow \underset{x \in \mathbb{R}^d}{\arg\min}\big\{ f(x) + \frac{1}{2} \lVert x - y \rVert_2^2 \big\}.
\end{aligned}
\end{equation*}
\end{definition}

\begin{definition}
Letting $f$ be a proper convex lower semi-continuous function on $\mathcal{X}$, then the sub-differential of $f$ is a set-valued operator $\partial f:\mathcal{X} \to 2^{\mathcal{X}}$ defined by
\begin{equation*}
\partial f(x) = \{ s \in \mathcal{X} | f(y)\geq f(x) + \langle s,y - x \rangle ~for ~ all~ y \in \mathcal{X}\}.	
\end{equation*}
\end{definition}
\begin{definition}\label{lm1}
An operator $T : \mathbb{R}^d \rightarrow \mathbb{R}^d$ is firmly non-expansive if and only if it satisfies
\begin{equation*}
\lVert Tx - Ty \rVert_2^2 \leq \big< Tx - Ty,x - y \big>
\end{equation*}
for all $(x,y) \in  \mathbb{R}^d \times \mathbb{R}^d$. It can be verified that $\mathrm{Prox}_f$  and $I - \mathrm{Prox}_f$ are firmly non-expansive.
\end{definition}

\begin{lemma}\label{lm2}
If $f$ is convex and has $\frac{1}{\beta}$-Lipschitz continuous gradients, then
\begin{equation*}
\big< \nabla f(x) - \nabla f(y),x - y \big> \geq \beta \lVert \nabla f(x) - \nabla f(y)\rVert_2^2
\end{equation*}
for all $x,y \in \mathbb{R}^m$. \\
If $f$ is $\nu$-strongly convex, then
\begin{equation*}
\big< \nabla f(x) - \nabla f(y),x - y \big> \geq \nu \lVert x - y\rVert_2^2.
\end{equation*}
\end{lemma}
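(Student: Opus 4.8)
The plan is to obtain both inequalities from the first-order characterizations of convexity, treating the Lipschitz-gradient statement first since it is the one with genuine content (it is the Baillon--Haddad co-coercivity property); the strong-convexity inequality then follows in a few lines.

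For the first part, fix $x$ and consider the auxiliary function $g(z) = f(z) - \langle \nabla f(x), z \rangle$. It is convex with $\frac{1}{\beta}$-Lipschitz gradient just like $f$, and since $\nabla g(x) = \nabla f(x) - \nabla f(x) = 0$, the point $x$ is a global minimizer of $g$. Next I would invoke the descent lemma: any $C^1$ function with $\frac{1}{\beta}$-Lipschitz gradient satisfies $g(w) \le g(v) + \langle \nabla g(v), w - v\rangle + \frac{1}{2\beta}\lVert w - v\rVert_2^2$ for all $v,w$ (proved by integrating $\frac{d}{dt}g(v + t(w-v))$ and applying Cauchy--Schwarz to the Lipschitz bound). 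Minimizing the right-hand side over $w$, i.e. substituting the gradient step $w = v - \beta\nabla g(v)$, gives
\[
\min_z g(z) = g(x) \le g(v) - \frac{\beta}{2}\lVert \nabla g(v)\rVert_2^2 \qquad \text{for all } v ,
\]
which, after unfolding the definition of $g$ and rearranging, is exactly
\[
f(v) \ge f(x) + \langle \nabla f(x), v - x\rangle + \frac{\beta}{2}\lVert \nabla f(v) - \nabla f(x)\rVert_2^2 .
\]
Applying this once with $v = y$ and once with the roles of $x$ and $y$ interchanged, and adding the two inequalities, cancels all the function values and leaves $\langle \nabla f(x) - \nabla f(y), x - y\rangle \ge \beta\lVert \nabla f(x) - \nabla f(y)\rVert_2^2$, as claimed.

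For the second part, $\nu$-strong convexity of $f$ means that $h(z) := f(z) - \frac{\nu}{2}\lVert z\rVert_2^2$ is convex. Adding the subgradient inequalities for $h$ at $x$ and at $y$ (the standard argument that the gradient of a convex function is monotone) gives $\langle \nabla h(x) - \nabla h(y), x - y\rangle \ge 0$; substituting $\nabla h(z) = \nabla f(z) - \nu z$ yields $\langle \nabla f(x) - \nabla f(y), x - y\rangle \ge \nu\lVert x - y\rVert_2^2$.

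The only step beyond bookkeeping is the co-coercivity argument, and within it the key move is recognizing that the quadratic upper bound supplied by the descent lemma is minimized in closed form by a gradient step. The one technical point to keep straight is that the auxiliary function $g$ genuinely attains its infimum (at $x$, because $\nabla g(x) = 0$ and $g$ is convex), so that writing $\min_z g(z) = g(x)$ is legitimate, and that the constant entering the descent lemma is the Lipschitz constant $1/\beta$, not $\beta$.
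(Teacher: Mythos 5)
Your proof is correct. The paper states this lemma in its preliminaries as a known fact and supplies no proof of its own, so there is nothing to compare against; your argument is the standard one. The co-coercivity half is exactly the classical Baillon--Haddad-style derivation (auxiliary function $g(z)=f(z)-\langle\nabla f(x),z\rangle$ minimized at $x$, descent lemma with constant $1/\beta$, gradient step to get $f(v)\ge f(x)+\langle\nabla f(x),v-x\rangle+\frac{\beta}{2}\lVert\nabla f(v)-\nabla f(x)\rVert_2^2$, then symmetrize and add), and the strong-convexity half is the usual monotonicity of $\nabla h$ for $h(z)=f(z)-\frac{\nu}{2}\lVert z\rVert_2^2$. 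The constants all check out, and your remark that the relevant Lipschitz constant is $1/\beta$ rather than $\beta$ is the one place where a sign-of-the-exponent slip typically occurs; you have it right.
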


\begin{definition}\label{feq}
\cite{lemma,polyak,converPGD} Define a family of functions $(\varphi_c)_{c \in \mathbb{R}}$ as follows:
\begin{equation*}
\varphi_c: (0, +\infty) \rightarrow \mathbb{R}: t \rightarrow
\left \{
\begin{aligned}
(t^c - 1)/c\ \qquad &if ~c \not = 0 \\
\log(t) \qquad & if ~c = 0.
\end{aligned}
\right.
\end{equation*}
\end{definition}

The following lemma is important in the proof of the convergence rate of SPDFP. It can also be found in Lemma 4.4 of \cite{converPGD}.
\begin{lemma}\label{lm4}
\cite{lemma,polyak,converPGD} Let $\alpha$ be in $(0,1)$, and let $c$ and $\tau$ be in $(0,\infty)$; $(\eta_k)_{k \in \mathbb{N}^*}$ is a strictly positive sequence defined by  $\eta_k = \frac{c}{k^{\alpha}}$ and $k \in \mathbb{N}^*$. $(s_k)_{k \in \mathbb{N}^*}$ is a sequence satisfying
\begin{equation*}
0 \leq s_{k + 1} \leq (1 - \eta_k)s_k + \tau \eta_k^2 \qquad \forall k \in \mathbb{N}^*.
\end{equation*}
Let $k_0$ be the smallest integer such that $\eta_{k_0} \leq 1$. Then, for every $k \geq 2k_0$ the following estimate holds:
\begin{equation*}
s_{k + 1} \leq \left\{
\begin{aligned}
& \Big( \tau c^2 \varphi_{1 - 2\alpha}(k) + s_{k_0}\exp \Big( \frac{ck_0^{1 - \alpha}}{1 - \alpha} \Big)\Big) \exp\Big( \frac{-c(1 - 2^{\alpha - 1})(k + 1)^{1 - \alpha}}{1 - \alpha} \Big) + \frac{\tau2^{\alpha}c}{(k - 2)^{\alpha}},\\
& \qquad\qquad\qquad\qquad\qquad\qquad\qquad\qquad\qquad\qquad if ~ \alpha \in (0,1). \\
& s_{k_0}\Big( \frac{k_0}{k + 1} \Big)^c + \frac{\tau c^2}{(k + 1)^c}(1 + \frac{1}{k_0} )^c \varphi_{c - 1}(k) \qquad if ~ \alpha = 1.
\end{aligned}
\right.
\end{equation*}
\end{lemma}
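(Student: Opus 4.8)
The plan is to unroll the one-step recursion into a closed form and then estimate the resulting product and weighted sum by elementary inequalities; the two branches ($\alpha\in(0,1)$ and $\alpha=1$) differ only in which estimate one uses for products of the form $\prod(1-\eta_i)$.

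First I would iterate the inequality. Since $\eta_j\le 1$ for all $j\ge k_0$, each factor $1-\eta_j$ lies in $[0,1)$, and unrolling $s_{j+1}\le(1-\eta_j)s_j+\tau\eta_j^2$ from $j=k_0$ up to $j=k$ gives
\begin{equation*}
s_{k+1}\le\Big(\prod_{j=k_0}^{k}(1-\eta_j)\Big)s_{k_0}+\tau\sum_{j=k_0}^{k}\eta_j^2\prod_{i=j+1}^{k}(1-\eta_i).
\end{equation*}
Next I would control products of consecutive factors. Using $1-x\le e^{-x}$ together with the integral comparison $\sum_{i=a}^{b}i^{-\alpha}\ge\int_{a}^{b+1}t^{-\alpha}\,dt$ (valid since $t^{-\alpha}$ is decreasing), one gets, for $a\le b$,
\begin{equation*}
\prod_{i=a}^{b}(1-\eta_i)\le\exp\Big(-c\sum_{i=a}^{b}i^{-\alpha}\Big)\le\exp\Big(-\tfrac{c}{1-\alpha}\big((b+1)^{1-\alpha}-a^{1-\alpha}\big)\Big)
\end{equation*}
when $\alpha\in(0,1)$, and $\prod_{i=a}^{b}(1-c/i)\le\big(a/(b+1)\big)^{c}$ when $\alpha=1$.

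The heart of the argument is a split of the sum at roughly the midpoint: write $\sum_{j=k_0}^{k}=\sum_{j=k_0}^{m}+\sum_{j=m+1}^{k}$ with $m$ chosen near $k/2$ — this is exactly where the hypothesis $k\ge2k_0$ is used, to keep both ranges non-empty. For the early block, $j\le m\approx k/2$ forces $(j+1)^{1-\alpha}\le2^{\alpha-1}(k+1)^{1-\alpha}$, so every tail product is bounded by the single quantity $\exp\big(-\tfrac{c(1-2^{\alpha-1})(k+1)^{1-\alpha}}{1-\alpha}\big)$; pulling this out and bounding $\sum_j\eta_j^2=c^2\sum_j j^{-2\alpha}\le c^2\varphi_{1-2\alpha}(k)$ by integral comparison produces the first term of the claimed bound, while the same exponential multiplied by $s_{k_0}\exp\big(\tfrac{ck_0^{1-\alpha}}{1-\alpha}\big)$ comes from applying the product estimate to the $s_{k_0}$ term. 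For the late block, $j>m\approx k/2$ gives $\eta_j=cj^{-\alpha}\le 2^{\alpha}c(k-2)^{-\alpha}$, hence
\begin{equation*}
\tau\sum_{j=m+1}^{k}\eta_j^2\prod_{i=j+1}^{k}(1-\eta_i)\le\tau\Big(\max_{j>m}\eta_j\Big)\sum_{j=m+1}^{k}\eta_j\prod_{i=j+1}^{k}(1-\eta_i)\le\frac{\tau2^{\alpha}c}{(k-2)^{\alpha}},
\end{equation*}
where the last step uses the telescoping identity $\eta_j\prod_{i=j+1}^{k}(1-\eta_i)=\prod_{i=j+1}^{k}(1-\eta_i)-\prod_{i=j}^{k}(1-\eta_i)$, which makes $\sum_{j}\eta_j\prod_{i=j+1}^{k}(1-\eta_i)\le1$. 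Summing the three contributions gives the $\alpha\in(0,1)$ estimate. For $\alpha=1$ I would run the same computation with the product estimate $\prod_{i=a}^{b}(1-c/i)\le(a/(b+1))^{c}$: the initial term becomes $s_{k_0}(k_0/(k+1))^{c}$, and in the sum $\tau c^2\sum_{j=k_0}^{k}j^{-2}\big((j+1)/(k+1)\big)^{c}$ one factors out $(k+1)^{-c}$, uses $(j+1)^{c}\le(1+1/k_0)^{c}j^{c}$, and bounds $\sum_{j=k_0}^{k}j^{c-2}\le\varphi_{c-1}(k)$, yielding the second branch.

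The main obstacle is the bookkeeping in the split: one has to pin down the split point so that for \emph{every} $j$ in the early block the tail product is genuinely dominated by the exponential with the sharp constant $1-2^{\alpha-1}$ (it is here, and only here, that $k\ge2k_0$ is really needed), and at the same time verify that the off-by-one losses in the integral comparisons ($\sum j^{-2\alpha}$ versus $\varphi_{1-2\alpha}(k)$, and the $(1+1/k_0)^{c}$ factor in the $\alpha=1$ case) are absorbed into the stated constants. The telescoping bound for the late block, by contrast, is clean and requires no such care.
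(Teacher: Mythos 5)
The paper does not prove this lemma; it is imported verbatim from the cited references (the text points to Lemma 4.4 of \cite{converPGD}, which in turn follows \cite{lemma,polyak}), so there is no in-paper argument to compare against. Your proposal reconstructs exactly the canonical proof from those sources: unroll the recursion, bound partial products via $1-x\le e^{-x}$ and integral comparison, split the weighted sum at $j\approx k/2$ (which is where $k\ge 2k_0$ enters), and handle the late block by the telescoping identity $\eta_j\prod_{i=j+1}^{k}(1-\eta_i)=\prod_{i=j+1}^{k}(1-\eta_i)-\prod_{i=j}^{k}(1-\eta_i)$. The outline is correct and complete at the level of ideas; the only remaining work is the off-by-one bookkeeping you already flag (the exact split index so that $(j+1)^{1-\alpha}\le 2^{\alpha-1}(k+1)^{1-\alpha}$ holds throughout the early block, and the integral-comparison constants matching $\varphi_{1-2\alpha}(k)$, $(k-2)^{-\alpha}$, and $(1+1/k_0)^{c}$), which is precisely the slack the stated constants are designed to absorb.
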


\begin{remark}\label{rk1}
From Lemma \textbf{\ref{lm4}}, it can be seen that
\begin{equation*}
\begin{aligned}
s_k = \left \{
\begin{aligned}
& \mathcal{O}(k^{-\alpha}) \qquad \qquad \qquad if ~\alpha \in (0,1) \\
& \mathcal{O}(k^{-1}) + \mathcal{O}\Big(\frac{\log(k)}{k}\Big)\qquad if ~ \alpha = 1,\qquad c  = 1.  \\
& \mathcal{O}(k^{-c}) + \mathcal{O}(k^{-1})\qquad if ~ \alpha = 1,\qquad c \not = 1.
\end{aligned}
\right.
\end{aligned}
\end{equation*}
\end{remark}

\section{Algorithms.}
In this section, we present the SPDFP and its variant. First, we present the fixed-point formulation for the solution of \textbf{(\ref{pb1})}, which can be found in \textbf{Theorem 3.1} of \cite{PDFP}.
\begin{theorem}
(\cite{PDFP},Theorem 3.1)~Let $\lambda$ and $\gamma$ be two positive numbers. Supposing that $x^*$ is a solution of (\textbf{\ref{pb1}}), then there exists $v^* \in \mathbb{R}^m$ such that
\begin{equation}\label{FP}
\left\{
\begin{aligned}
v^* & = (I - \mathrm{Prox}_{\frac{\gamma}{\lambda}f_1})(B(x^* - \gamma \nabla f_2(x) + (I - \lambda BB^T)v^*) \\
& = \tilde{T}_0(x^*,v^*) \\
x^* & = x^* - \gamma \nabla f_2(x) - \lambda B^T \tilde{T}_0(x^*,v^*)
\end{aligned}
\right.	
\end{equation}
\end{theorem}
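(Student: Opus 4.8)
The plan is to derive the fixed-point characterization directly from the first-order optimality conditions of \textbf{(\ref{pb1})}. Since $f_1\circ B$ and $f_2$ are proper convex lsc and $f_2$ is differentiable, $x^*$ solves \textbf{(\ref{pb1})} if and only if $0 \in \nabla f_2(x^*) + B^T\partial f_1(Bx^*)$, i.e.\ there exists a dual vector $w^* \in \partial f_1(Bx^*)$ with $\nabla f_2(x^*) + B^Tw^* = 0$. The first step is to record this inclusion carefully, using the chain rule for subdifferentials (valid here because $f_1$ is convex and $B$ is linear, so no constraint qualification issue arises).

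Next I would convert the subdifferential inclusion $w^* \in \partial f_1(Bx^*)$ into a proximal fixed-point equation. Using the standard identity that $u \in \partial f(z) \iff z = \mathrm{Prox}_f(z+u)$, and more generally the Moreau-type relation between $\mathrm{Prox}_{\frac{\gamma}{\lambda}f_1}$ and $I-\mathrm{Prox}_{\frac{\gamma}{\lambda}f_1}$, I would introduce $v^* = \frac{\lambda}{\gamma}\,\text{(something)}$ scaled so that $\lambda B^Tv^* = -\gamma\nabla f_2(x^*)$ is consistent; concretely, set $v^*$ to be the value forced by the inclusion and then show $v^* = (I-\mathrm{Prox}_{\frac{\gamma}{\lambda}f_1})(\,\cdot\,)$ evaluated at the argument $B(x^* - \gamma\nabla f_2(x^*) + (I-\lambda BB^T)v^*)$. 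The key algebraic point is that inside that argument, the term $x^* - \gamma\nabla f_2(x^*) - \lambda B^Tv^*$ should collapse back to $x^*$ precisely when $\gamma\nabla f_2(x^*) + \lambda B^Tv^* = 0$, which is exactly the optimality condition; this is what makes the second line $x^* = x^* - \gamma\nabla f_2(x^*) - \lambda B^T\tilde T_0(x^*,v^*)$ hold with $\tilde T_0(x^*,v^*) = v^*$.

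So the concrete chain I would carry out: (i) from optimality, pick $w^*\in\partial f_1(Bx^*)$ with $B^Tw^* = -\nabla f_2(x^*)$; (ii) define $v^* := \frac{\gamma}{\lambda}w^*$ (or whatever scaling matches the $\frac{\gamma}{\lambda}f_1$ proximal parameter), so that $\lambda B^Tv^* = -\gamma\nabla f_2(x^*)$, hence $x^* - \gamma\nabla f_2(x^*) + (I-\lambda BB^T)v^* = x^* + v^* - \lambda BB^Tv^*$ and, after multiplying by $B$, the argument simplifies; (iii) verify that $w^*\in\partial f_1(Bx^*)$ is equivalent, via the prox identity at parameter $\gamma/\lambda$, to $v^* = (I-\mathrm{Prox}_{\frac{\gamma}{\lambda}f_1})\big(Bx^* + v^*\big)$, and check this coincides with $\tilde T_0(x^*,v^*)$ once the argument is rewritten using $\lambda B^Tv^* = -\gamma\nabla f_2(x^*)$; (iv) substitute $\tilde T_0(x^*,v^*)=v^*$ into the second equation and confirm it reduces to the identity $x^*=x^*$.

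The main obstacle I anticipate is bookkeeping the scaling constants between $\lambda$, $\gamma$, and the proximal parameter $\gamma/\lambda$, and confirming that the argument of $(I-\mathrm{Prox}_{\frac{\gamma}{\lambda}f_1})$ in \textbf{(\ref{FP})} really does reduce — using $\lambda B^Tv^* = -\gamma\nabla f_2(x^*)$ — to the clean form $B(x^* + \text{(consistent terms)})$ that the prox identity demands; the firm non-expansiveness of $\mathrm{Prox}$ and $I-\mathrm{Prox}$ from Definition \ref{lm1} is not needed for existence here (it would matter only for a uniqueness/convergence statement), so the proof is essentially a careful unwinding of the optimality condition rather than anything requiring a contraction argument. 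Since this is quoted verbatim as Theorem~3.1 of \cite{PDFP}, I would also note that one may simply cite that reference, but the self-contained derivation above is short enough to include.
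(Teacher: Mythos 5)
Your proposal is correct and follows essentially the same route the paper itself uses: while the theorem is quoted with only a citation to \cite{PDFP}, the paper's Appendix proof of the analogous Lemma \textbf{\ref{lm3}} runs through exactly your chain --- first-order optimality, selection of a subgradient $v^*\in\partial f_1(Bx^*)$ scaled to match the proximal parameter, the equivalence $u\in\partial f(z)\iff z=\mathrm{Prox}_f(z+u)$ to get $v^*=(I-\mathrm{Prox}_{\frac{\gamma}{\lambda}f_1})(Bx^*+v^*)$, and substitution of the $x^*$-equation to rewrite the argument as $B(x^*-\gamma\nabla f_2(x^*))+(I-\lambda BB^T)v^*$. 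Your observation that firm non-expansiveness is not needed here is also consistent with the paper, which invokes it only later in the convergence analysis.
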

The PDFP is given as
\hspace*{\fill} \\
\begin{center}
\fbox{
\shortstack[l]{
	\textbf{Algorithm: Primal dual fixed-point method} \\
	\rule{300pt}{2pt}\\
    step 1:set $x_1 \in \mathbb{R}^d,v_1 \in \mathbb{R}^{m}$ and choose proper $\gamma>0,\lambda > 0$, then \\
    step 2:for $k = 1,2,\cdots$ \\
    \qquad\qquad $x_{k + \frac{1}{2}} = x_k - \gamma\nabla f_2(x_k)$ \\
    \qquad\qquad$v_{k + 1} = \big(I - \mathrm{Prox}_{\frac{\gamma}{\lambda}f_1}\big)\big(  Bx_{k + \frac{1}{2}} + (I - \lambda B B^T)v_k \big)$ \\
    \qquad\qquad $x_{k + 1} = x_{k + \frac{1}{2}} - \lambda B^T v_{k+1}$\\
    until the stopping criterion is satisfied.
}
}
\end{center}
\hspace*{\fill} \\
From \textbf{(\ref{FP})}, it can be seen that the PDFP is actually a fixed-point iteration. In addition, PDFP uses the full gradient and constant step size in each step, which is not applicable for many large-scale problems. Generally, the  stochastic version of PDFP will use a stochastic gradient instead of a full gradient and diminishing step size to reduce the variance caused by random sampling. \\
We now give the notation for the stochastic gradient.
Recalling the computation of the full gradient,
\begin{equation*}\label{GD}
\nabla f_2(x) = \frac{1}{n}\sum_{j = 1}^{n} \nabla \phi_j(x)
\end{equation*}
the computation of the stochastic gradient is given as follows:
\begin{itemize}
	\item Given batch size $p$, the set $\{ 1 ,2,\cdots,n\}$ is divided into $\frac{n}{p}$ non-overlapping batches, where the number of entries of each batch is $p$.
	\item Then, the batch $i$ is chosen with probability $\frac{p}{n}$ and the stochastic gradient of $f_2$ at $x$ [denoted $\nabla f_2^{[i]}(x)$] is given by
\begin{equation*}\label{SGD}
\nabla f_2^{[i]}(x) = \frac{1}{p}\sum_{j = (i - 1)p +1}^{ip} \nabla \phi_j(x).
\end{equation*}
\end{itemize}
It can be verified that the expectation of the stochastic gradient at $x$ is exactly the full gradient, i.e., $\mathbb{E}(\nabla f_2^{[i]}(x)) = \nabla f_2(x)$, which means that the stochastic gradient is an unbiased estimate of the full gradient.
\newpage
For a stochastic setting, the fixed-point formulation in (\textbf{\ref{FP}}) should be modified, and we give the following lemma.
\begin{lemma}\label{lm3}
Let $\lambda > 0, \gamma_k > 0$ be two positive numbers. Supposing that $x^*$ is a solution of (\ref{pb1}), then there exists $v^*  \in \partial f_1(Bx^*)$ such that
\begin{equation}\label{eq1}
\left\{
\begin{aligned}
v^* & = (I - \mathrm{Prox}_{h_k})\big(\frac{\lambda}{\gamma_k} B(x^* - \gamma_k\nabla f_2(x^*))
+ (I - \lambda BB^T)v^* \big)     \\
& = T_0^{(k)}(x^*,v^*) \\
x^* & = x^* - \gamma_k \nabla f_2(x^*) - \gamma_k B^T\circ T_0^{(k)}(x^*,v^*)
\end{aligned}
\right.,
\end{equation}
where $h_k(x) = \frac{\lambda}{\gamma_k}f_1(\frac{\gamma_k}{\lambda}x)$. Conversely, if $x^*, v^*$ satisfy equation \textbf{(\ref{eq1})} then $x^*$ is the solution of the problem \textbf{(\ref{pb1})}. \\
\end{lemma}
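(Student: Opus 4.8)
The plan is to mirror the proof of Theorem 3.1 in \cite{PDFP}, the only new ingredient being the rescaling $h_k(x) = \frac{\lambda}{\gamma_k} f_1\big(\frac{\gamma_k}{\lambda}x\big)$ that absorbs the variable step size $\gamma_k$ into the non-smooth term. The crucial elementary fact I will use is that this rescaling leaves the subdifferential unchanged: for any $a,b>0$ and proper convex l.s.c. $f$ one has $\partial\big(a\,f(b\,\cdot)\big)(x) = ab\,\partial f(bx)$, so with $a = \lambda/\gamma_k$ and $b = \gamma_k/\lambda$ we get $ab = 1$ and hence $\partial h_k(x) = \partial f_1\big(\frac{\gamma_k}{\lambda}x\big)$ for all $x$; in particular $\partial h_k\big(\frac{\lambda}{\gamma_k}Bx^*\big) = \partial f_1(Bx^*)$.

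For the forward implication I would start from Fermat's rule: $x^*$ solves (\ref{pb1}) iff $0 \in \partial\big((f_1\circ B) + f_2\big)(x^*)$. Since $f_2$ is differentiable the sum rule applies, and the chain rule $\partial(f_1\circ B) = B^{T}\partial f_1(B\,\cdot)$ is available under the standing constraint qualification used in \cite{PDFP}, so $0 \in B^{T}\partial f_1(Bx^*) + \nabla f_2(x^*)$. Thus there exists $v^* \in \partial f_1(Bx^*)$ with $\nabla f_2(x^*) = -B^{T}v^*$. Substituting this identity into the argument of $I-\mathrm{Prox}_{h_k}$ in (\ref{eq1}), the term $-\lambda B\nabla f_2(x^*)$ becomes $+\lambda BB^{T}v^*$ and cancels the $-\lambda BB^{T}v^*$ coming from $(I-\lambda BB^T)v^*$, so the argument collapses to $\frac{\lambda}{\gamma_k}Bx^* + v^*$. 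Since $v^* \in \partial f_1(Bx^*) = \partial h_k\big(\frac{\lambda}{\gamma_k}Bx^*\big)$, the variational characterization of the proximity operator ($p = \mathrm{Prox}_{g}(y) \Leftrightarrow y - p \in \partial g(p)$) gives $\mathrm{Prox}_{h_k}\big(\frac{\lambda}{\gamma_k}Bx^* + v^*\big) = \frac{\lambda}{\gamma_k}Bx^*$, hence $(I - \mathrm{Prox}_{h_k})\big(\frac{\lambda}{\gamma_k}Bx^* + v^*\big) = v^*$; that is, $T_0^{(k)}(x^*,v^*) = v^*$, the first line of (\ref{eq1}). Feeding $T_0^{(k)}(x^*,v^*) = v^*$ and $\nabla f_2(x^*) = -B^{T}v^*$ into the last line of (\ref{eq1}) returns $x^* - \gamma_k\nabla f_2(x^*) - \gamma_k B^{T}v^* = x^*$, which establishes it.

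For the converse I would assume $(x^*,v^*)$ satisfy (\ref{eq1}) and read the three relations backwards. The first line already says $T_0^{(k)}(x^*,v^*) = v^*$; using this in the third line yields $\gamma_k\big(\nabla f_2(x^*) + B^{T}v^*\big) = 0$, i.e.\ $\nabla f_2(x^*) = -B^{T}v^*$. Feeding this back into the argument of the prox in the first line (again the $\pm\lambda BB^{T}v^*$ terms cancel), that line becomes $\mathrm{Prox}_{h_k}\big(\frac{\lambda}{\gamma_k}Bx^* + v^*\big) = \frac{\lambda}{\gamma_k}Bx^*$, whose optimality condition is $v^* \in \partial h_k\big(\frac{\lambda}{\gamma_k}Bx^*\big) = \partial f_1(Bx^*)$. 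Combining $v^* \in \partial f_1(Bx^*)$ with $\nabla f_2(x^*) + B^{T}v^* = 0$ gives $0 \in B^{T}\partial f_1(Bx^*) + \nabla f_2(x^*) = \partial\big((f_1\circ B) + f_2\big)(x^*)$, so $x^*$ minimizes (\ref{pb1}) by Fermat's rule.

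The computations are all routine; the only points requiring care are keeping the $\lambda/\gamma_k$ bookkeeping consistent when simplifying the prox argument, and the scaling identity $\partial h_k(x) = \partial f_1\big(\frac{\gamma_k}{\lambda}x\big)$, which is where the particular form of $h_k$ enters and which is really the reason that replacing the constant $\gamma$ by a varying $\gamma_k$ costs nothing. I do not anticipate a genuine obstacle beyond these; the constraint qualification needed for $\partial(f_1\circ B) = B^{T}\partial f_1(B\,\cdot)$ is inherited from the setting of \cite{PDFP}.
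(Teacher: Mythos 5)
Your proof is correct and follows essentially the same route as the paper's: Fermat's rule plus the chain rule to produce $v^*\in\partial f_1(Bx^*)$ with $\nabla f_2(x^*)=-B^Tv^*$, cancellation of the $\lambda BB^Tv^*$ terms in the prox argument, and the subdifferential characterization of the proximity operator. The only cosmetic difference is that you handle the rescaling via the identity $\partial h_k(x)=\partial f_1\big(\tfrac{\gamma_k}{\lambda}x\big)$ directly, whereas the paper first writes the fixed-point relation for $\mathrm{Prox}_{\frac{\gamma_k}{\lambda}f_1}$ and then converts to $\mathrm{Prox}_{h^k}$ using its auxiliary scaling lemma for proximity operators; you also spell out the converse, which the paper leaves as ``similarly verified.''
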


\begin{proof}
See Appendix.
\end{proof}

\noindent Using Lemma \textbf{\ref{lm3}}, considering the discussion above, and noting the fact that
\begin{equation}
\begin{aligned}
& v^*  = \big(I - \mathrm{Prox}_{h_k}\big)\big( \frac{\lambda}{\gamma_k}B(x^* - \gamma_k \nabla f_2(x^*)) + (I - \lambda B B^T)v^* \big) \\
& \Leftrightarrow
v^*  = \frac{\lambda}{\gamma_k}\big(I - \mathrm{Prox}_{\frac{\gamma_k}{\lambda}f_1}\big)\big(B(x^* - \gamma_k \nabla f_2(x^*)) + (I - \lambda B B^T)\frac{\gamma_k}{\lambda}v^* \big) ,
\end{aligned} 	
\end{equation} we introduce the SPDFP:

\hspace*{\fill} 
\begin{center}
\fbox{
\setlength{\fboxsep}{0.1cm}
\shortstack[l]{
	\textbf{Algorithm 1: Stochastic primal dual fixed-point method} \\
    \rule{300pt}{2pt}\\
    Step 1: Set $x_1 \in \mathbb{R}^d,v_1 \in \mathbb{R}^{m}$ and choose proper $c,\lambda$, and $\alpha \in (0,1]$\\
    Step 2: For $k = 1,2,\cdots$ \\
    \qquad\qquad $\gamma_k = \frac{c}{k^{\alpha}}$ \\
    \qquad\qquad ~choose $i_k$  randomly from $1,2,\cdots,n/p$ with probability $\frac{p}{n}$.\\
    \qquad\qquad $x_{k + \frac{1}{2}} = x_k - \gamma_k \nabla f_2^{[i_k]}(x_k)$ \\
    \qquad\qquad$v_{k + 1} = \frac{\lambda}{\gamma_k}\big(I - \mathrm{Prox}_{\frac{\gamma_k}{\lambda}f_1}\big)\big(  Bx_{k + \frac{1}{2}} + (I - \lambda B B^T)\frac{\gamma_k}{\lambda}v_k \big)$ \\
    \qquad\qquad $x_{k + 1} = x_{k + \frac{1}{2}} - \gamma_k B^T v_{k+1}$\\
    until the stopping criterion is satisfied.
}
}
\end{center}
\newpage
\hspace*{\fill} \\
\noindent Letting $v_{k + 1} := \frac{\gamma_k}{\lambda}v_{k + 1}$, we have the following equivalent form:
\begin{center}
\fbox{
\shortstack[l]{
  \textbf{Algorithm 2: Stochastic primal dual fixed-point method} \\
  \rule{300pt}{2pt}\\
  Step 1: ~Set $x_1 \in \mathbb{R}^d,v_1 \in \mathbb{R}^{m}$ and choose proper $c$ and $\alpha \in (0,1)$\\
    Step 2: \quad Let $k = 1,\gamma_1 = c$\\
    \qquad\qquad choose $i_1$ randomly from $1,2,\cdots,n/p$ with probability $\frac{p}{n}$\\
    \qquad\qquad$x_{1 + \frac{1}{2}} = x_1 - \gamma_1 \nabla f_2^{[i_1]}(x_1)$   \\
    \qquad\qquad$v_{2} = \big(I - \mathrm{Prox}_{\frac{\gamma_1}{\lambda}f_1}\big)\big(  Bx_{1 + \frac{1}{2}} + (I - \lambda B B^T)\frac{\gamma_1}{\lambda}v_1 \big)$ \\
    \qquad\qquad $x_{2} = x_{1 + \frac{1}{2}} - \lambda B^T v_{2}$\\
    Step 3: ~For $k = 2,3,\cdots$ \\
    \qquad\qquad $\gamma_k = \frac{c}{k^{\alpha}}$ \\
    \qquad\qquad ~choose $i_k$ randomly from $1,2,\cdots,n/p$ with probability $\frac{p}{n}$\\
    \qquad\qquad $x_{k + \frac{1}{2}} = x_k - \gamma_k \nabla f_2^{[i_k]}(x_k)$\\
    \qquad\qquad$v_{k + 1} = \big(I - \mathrm{Prox}_{\frac{\gamma_k}{\lambda}f_1}\big)\big( Bx_{k + \frac{1}{2}} + \big(\frac{k - 1}{k}\big)^{\alpha}(I - \lambda B B^T)v_k \big)$ \\
    \qquad\qquad $x_{k + 1} = x_{k + \frac{1}{2}} - \lambda B^T v_{k+1}$\\
	until the stop criterion is satisfied.
}
}
\end{center}

\begin{remark}
Algorithms 1 and 2 are equivalent by changing the notation. In the following, we use Algorithm 1 for convergence analysis and Algorithm 2 for the numerical simulations.
\end{remark}

It can be seen that the differences between PDFP and SPDFP (Algorithm 2) lie in step size and the computation of gradient: PDFP uses a constant step size and full gradient, while SPDFP uses diminishing step size and stochastic gradient. The reason for using diminishing step size in SPDFP is to reduce the variance caused by random sampling. Moreover, there is a difference in the update of the variable $v_k$, i.e., in SPDFP there is a factor $(\frac{k - 1}{k})^{\alpha}$.
\section{Convergence analysis.}
In this section, we present the convergence results of SPDFP. Under the assumption of bounded and non-boundness gradients of function $f_2(x)$, we give both the convergence and convergence rate of SPDFP. In addition, Table \textbf{\ref{cgsum}} is given to summarize the convergence results.

Before we show the convergence, we present the basic results on the sequence.
\begin{lemma}\label{cglm1}
Choosing $0 \leq \lambda \leq 1/\rho_{\max}(BB^T)$ and letting $(x_k,v_k)$ be the iteration of Algorithm 1 and $(x^*,v^*)$ as in Lemma \textbf{\ref{lm3}}, we then have the following estimate:
\begin{equation}\label{cglmeq1}
\begin{aligned}
& \mathbb{E}^{(k + 1)}\Big(\lVert x_{k + 1} - x^* \rVert_2^2 + \frac{\gamma_{k + 1}^2}{\lambda}\lVert v_{k + 1} - v^* \rVert_2^2\Big) \\
& \leq \mathbb{E}^{(k)}\big(\lVert x_k - x^* \rVert_2^2\big) + \frac{\gamma_{k}^2}{\lambda}(1 - \lambda \rho_{\min}(B B^T)) \mathbb{E}^{(k)}( \lVert v_k - v^* \rVert_2^2 ) \\
& - 2\gamma_k \mathbb{E}^{(k)}\big< \nabla f_2(x_k) - \nabla f_2(x^*), x_k - x^*\big> + \gamma_k^2 \mathbb{E}^{(k + 1)}(\lVert \nabla f_2^{[i_k]}(x_k) - \nabla f_2(x^*) \rVert_2^2),
\end{aligned}
\end{equation}
where $\mathbb{E}^{(k)}(\cdot)$ denotes the expectation up to the $k$-th iteration and $\rho_{max}(BB^T),\rho_{min}(BB^T)$ denote the maximum and minimum eigenvalues of matrix $BB^T$, respectively.
\end{lemma}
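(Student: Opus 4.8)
The plan is to expand the squared-norm Lyapunov quantity along one iteration of Algorithm~1, using the fixed-point characterization of $(x^*,v^*)$ from Lemma~\ref{lm3} and the firm non-expansiveness of $I-\mathrm{Prox}$. First I would write the update in the ``scaled'' variables $w_{k+1}:=\tfrac{\gamma_k}{\lambda}v_{k+1}$, so that
$w_{k+1}=\bigl(I-\mathrm{Prox}_{\frac{\gamma_k}{\lambda}f_1}\bigr)\bigl(Bx_{k+1/2}+(I-\lambda BB^T)w_k'\bigr)$ where $w_k'=\tfrac{\gamma_k}{\lambda}v_k$, and the analogous fixed-point identity for $v^*$ (with the same $\gamma_k$) reads $\tfrac{\gamma_k}{\lambda}v^*=\bigl(I-\mathrm{Prox}_{\frac{\gamma_k}{\lambda}f_1}\bigr)\bigl(B(x^*-\gamma_k\nabla f_2(x^*))+(I-\lambda BB^T)\tfrac{\gamma_k}{\lambda}v^*\bigr)$. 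Subtracting these and using that $T:=I-\mathrm{Prox}_{\frac{\gamma_k}{\lambda}f_1}$ is firmly non-expansive (Definition~\ref{lm1}), one gets
$\|w_{k+1}-\tfrac{\gamma_k}{\lambda}v^*\|_2^2\le\bigl\langle w_{k+1}-\tfrac{\gamma_k}{\lambda}v^*,\;B(x_{k+1/2}-x^*+\gamma_k\nabla f_2(x^*))+(I-\lambda BB^T)(w_k'-\tfrac{\gamma_k}{\lambda}v^*)\bigr\rangle$.

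Next I would combine this with the primal update. Writing $x_{k+1}-x^*=(x_{k+1/2}-x^*+\gamma_k\nabla f_2(x^*))-\lambda B^T w_{k+1}+\lambda B^T\tfrac{\gamma_k}{\lambda}v^*-\gamma_k\nabla f_2(x^*)$ and using $x^*=x^*-\gamma_k\nabla f_2(x^*)-\gamma_k B^T\cdot\tfrac{\lambda}{\gamma_k}(\text{...})$ from Lemma~\ref{lm3}, expand $\|x_{k+1}-x^*\|_2^2+\tfrac{\gamma_{k+1}^2}{\lambda}\|v_{k+1}-v^*\|_2^2$. The key algebraic identity, which is exactly the one used in the original PDFP analysis, is that the cross terms coming from $-\lambda B^T w_{k+1}$ in the primal part cancel against the inner-product bound above, leaving
$\|x_{k+1}-x^*\|_2^2+\tfrac{\gamma_k^2}{\lambda}\|v_{k+1}-v^*\|_2^2\le\|x_{k+1/2}-x^*+\gamma_k\nabla f_2(x^*)\|_2^2+\tfrac{\gamma_k^2}{\lambda}\langle v_k-v^*,(I-\lambda BB^T)(v_k-v^*)\rangle$ (after converting back from the scaled variables), where the matrix cross term contributes $\tfrac{\gamma_k^2}{\lambda}(I-\lambda BB^T)$ exactly because $0\le\lambda\le 1/\rho_{\max}(BB^T)$ guarantees $I-\lambda BB^T\succeq 0$ and $\preceq(1-\lambda\rho_{\min}(BB^T))I$. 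Then I would substitute $x_{k+1/2}=x_k-\gamma_k\nabla f_2^{[i_k]}(x_k)$, so that $x_{k+1/2}-x^*+\gamma_k\nabla f_2(x^*)=x_k-x^*-\gamma_k(\nabla f_2^{[i_k]}(x_k)-\nabla f_2(x^*))$; expanding that square gives $\|x_k-x^*\|_2^2-2\gamma_k\langle\nabla f_2^{[i_k]}(x_k)-\nabla f_2(x^*),x_k-x^*\rangle+\gamma_k^2\|\nabla f_2^{[i_k]}(x_k)-\nabla f_2(x^*)\|_2^2$.

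Finally I would take conditional expectation over $i_k$ (then tower up to $\mathbb{E}^{(k+1)}$), using unbiasedness $\mathbb{E}_{i_k}(\nabla f_2^{[i_k]}(x_k))=\nabla f_2(x_k)$ to replace the middle term by $-2\gamma_k\langle\nabla f_2(x_k)-\nabla f_2(x^*),x_k-x^*\rangle$; the first and last terms pass through the expectation unchanged (the last one keeping its stochastic-gradient form, as stated), and the $v_k$ term is $\mathcal{F}_k$-measurable. Replacing $\gamma_{k+1}\le\gamma_k$ on the left (since $\gamma_k=c/k^\alpha$ is decreasing, so $\tfrac{\gamma_{k+1}^2}{\lambda}\|v_{k+1}-v^*\|_2^2\le\tfrac{\gamma_k^2}{\lambda}\|v_{k+1}-v^*\|_2^2$) yields \eqref{cglmeq1}.

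The main obstacle is bookkeeping the cross terms correctly so that the $B^T$-coupling between the primal and dual blocks telescopes into precisely the $(I-\lambda BB^T)$ term and no residual cross term survives — this is the heart of the PDFP fixed-point estimate and must be done with care in the scaled variables, since the scaling factor $\gamma_k/\lambda$ changes from step to step and one must be sure the fixed-point identity for $v^*$ is invoked with the current $\gamma_k$. A secondary point requiring attention is the clean separation of the deterministic inequality (firm non-expansiveness plus the $B$-algebra) from the probabilistic step (conditioning on $i_k$), so that the stochastic gradient appears only in the two places indicated in \eqref{cglmeq1}.
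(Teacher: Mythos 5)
Your proposal is correct and follows essentially the same route as the paper's Appendix proof: firm non-expansiveness of $I-\mathrm{Prox}$ applied to the dual update (in scaled variables, which is equivalent to the paper's use of $\mathrm{Prox}_{h^k}$ via the scaling lemma), expansion of the primal square, cancellation of the $B^T$ cross terms into the $M=I-\lambda BB^T$ quadratic form, the bound $0\preceq M\preceq(1-\lambda\rho_{\min}(BB^T))I$, monotonicity $\gamma_{k+1}\leq\gamma_k$ on the left-hand side, and conditional unbiasedness of $\nabla f_2^{[i_k]}$ for the inner-product term only. No substantive difference from the paper's argument.
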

\begin{proof}
See Appendix.\qquad
\end{proof}

The inequality \textbf{(\ref{cglmeq1})} is essential in the following convergence analysis. Here and in what follows we use the notation
\begin{equation}\label{notation1}
a_k = \mathbb{E}^{(k)}\Big(\lVert x_{k} - x^* \rVert_2^2 + \frac{\gamma_{k}^2}{\lambda}\lVert v_{k} - v^* \rVert_2^2\Big),	
\end{equation}
and then the inequality \textbf{(\ref{cglmeq1})} in Lemma \textbf{\ref{cglmeq1}} can be rewritten as
\begin{equation}\label{cglmeq11}
\begin{aligned}
a_{k + 1} & \leq \mathbb{E}^{(k)}\big(\lVert x_k - x^* \rVert_2^2\big) + \frac{\gamma_{k}^2}{\lambda}(1 - \lambda \rho_{\min}(B B^T)) \mathbb{E}^{(k)}( \lVert v_k - v^* \rVert_2^2 ) \\
& - 2\gamma_k \mathbb{E}^{(k)}\big< \nabla f_2(x_k) - \nabla f_2(x^*), x_k - x^*\big> + \gamma_k^2 \mathbb{E}^{(k + 1)}(\lVert \nabla f_2^{[i_k]}(x_k) - \nabla f_2(x^*) \rVert_2^2).
\end{aligned}
\end{equation}
\subsection{Bounded gradient.}
Now we establish the convergence of SPDFP based on the uniform boundedness of the gradient of $f_2(x)$.
\begin{theorem}\label{thm1}
Assuming $f_2(x) = \frac{1}{n}\sum_{i = 1}^{n} \phi_i(x) + \frac{\nu}{2} \lVert x \rVert_2^2$ ~for some $\nu > 0$ and $\nabla \phi_i(x),i = 1,\cdots,n$ are uniformly bounded. If we choose $c > 0,\alpha \in (0.5 ,1]$ and $0< \lambda < \frac{1}{\rho(BB^T)}$, then
\begin{equation}\label{thm1eq1}
\underset{k \to \infty}{\underline{\lim}}\mathbb{E}^{(k)}\big(\lVert x_k - x^* \rVert_2^2 \big) \to 0.
\end{equation}
\end{theorem}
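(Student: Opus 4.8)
The plan is to reduce the one-step estimate of Lemma~\ref{cglm1} to a scalar recursion and then run a Robbins--Monro type summation argument. Write $b_k := \mathbb{E}^{(k)}(\lVert x_k - x^* \rVert_2^2)$ and keep the abbreviation $a_k$ of \eqref{notation1}, so that $0 \le b_k \le a_k$ and $\frac{\gamma_k^2}{\lambda}\mathbb{E}^{(k)}(\lVert v_k - v^*\rVert_2^2) = a_k - b_k$. The hypothesis $0 < \lambda < 1/\rho(BB^T)$ of the theorem is exactly what Lemma~\ref{cglm1} requires, and since $\rho_{\min}(BB^T) \ge 0$ we have $\frac{\gamma_k^2}{\lambda}(1 - \lambda\rho_{\min}(BB^T))\mathbb{E}^{(k)}(\lVert v_k - v^*\rVert_2^2) \le a_k - b_k$; substituting this into \eqref{cglmeq11} lets the $v$-contribution recombine with $b_k$ to give $a_k$, leaving
\begin{equation*}
a_{k+1} \le a_k - 2\gamma_k\, \mathbb{E}^{(k)}\langle \nabla f_2(x_k) - \nabla f_2(x^*),\, x_k - x^*\rangle + \gamma_k^2\, \mathbb{E}^{(k+1)}\big(\lVert \nabla f_2^{[i_k]}(x_k) - \nabla f_2(x^*)\rVert_2^2\big).
\end{equation*}
In particular no full-rank condition on $B$ is needed: if $\rho_{\min}(BB^T) = 0$ the discarded $v$-term is only larger, in our favour.

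Next I would bound the two remaining terms using the two structural hypotheses. Since $f_2 = \frac1n\sum_i\phi_i + \frac{\nu}{2}\lVert\cdot\rVert_2^2$ is $\nu$-strongly convex, the second inequality of Lemma~\ref{lm2} gives $\langle\nabla f_2(x_k) - \nabla f_2(x^*), x_k - x^*\rangle \ge \nu\lVert x_k - x^*\rVert_2^2$, so the inner-product term contributes $-2\nu\gamma_k b_k$. For the variance term, split the stochastic-gradient error into the error of the sub-sampled gradient of $\frac1n\sum_i\phi_i$ plus the deterministic linear part $\nu(x_k - x^*)$; the first piece has norm at most $2M$ along the whole trajectory, where $M := \sup_{i,x}\lVert\nabla\phi_i(x)\rVert_2 < \infty$ by the uniform boundedness assumption, so $\lVert a+b\rVert_2^2 \le 2\lVert a\rVert_2^2 + 2\lVert b\rVert_2^2$ and taking expectations give $\mathbb{E}^{(k+1)}(\lVert\nabla f_2^{[i_k]}(x_k) - \nabla f_2(x^*)\rVert_2^2) \le 8M^2 + 2\nu^2 b_k$. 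Plugging both in,
\begin{equation*}
a_{k+1} \le a_k - 2\nu\gamma_k b_k + \gamma_k^2\big(8M^2 + 2\nu^2 b_k\big), \qquad k \in \mathbb{N}^*.
\end{equation*}

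The key point --- and the only genuinely delicate one --- is that the stray term $2\nu^2\gamma_k^2 b_k$, which cannot be controlled a priori because $b_k$ is not known to be bounded, is absorbed by the strong-convexity decrease $-2\nu\gamma_k b_k$ once $\gamma_k$ is small. As $\gamma_k = c/k^{\alpha}$ decreases to $0$, pick $k_1$ minimal with $\gamma_{k_1} \le \frac{1}{2\nu}$; then $2\nu^2\gamma_k^2 b_k \le \nu\gamma_k b_k$ for $k \ge k_1$, whence
\begin{equation*}
a_{k+1} \le a_k - \nu\gamma_k b_k + 8M^2\gamma_k^2, \qquad k \ge k_1.
\end{equation*}
An immediate induction from the finite deterministic value $a_1$ shows all $a_k$ are finite. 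Summing from $k_1$ to $N$ and using $a_{N+1}\ge 0$ gives $\nu\sum_{k=k_1}^{N}\gamma_k b_k \le a_{k_1} + 8M^2\sum_{k\ge 1}\gamma_k^2$; since $\alpha > \frac12$, $\sum_k\gamma_k^2 = c^2\sum_k k^{-2\alpha} < \infty$, so $\sum_{k=k_1}^{\infty}\gamma_k b_k < \infty$.

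Finally, since $\alpha \le 1$ we have $\sum_k\gamma_k = c\sum_k k^{-\alpha} = \infty$. If $\liminf_{k\to\infty} b_k$ were some $L > 0$, then $b_k \ge L/2$ for all $k$ beyond some index, forcing $\sum_k\gamma_k b_k = \infty$ and contradicting the above; hence $\liminf_{k\to\infty}\mathbb{E}^{(k)}(\lVert x_k - x^*\rVert_2^2) = 0$, which is \eqref{thm1eq1}. The two constraints on $\alpha$ thus play complementary roles --- $\alpha > \frac12$ makes the noise contribution $\sum\gamma_k^2$ summable, while $\alpha \le 1$ keeps $\sum\gamma_k$ divergent --- and the main obstacle is the absorption step above; everything else is routine supermartingale-style bookkeeping.
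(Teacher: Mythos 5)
Your proposal is correct and follows essentially the same route as the paper's proof: reduce Lemma~\ref{cglm1} to the scalar recursion by discarding the $v$-term, use strong convexity for the cross term and the uniform gradient bound (via $\lVert a+b\rVert_2^2 \le 2\lVert a\rVert_2^2 + 2\lVert b\rVert_2^2$) for the variance term, absorb the stray $\gamma_k^2 b_k$ once $\gamma_k \le \frac{1}{2\nu}$, then sum and conclude by contradiction from $\sum\gamma_k^2<\infty$ and $\sum\gamma_k=\infty$. The only differences are cosmetic (you bound $\lVert\nabla l^{[i_k]}(x_k)-\nabla l(x^*)\rVert_2$ directly by $2M$ rather than splitting once more, and your final contradiction step is phrased slightly more carefully than the paper's).
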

\begin{proof}
 Denote $l(x) = \frac{1}{n}\sum_{i = 1}^{n} \phi_i(x)$ and $k_0$ to be the smallest number such that $\gamma_{k_0} < \frac{1}{2\nu}$; then, by Eq. \textbf{(\ref{cglmeq11})}, we have, for $k > k_0$,
\begin{equation}\label{thm1eq2}
\begin{aligned}
a_{k + 1}
&\overset{\tiny{\circled{1}}}{\leq} \mathbb{E}^{(k)}\big(\lVert x_k - x^* \rVert_2^2\big) + \frac{\gamma_{k}^2}{\lambda}(1 - \lambda \rho_{\min}(B B^T)) \mathbb{E}^{(k)}( \lVert v_k - v^* \rVert_2^2 ) \\
& - 2\gamma_k \mathbb{E}^{(k)}\big< \nabla f_2(x_k) - \nabla f_2(x^*), x_k - x^*\big> + \gamma_k^2 \mathbb{E}^{(k + 1)}
\Big(\lVert \nabla f_2^{[i_k]}(x_k) - \nabla f_2(x^*) \rVert_2^2\Big)  \\
& \leq a_k - 2\gamma_k \mathbb{E}^{(k)}\big< \nabla f_2(x_k) - \nabla f_2(x^*), x_k - x^*\big> \\
& \qquad\qquad\qquad  + \gamma_k^2 \mathbb{E}^{(k + 1)}\Big(\lVert \nabla l^{[i_k]}(x_k) + \nu x_k - \nabla l(x^*) - \nu x^*\rVert_2^2\Big)  \\
& \overset{\tiny{\circled{2}}}{\leq} a_k - 2\gamma_k \mathbb{E}^{(k)}\big< \nabla f_2(x_k) - \nabla f_2(x^*), x_k - x^*\big> + 2\nu^2\gamma_k^2\mathbb{E}^{(k)}(\lVert x_k - x^*\rVert_2^2)\\
& \qquad \qquad \qquad + 2\gamma_k^2\mathbb{E}^{(k + 1)}(\lVert \nabla l^{[i_k]}(x_k)- \nabla l(x^*)\rVert_2^2)\\,
& \overset{\tiny{\circled{3}}}{\leq} a_k - 2\gamma_k \mathbb{E}^{(k)}\big< \nabla f_2(x_k) - \nabla f_2(x^*), x_k - x^*\big> + 2\nu^2\gamma_k^2\mathbb{E}^{(k)}(\lVert x_k - x^*\rVert_2^2)\\
& \qquad \qquad \qquad  + 4\gamma_k^2\big(\mathbb{E}^{(k + 1)}(\lVert \nabla l^{[i_k]}(x_k)\rVert_2^2) + \lVert \nabla l(x^*)\rVert_2^2\big)\\
& \overset{\tiny{\circled{4}}}{\leq} a_k - 2\gamma_k \mathbb{E}^{(k)}\big< \nabla f_2(x_k) - \nabla f_2(x^*), x_k - x^*\big> + 2\nu^2\gamma_k^2\mathbb{E}^{(k)}(\lVert x_k - x^*\rVert_2^2) + \gamma_k^2 M\\
& \overset{\tiny{\circled{5}}}{\leq} a_k - 2\nu\gamma_k(1 - \nu \gamma_k)\mathbb{E}^{(k)}(\lVert x_k - x^*\rVert_2^2) + \gamma_k^2 M\\
& \overset{\tiny{\circled{6}}}{\leq} a_k - \nu\gamma_k\mathbb{E}^{(k)}(\lVert x_k - x^*\rVert_2^2) + \gamma_k^2 M\\,
\end{aligned}
\end{equation}
where
\begin{itemize}
  \item Inequality \small{\circled{1}} follows from Lemma \ref{cglm1}.
  \item Inequalities \small{\circled{2}} and \small{\circled{3}} use the inequality $\lVert a + b \rVert_2^2 \leq 2\lVert a \rVert_2^2 + 2\lVert b \rVert_2^2$.
  \item Inequality \small{\circled{4}} uses the fact that the $\nabla l(x)$ is uniformly bounded, i.e., there must be a constant $M$ such that $4\big(\mathbb{E}^{(k + 1)}\lVert \nabla l^{[i_k]}(x_k)\rVert_2^2) + \lVert \nabla l(x^*)\rVert_2^2\big) \leq M$.
  \item Inequality \small{\circled{5}} uses the strong convexity of function $f_2(x)$.
  \item Inequality \small{\circled{6}} uses the fact that $2\nu\gamma_k(1 - \nu \gamma_k) \geq \nu\gamma_k$ since $\gamma_k \leq \gamma_{k_0} \leq \frac{1}{2\nu}$ for $k \geq k_0$.
\end{itemize}
Summing Eq. \textbf{(\ref{thm1eq2})} from $k_0$ to $k$, we have
\begin{equation}\label{thm1eq33}
\begin{aligned}
\sum_{k = k_0}^{k} \nu \gamma_k\mathbb{E}^{(k)}(\lVert x_k - x^*\rVert_2^2)
& \leq a_{k_0} + M\sum_{k = k_0}^{k}\gamma_k^2 - a_{k + 1} \\
& \leq a_{k_0} + M\sum_{k = k_0}^{k}\gamma_k^2.
\end{aligned}
\end{equation}
Letting $k \rightarrow +\infty$, we obtain
\begin{equation}\label{thm1eq3}
\sum_{k = k_0}^{+ \infty} \nu \gamma_k\mathbb{E}^{(k)}(\lVert x_k - x^*\rVert_2^2)
\leq a_{k_0} + M\sum_{k = k_0}^{+ \infty}\gamma_k^2.
\end{equation}

Since $\alpha \in (0.5,1]$, we have $\sum_{k = k_0}^{\infty} \gamma_k = \infty$ and $\sum_{k = k_0}^{\infty} \gamma_k^2 < \infty$. Thus, if $\underset{k \to \infty}{\underline{\lim}}\mathbb{E}^{(k)}(\lVert x_k - x^*\rVert_2^2 \not \to 0$, then there must be a constant $c > 0$ such that $\mathbb{E}^{(k)}(\lVert x_{k} - x^*\rVert_2^2) > c, \forall k \geq k_0$; we then have
\begin{equation}\label{thm1eq4}
\infty = c\nu\sum_{k = k_0}^{\infty}\gamma_{k}
< \sum_{k = k_0}^{\infty}\gamma_{k}\nu\mathbb{E}^{(k)}(\lVert x_{k} - x^*\rVert_2^2) < a_{k_0} + M\sum_{k = k_0}^{\infty}\gamma_{k}^2 < \infty.
\end{equation}
Contradiction. Thus, $\underset{k \to \infty}{\underline{\lim}}\mathbb{E}^{(k)}(\lVert x_k - x^*\rVert_2^2 \to 0$.
\qquad
\end{proof}

\begin{remark}
It can be seen that Theorem \textbf{\ref{thm1}} holds if we add a $l_2$ term on logistic loss or hinge loss.
\end{remark}
Theorem \textbf{\ref{thm1}} gives the convergence of SPDFP, but does not provide the convergence rate. The following theorem gives the convergence rate of SPDFP.

\begin{theorem}\label{thm2}
We assume that $f_2(x) = \frac{1}{n}\sum_{i = 1}^{n} \phi_i(x) + \frac{\nu}{2} \lVert x \rVert_2^2$ for some $\nu > 0$ and $\nabla \phi_i(x)$ is uniformly bounded (related to $M$). Furthermore, the matrix $B$ has full row rank. Given $c > 0, \alpha \in (0,1],0 < \lambda < \frac{1}{\rho_{max}(BB^T)}$ in Algorithm 1, and letting $k_0 > 0$ be an integer large enough such that $\gamma_{k_0} \leq \min\{ \frac{1}{2\nu} and \frac{\lambda \rho_{min}(BB^T)}{\nu}\}$, then the following estimate holds:
\begin{equation}\label{thm2eqness}
\begin{aligned}
a_{k + 1} \leq (1 - \nu\gamma_k)a_k + \gamma_k^2M, \qquad \qquad \forall k \geq k_0.
\end{aligned}
\end{equation}
\end{theorem}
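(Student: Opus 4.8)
The plan is to run essentially the same argument as in the proof of Theorem \ref{thm1}, but to treat the term $\lVert v_k - v^*\rVert_2^2$ more carefully instead of simply discarding it. Starting from inequality (\ref{cglmeq11}) of Lemma \ref{cglm1}, the new ingredient is a coefficient-matching step. Since $B$ has full row rank we have $\rho_{\min}(BB^T) > 0$, so the threshold $\gamma_{k_0} \leq \lambda\rho_{\min}(BB^T)/\nu$ determines a genuine (finite) $k_0$, and for every $k \geq k_0$
\begin{equation*}
\nu\gamma_k \leq \nu\gamma_{k_0} \leq \lambda\rho_{\min}(BB^T), \qquad \text{hence} \qquad 1 - \lambda\rho_{\min}(BB^T) \leq 1 - \nu\gamma_k .
\end{equation*}
Multiplying this inequality by the nonnegative quantity $\frac{\gamma_k^2}{\lambda}\mathbb{E}^{(k)}(\lVert v_k - v^*\rVert_2^2)$ and splitting $\mathbb{E}^{(k)}(\lVert x_k - x^*\rVert_2^2) = (1-\nu\gamma_k)\mathbb{E}^{(k)}(\lVert x_k - x^*\rVert_2^2) + \nu\gamma_k\mathbb{E}^{(k)}(\lVert x_k - x^*\rVert_2^2)$, the first two terms on the right-hand side of (\ref{cglmeq11}) are bounded by $(1-\nu\gamma_k)a_k + \nu\gamma_k\mathbb{E}^{(k)}(\lVert x_k - x^*\rVert_2^2)$.

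Next I would dispose of the remaining two terms of (\ref{cglmeq11}) exactly as in inequalities \circled{2}--\circled{6} of the proof of Theorem \ref{thm1}: write $\nabla f_2^{[i_k]} = \nabla l^{[i_k]} + \nu\,\mathrm{Id}$, apply $\lVert a+b\rVert_2^2 \leq 2\lVert a\rVert_2^2 + 2\lVert b\rVert_2^2$ twice, and use the uniform bound on $\nabla l^{[i_k]}$ (absorbed into $M$) to get
\begin{equation*}
\gamma_k^2\,\mathbb{E}^{(k+1)}\big(\lVert \nabla f_2^{[i_k]}(x_k) - \nabla f_2(x^*)\rVert_2^2\big) \leq 2\nu^2\gamma_k^2\,\mathbb{E}^{(k)}(\lVert x_k - x^*\rVert_2^2) + \gamma_k^2 M ,
\end{equation*}
while $\nu$-strong convexity of $f_2$ (Lemma \ref{lm2}) gives $-2\gamma_k\,\mathbb{E}^{(k)}\langle \nabla f_2(x_k) - \nabla f_2(x^*), x_k - x^*\rangle \leq -2\nu\gamma_k\,\mathbb{E}^{(k)}(\lVert x_k - x^*\rVert_2^2)$. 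Collecting all contributions, the net coefficient of $\mathbb{E}^{(k)}(\lVert x_k - x^*\rVert_2^2)$ is $\nu\gamma_k - 2\nu\gamma_k + 2\nu^2\gamma_k^2 = -\nu\gamma_k(1 - 2\nu\gamma_k) \leq 0$, the last step because $\gamma_k \leq \gamma_{k_0} \leq \frac{1}{2\nu}$. Discarding this nonpositive term leaves $a_{k+1} \leq (1-\nu\gamma_k)a_k + \gamma_k^2 M$ for all $k \geq k_0$, which is (\ref{thm2eqness}).

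The only genuinely new point — and the one I expect to be the main obstacle to spot — is the coefficient-matching in the first paragraph: the natural reflex is to drop the $v$-term as in Theorem \ref{thm1}, but then one loses the factor $(1-\nu\gamma_k)$ in front of that part of $a_k$ and only recovers a statement of the form $a_{k+1}\le a_k - (\cdots)$. The remedy is precisely the extra hypothesis $\gamma_{k_0} \leq \lambda\rho_{\min}(BB^T)/\nu$, which forces $1 - \lambda\rho_{\min}(BB^T) \leq 1 - \nu\gamma_k$; this is also why full row rank of $B$ is assumed, since otherwise $\rho_{\min}(BB^T) = 0$ and no such $k_0$ exists. Everything else is the bookkeeping already carried out in Theorem \ref{thm1}. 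Once (\ref{thm2eqness}) is established, the intended downstream use is to invoke Lemma \ref{lm4} with $s_k = a_k$ and $\tau = M$ to read off the $\mathcal{O}(k^{-\alpha})$ rate of Remark \ref{rk1}, but that is a separate corollary rather than part of this statement.
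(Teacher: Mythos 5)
Your argument is correct and is essentially the paper's own proof: both hinge on the observation that $\gamma_k \leq \gamma_{k_0} \leq \lambda\rho_{\min}(BB^T)/\nu$ forces $1-\lambda\rho_{\min}(BB^T) \leq 1-\nu\gamma_k$, so the $v$-term in \textbf{(\ref{cglmeq11})} can be absorbed into $(1-\nu\gamma_k)a_k$ rather than discarded, with the remaining terms handled exactly as in steps \circled{2}--\circled{6} of Theorem \textbf{\ref{thm1}}. Your coefficient bookkeeping ($\nu\gamma_k - 2\nu\gamma_k + 2\nu^2\gamma_k^2 \leq 0$ when $\gamma_k \leq \tfrac{1}{2\nu}$) is just a slight reordering of the paper's computation and yields the same bound.
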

\begin{proof}
From the proof of \textbf{(\ref{thm1eq2})}, we have
for $k > k_0$
\begin{equation}\label{thm2eq2}
\begin{aligned}
a_{k + 1} &\leq \mathbb{E}^{(k)}\big(\lVert x_k - x^* \rVert_2^2\big) - \nu\gamma_k\mathbb{E}^{(k)}(\lVert x_k - x^*\rVert_2^2) + \gamma_k^2 M
+ \frac{\gamma_{k}^2}{\lambda}(1 - \lambda \rho_{\min}(B B^T)) \mathbb{E}^{(k)}( \lVert v_k - v^* \rVert_2^2 )  \\
& = (1 - \nu\gamma_k)\mathbb{E}^{(k)}(\lVert x_k - x^*\rVert_2^2)
+ \frac{\gamma_{k}^2}{\lambda}(1 - \lambda \rho_{\min}(B B^T)) \mathbb{E}^{(k)}( \lVert v_k - v^* \rVert_2^2 ) + \gamma_k^2 M \\
& \leq (1 - \nu \gamma_k)\mathbb{E}^{(k)}\Big(\lVert x_k - x^* \rVert_2^2 + \frac{\gamma_k^2}{\lambda} \lVert v_k - v^* \rVert_2^2 \Big) + \gamma_k^2M \\
& = (1 - \nu \gamma_k)a_k + \gamma_k^2M \\.
\end{aligned}
\end{equation}
The first inequality of \textbf{(\ref{thm2eq2})} holds since we replace $a_k$ by $\mathbb{E}^{(k)}\big(\lVert x_k - x^* \rVert_2^2\big) + \frac{\gamma_k^2}{\lambda}(1 - \lambda \rho_{\min}(B B^T))$ $	\mathbb{E}^{(k)}( \lVert v_k - v^* \rVert_2^2 )$ in Eq. \textbf{(\ref{thm1eq2})} and use the fact that $\gamma_{k_0} \leq \frac{1}{2\nu}$. The second inequality follows from the fact that $\gamma_{k} \leq \gamma_{k_0} \leq \frac{\lambda \rho_{min}(BB^T)}{\nu}, \forall k \geq k_0$.
\end{proof}

\begin{corollary}\label{thm1cor}
If we let $\eta_k = \nu \gamma_k$ and $D = \frac{M}{\nu^2}$ and $c_0 = \nu c$, then Eq. \textbf{(\ref{thm2eqness})} becomes
\begin{equation}\label{thm2eqness1}
\begin{aligned}
a_{k + 1} \leq (1 - \eta_k)a_k + \eta_k^2D, \qquad \qquad \forall k \geq k_0.
\end{aligned}
\end{equation}
Using Lemma \textbf{\ref{lm3}} and the fact that $\eta_{k_0} = \frac{\nu c }{k_0^{\alpha}} \leq \frac{1}{2} < 1$ to Eq. \textbf{(\ref{thm2eqness1})}, we obtain
\begin{equation}\label{thm2eqness2}
a_{k + 1} \leq \left\{
\begin{aligned}
& \Big( D c_0^2 \varphi_{1 - 2\alpha}(k) + a_{k_0}\exp \Big( \frac{ c_0 k_0^{1 - \alpha}}{1 - \alpha} \Big)\Big) \exp\Big( \frac{-c_0 (1 - 2^{\alpha - 1})(k + 1)^{1 - \alpha}}{1 - \alpha} \Big) + \frac{D2^{\alpha} c_0 }{(k - 2)^{\alpha}} \\
&\qquad\qquad\qquad\qquad\qquad\qquad\qquad\qquad\qquad\qquad if ~ \alpha \in (0,1). \\
& a_{k_0} \Big(\frac{k_0}{k + 1}\Big)^{c_0}  + \frac{D c_0^2}{(k + 1)^{c_0}}(1 + \frac{1}{k_0})^{c_0} \varphi_{c_0 - 1}(k) \qquad if ~ \alpha = 1
\end{aligned}
\right.
\end{equation}
for $k \geq 2k_0$. This gives the $\mathcal{O}{(k^{-\alpha}})$ convergence rate of SPDFP.\\
\end{corollary}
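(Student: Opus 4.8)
The plan is to reduce the recursion (\ref{thm2eqness}) established in Theorem \ref{thm2} to the abstract one-step inequality treated in Lemma \ref{lm4}, by a change of variables, and then read off the rate. First I would substitute $\gamma_k = \eta_k/\nu$ into $a_{k+1} \leq (1 - \nu\gamma_k)a_k + \gamma_k^2 M$: the contraction factor $1 - \nu\gamma_k$ becomes $1 - \eta_k$, while $\gamma_k^2 M = (\eta_k/\nu)^2 M = \eta_k^2 (M/\nu^2) = \eta_k^2 D$, which yields (\ref{thm2eqness1}) for all $k \geq k_0$ with no further work. Since $\gamma_k = c/k^{\alpha}$, we also get $\eta_k = \nu c/k^{\alpha} = c_0/k^{\alpha}$, so the sequence $(\eta_k)$ is exactly of the form demanded by Lemma \ref{lm4}, with the lemma's constant ``$c$'' equal to $c_0$ and its ``$\tau$'' equal to $D$, and with $s_k = a_k$.

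Next I would verify the remaining hypotheses of Lemma \ref{lm4}. Nonnegativity $0 \leq a_{k+1}$ is immediate since $a_k$ is the expectation of a sum of squared norms; positivity of $c_0 = \nu c$ and $D = M/\nu^2$ is clear; and the one-step bound (\ref{thm2eqness1}) supplies the required recursion from index $k_0$ on. The only point needing a small remark is the threshold index: Lemma \ref{lm4} is phrased with $k_0$ the \emph{smallest} integer with $\eta_{k_0} \leq 1$, whereas here $k_0$ is the (possibly larger) index fixed in Theorem \ref{thm2}; but because $\gamma_{k_0} \leq 1/(2\nu)$ we have $\eta_{k_0} = \nu\gamma_{k_0} \leq 1/2 < 1$, and $(\eta_k)$ is decreasing, so iterating the recursion from our $k_0$ is legitimate and Lemma \ref{lm4} applies with that index. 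For every $k \geq 2k_0$ it then gives precisely the two-branch estimate (\ref{thm2eqness2}) — the first branch when $\alpha \in (0,1)$, the second when $\alpha = 1$.

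Finally, the $\mathcal{O}(k^{-\alpha})$ claim follows by inspecting the right-hand side of (\ref{thm2eqness2}) exactly as in Remark \ref{rk1}: for $\alpha \in (0,1)$ the exponential factor multiplying $\varphi_{1-2\alpha}(k)$ decays faster than any polynomial, so the dominant term is $D 2^{\alpha} c_0/(k-2)^{\alpha} = \mathcal{O}(k^{-\alpha})$; for $\alpha = 1$ the bound reduces to $\mathcal{O}(k^{-c_0}) + \mathcal{O}(\log(k)/k)$, of order $\mathcal{O}(k^{-1})$ up to the logarithmic factor when $c_0 \geq 1$.

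I do not expect any genuine obstacle here: all the analytic content sits in Theorem \ref{thm2} (the one-step descent with the $v$-term absorbed) and in Lemma \ref{lm4} (the decay of the scalar recursion). The corollary is purely bookkeeping — tracking the constants through $\eta_k = \nu\gamma_k$, $D = M/\nu^2$, $c_0 = \nu c$, and checking that the index $k_0$ inherited from Theorem \ref{thm2} satisfies $\eta_{k_0} < 1$ so that Lemma \ref{lm4} is applicable.
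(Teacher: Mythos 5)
Your proposal is correct and follows essentially the same route as the paper: a change of variables $\eta_k=\nu\gamma_k$, $D=M/\nu^2$, $c_0=\nu c$ turning \textbf{(\ref{thm2eqness})} into the scalar recursion of Lemma \textbf{\ref{lm4}}, followed by a direct application of that lemma (the paper's citation of Lemma \textbf{\ref{lm3}} here is a typo for Lemma \textbf{\ref{lm4}}, as you implicitly recognize). Your extra remark reconciling the index $k_0$ from Theorem \textbf{\ref{thm2}} with the threshold index in Lemma \textbf{\ref{lm4}} is a detail the paper glosses over, and it is handled correctly.
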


\subsection{Non-boundness of gradient.}
In the preceding section, we gave the convergence analysis based on the bounded gradient of the first term of $f_2$; however, if it is not true but the following assumption holds, then there are also convergence results of SPDFP.
\begin{assumption}\label{assmpthm3}
There exist $C_1,C_2 > 0$ such that the following inequality holds:
\begin{equation}\label{assump3eq}
\mathbb{E}^{(k + 1)}\big(\lVert \nabla f_2^{[i_k]}(x_k) - \nabla f_2(x_k) \rVert_2^2\big) \leq C_1\mathbb{E}^{(k)}(\lVert x_k \rVert_2^2) + C_2,
\end{equation}
where $x_k$ denotes the $k$th iteration of Algorithm 1.
\end{assumption}

\noindent It can be seen that the left-hand side of Eq. \textbf{(\ref{assump3eq})} is
\begin{equation*}
\begin{aligned}
\mathbb{E}^{(k + 1)}\big(\lVert \nabla f_2^{[i_k]}(x_k) - \nabla f_2(x_k) \rVert_2^2\big)
& = \mathbb{E}^{(k + 1)}\big(\lVert \nabla f_2^{[i_k]}(x_k) \rVert_2^2 \big) - \mathbb{E}^{(k)}\big(\lVert \nabla f_2(x_k) \rVert_2^2\big) \\
& \leq \mathbb{E}^{(k + 1)}\big(\lVert \nabla f_2^{[i_k]}(x_k) \rVert_2^2 \big).
\end{aligned}
\end{equation*}
Thus, to satisfy inequality \textbf{(\ref{assump3eq})}, we only need to verify
\begin{equation}\label{ammp3rkeq}
\mathbb{E}^{(k + 1)}\big(\lVert \nabla f_2^{[i_k]}(x_k) \rVert_2^2 \big)
 \leq C_1\mathbb{E}^{(k)}(\lVert x_k \rVert_2^2) + C_2.
\end{equation}
If $f_2(x)$ is square loss i.e., $f_2(x) = \frac{1}{n}\lVert Ax - b \rVert_2^2 = \frac{1}{n}\sum_{i = 1}^{n}(a_i^Tx - b(i))^2$ [here, $a_i^T$ is the $i$th row of matrix $A$ and $b(i)$ is the $i$th component of vector $b$], then by the definition of $\nabla f_2^{[i_k]}(x_k)$ we have
\begin{equation}
\begin{aligned}
\mathbb{E}^{(k + 1)}\big(\lVert \nabla f_2^{[i_k]}(x_k) \rVert_2^2 \big)
& = \sum_{j = 1}^{n/p}\frac{p}{n}\frac{1}{p^2}\mathbb{E}^{(k)}\big(\lVert A_j^T(A_jx_k - b_j) \rVert_2^2\big) \\
& \leq \frac{1}{np}\sum_{j = 1}^{n/p}\rho_{\max}(A_j A_j^T)\mathbb{E}^{(k)}\big(\lVert(A_jx_k - b_j) \rVert_2^2\big) \\
& \leq  \frac{L_p}{np}\sum_{j = 1}^{n/p}\mathbb{E}^{(k)}\big(\lVert A_jx_k\rVert_2 + \lVert b_j \rVert_2 \big)^2 \\
& \leq  \frac{L_p}{np}\sum_{j = 1}^{n/p} 2\Big( \rho_{\max}(A_j^T A_j)\mathbb{E}^{(k)}\big(\lVert x_k\rVert_2^2\big) + \lVert b_j \rVert_2^2\Big) \\
& \leq \frac{2L_p^2}{p^2}\mathbb{E}^{(k)}\big(\lVert(x_k\rVert_2^2\big) + \frac{2L_p}{np}\lVert b \rVert_2^2,
\end{aligned}
\end{equation}
where
\begin{itemize}
  \item $A_j$ is the sub-matrix of $A$ drawn from $(j - 1)*p + 1$ to $j*p + 1$ rows and $b_j$ is a subvector of $b$ drawn from same index from $b$.
  \item $L_p = \max \{ \rho_{\max}(A_1 A_1^T), \cdots, \rho_{\max}(A_{n/p} A_{n/p}^T) \}$.
\end{itemize}
Thus, we can choose $C_1 = \frac{2L_p^2}{p^2}$
and $C_2 = \frac{2L_p}{np}\lVert b \rVert_2^2$ in Assumption \textbf{\ref{assmpthm3}}.
We now give the convergence and convergence rate of SPDFP based on Assumption \textbf{\ref{assmpthm3}}.
\begin{theorem}\label{thm3}
Assuming $f_2(x)$ is $\nu$-strongly convex, $\frac{1}{\beta}$ is the continuous gradient and Assumption \textbf{\ref{assmpthm3}} holds. If we choose $\alpha \in (0.5 ,1],\gamma_k = \frac{c}{k^{\alpha}}$ and $0 < \lambda \leq \frac{1}{\rho(BB^T)}$, then
\begin{equation}\label{thm2eq1}
\underset{k \to \infty}{\underline{\lim}}\mathbb{E}^{(k)}\Big(\lVert x_k - x^* \rVert_2^2 \Big) \to 0.
\end{equation}
\end{theorem}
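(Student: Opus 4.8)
The plan is to run the argument of Theorem~\ref{thm1} almost verbatim, replacing the uniform gradient bound by the state-dependent variance control of Assumption~\ref{assmpthm3} together with the co-coercivity of $\nabla f_2$. Starting from the basic estimate \eqref{cglmeq11} of Lemma~\ref{cglm1}, I would first get rid of the dual contribution: since $0 < \lambda \leq 1/\rho_{\max}(BB^T)$ we have $0 \leq 1 - \lambda\rho_{\min}(BB^T) \leq 1$, hence
\begin{equation*}
\mathbb{E}^{(k)}\big(\lVert x_k - x^* \rVert_2^2\big) + \frac{\gamma_k^2}{\lambda}\big(1 - \lambda\rho_{\min}(BB^T)\big)\mathbb{E}^{(k)}\big(\lVert v_k - v^* \rVert_2^2\big) \leq a_k .
\end{equation*}
Then I would split the last term of \eqref{cglmeq11} by $\lVert a + b \rVert_2^2 \leq 2\lVert a \rVert_2^2 + 2\lVert b \rVert_2^2$ with $a = \nabla f_2^{[i_k]}(x_k) - \nabla f_2(x_k)$ and $b = \nabla f_2(x_k) - \nabla f_2(x^*)$, so that
\begin{equation*}
\gamma_k^2\,\mathbb{E}^{(k+1)}\big(\lVert \nabla f_2^{[i_k]}(x_k) - \nabla f_2(x^*) \rVert_2^2\big) \leq 2\gamma_k^2\,\mathbb{E}^{(k+1)}\big(\lVert \nabla f_2^{[i_k]}(x_k) - \nabla f_2(x_k) \rVert_2^2\big) + 2\gamma_k^2\,\mathbb{E}^{(k)}\big(\lVert \nabla f_2(x_k) - \nabla f_2(x^*) \rVert_2^2\big).
\end{equation*}

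The first piece is handled by Assumption~\ref{assmpthm3} followed by $\lVert x_k \rVert_2^2 \leq 2\lVert x_k - x^* \rVert_2^2 + 2\lVert x^* \rVert_2^2$, which gives a bound $4C_1\gamma_k^2\,\mathbb{E}^{(k)}(\lVert x_k - x^* \rVert_2^2) + \gamma_k^2(4C_1\lVert x^* \rVert_2^2 + 2C_2)$. The second piece is handled by the co-coercivity of Lemma~\ref{lm2}, $\lVert \nabla f_2(x_k) - \nabla f_2(x^*) \rVert_2^2 \leq \tfrac{1}{\beta}\langle \nabla f_2(x_k) - \nabla f_2(x^*), x_k - x^*\rangle$; combined with the inner-product term already present in \eqref{cglmeq11} this yields
\begin{equation*}
-2\gamma_k\,\mathbb{E}^{(k)}\big\langle \nabla f_2(x_k) - \nabla f_2(x^*), x_k - x^*\big\rangle + \frac{2\gamma_k^2}{\beta}\,\mathbb{E}^{(k)}\big\langle \nabla f_2(x_k) - \nabla f_2(x^*), x_k - x^*\big\rangle = -2\gamma_k\Big(1 - \frac{\gamma_k}{\beta}\Big)\mathbb{E}^{(k)}\big\langle \nabla f_2(x_k) - \nabla f_2(x^*), x_k - x^*\big\rangle .
\end{equation*}
Choosing $k_0$ so large that $\gamma_{k_0} \leq \beta/2$ and using monotonicity of the gradient of a convex function, this is bounded by $-\gamma_k\,\mathbb{E}^{(k)}\langle \nabla f_2(x_k) - \nabla f_2(x^*), x_k - x^*\rangle$, which by the $\nu$-strong convexity part of Lemma~\ref{lm2} is at most $-\nu\gamma_k\,\mathbb{E}^{(k)}(\lVert x_k - x^* \rVert_2^2)$.

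Collecting everything, for $k \geq k_0$ I expect
\begin{equation*}
a_{k+1} \leq a_k - \nu\gamma_k\,\mathbb{E}^{(k)}\big(\lVert x_k - x^* \rVert_2^2\big) + 4C_1\gamma_k^2\,\mathbb{E}^{(k)}\big(\lVert x_k - x^* \rVert_2^2\big) + \gamma_k^2 M', \qquad M' := 4C_1\lVert x^* \rVert_2^2 + 2C_2 .
\end{equation*}
Enlarging $k_0$ once more so that $4C_1\gamma_{k_0} \leq \nu/2$, the two $\mathbb{E}^{(k)}(\lVert x_k - x^* \rVert_2^2)$ terms merge into $-\tfrac{\nu}{2}\gamma_k\,\mathbb{E}^{(k)}(\lVert x_k - x^* \rVert_2^2)$, and from here the proof is identical to \eqref{thm1eq33}--\eqref{thm1eq4}: summing from $k_0$ to $K$ gives $\sum_{k=k_0}^{K}\tfrac{\nu}{2}\gamma_k\,\mathbb{E}^{(k)}(\lVert x_k - x^* \rVert_2^2) \leq a_{k_0} + M'\sum_{k\geq k_0}\gamma_k^2 < \infty$, where finiteness of $\sum\gamma_k^2$ uses $\alpha \in (0.5,1]$; since also $\sum_{k\geq k_0}\gamma_k = \infty$, a strictly positive $\underline{\lim}$ of $\mathbb{E}^{(k)}(\lVert x_k - x^* \rVert_2^2)$ would force the left-hand side to diverge, a contradiction, so $\underset{k \to \infty}{\underline{\lim}}\,\mathbb{E}^{(k)}(\lVert x_k - x^* \rVert_2^2) \to 0$.

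The main obstacle --- essentially the only place the argument departs from Theorem~\ref{thm1} --- is the extra term $4C_1\gamma_k^2\,\mathbb{E}^{(k)}(\lVert x_k - x^* \rVert_2^2)$ produced when the state-dependent variance bound \eqref{assump3eq} is recentred at $x^*$: it must be absorbed into the negative drift $-\nu\gamma_k\,\mathbb{E}^{(k)}(\lVert x_k - x^* \rVert_2^2)$, which is possible only once $\gamma_k$ is small enough, so the genuinely new work is the bookkeeping of the constants defining $k_0$, which must satisfy $\gamma_{k_0} \leq \beta/2$ and $4C_1\gamma_{k_0} \leq \nu/2$ simultaneously.
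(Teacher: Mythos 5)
Your proposal is correct and follows essentially the same route as the paper: the same splitting of $\nabla f_2^{[i_k]}(x_k)-\nabla f_2(x^*)$ into a variance part handled by Assumption~\ref{assmpthm3} (recentred at $x^*$) and a deterministic part handled by co-coercivity, followed by absorption into the strong-convexity drift and the summation/contradiction argument of Theorem~\ref{thm1}. The only difference is cosmetic bookkeeping: the paper folds your two conditions on $k_0$ into the single threshold $\gamma_{k_0}\leq \beta\nu/\bigl(2(\nu+2C_1\beta)\bigr)$, obtaining the drift coefficient $-\nu\gamma_k$ rather than your $-\tfrac{\nu}{2}\gamma_k$, which is immaterial for the conclusion.
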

\begin{proof}
As before, letting $k_0$ be large enough such that $\gamma_{k_0} < \frac{\beta\nu}{2(\nu + 2C_1\beta)}$, then by Eq. \textbf{(\ref{cglmeq11})} we have, for $k > k_0$,
\begin{equation}\label{thm3eq2}
\begin{aligned}
a_{k + 1}
& \overset{\tiny{\circled{1}}}{\leq} a_k - 2\gamma_k \mathbb{E}^{(k)}\big< \nabla f_2(x_k) - \nabla f_2(x^*), x_k - x^*\big> + \gamma_k^2 \mathbb{E}^{(k + 1)}(\lVert \nabla f_2^{[i_k]}(x_k) - \nabla f_2(x^*) \rVert_2^2) \\
& = a_k - 2\gamma_k \mathbb{E}^{(k)}\big< \nabla f_2(x_k) - \nabla f_2(x^*), x_k - x^*\big> \\
& \qquad \qquad \qquad + \gamma_k^2 \mathbb{E}^{(k + 1)}(\lVert \nabla f_2^{[i_k]}(x_k) - \nabla f_2(x_k) + \nabla f_2(x_k) - \nabla f_2(x^*) \rVert_2^2)\\
& \overset{\tiny{\circled{2}}}{\leq} a_k - 2\gamma_k \mathbb{E}^{(k)}\big< \nabla f_2(x_k) - \nabla f_2(x^*), x_k - x^*\big> + 2\gamma_k^2 \mathbb{E}^{(k + 1)}(\lVert \nabla f_2^{[i_k]}(x_k) - \nabla f_2(x_k) \rVert_2^2) \\
& \qquad \qquad+ 2\gamma_k^2 \mathbb{E}^{(k)}(\lVert \nabla f_2(x_k) - \nabla f_2(x^*) \rVert_2^2) \\
& \overset{\tiny{\circled{3}}}{\leq} a_k - 2\gamma_k(1 - \frac{\gamma_k}{\beta}) \mathbb{E}^{(k)}\big< \nabla f_2(x_k) - \nabla f_2(x^*), x_k - x^*\big> + 2\gamma_k^2\big(C_1\mathbb{E}^{(k)}(\lVert x_k \rVert_2^2) + C_2\big) \\
& \overset{\tiny{\circled{4}}}{\leq} a_k - 2\gamma_k(1 - \frac{\gamma_k}{\beta})\nu  \mathbb{E}^{(k)}(\lVert x_k - x^* \rVert_2^2) + 2\gamma_k^2C_1\mathbb{E}^{(k)}\big(\lVert x_k - x^* + x^* \rVert_2^2\big) + 2\gamma_k^2C_2,
\end{aligned}
\end{equation}
\begin{equation*}
\begin{aligned}
& \overset{\tiny{\circled{5}}}{\leq} a_k - 2\gamma_k\big(1 - (\frac{1}{\beta} + \frac{2C_1}{\nu})\gamma_k\big)\nu  \mathbb{E}^{(k)}(\lVert x_k - x^* \rVert_2^2) + \gamma_k^2\big(4C_1\lVert x^* \rVert_2^2 + 2C_2\big)\\
& = a_k - 2\gamma_k\big(1 - \frac{\nu + 2C_1\beta}{\beta\nu}\gamma_k\big)\nu  \mathbb{E}^{(k)}(\lVert x_k - x^* \rVert_2^2) + \gamma_k^2\big(4C_1\lVert x^* \rVert_2^2 + 2C_2\big)\\
& \overset{\tiny{\circled{6}}}{\leq} a_k - \gamma_k\nu  \mathbb{E}^{(k)}(\lVert x_k - x^* \rVert_2^2) + \gamma_k^2\big(4C_1\lVert x^* \rVert_2^2 + 2C_2\big),
\end{aligned}
\end{equation*}
where
\begin{itemize}
  \item Inequality \small{\circled{1}} follows from Lemma \ref{cglm1}.
  \item Inequality \small{\circled{2}} and \small{\circled{5}} use the inequality $\lVert a + b\rVert_2^2 \leq 2\lVert a \rVert_2^2 + 2\lVert b \rVert_2^2$.
  \item Inequality \small{\circled{3}} uses Assumption \ref{assmpthm3} and $\frac{1}{\beta}$ is the continuous gradient of $f_2(x)$.
  \item Inequality \small{\circled{4}} uses the $\nu$-strong convexity of function $f_2(x)$ and the fact that $$\gamma_k \leq \gamma_{k_0} \leq \frac{\beta\nu}{2(\nu + 2C_1\beta)} \leq \beta.$$
  \item Inequality \small{\circled{6}} uses the fact that $2\gamma_k\big(1 - (\frac{\nu + 2C_1\beta}{\beta\nu})\gamma_k\big)\nu \geq \nu \gamma_k $ since $\gamma_k \leq \gamma_{k_0} \leq \frac{\beta\nu}{2(\nu + 2C_1\beta)},\forall k \geq k_0$.
\end{itemize}
Similar to Theorem \textbf{\ref{thm1}}, we obtain the result by contradiction. \qquad
\end{proof}
For the convergence rate, we have the following theorem.
\begin{theorem}\label{thm4}
Assuming $f_2(x)$ is $\nu$-strongly convex $\frac{1}{\beta}$ smooth, Assumption \textbf{\ref{assmpthm3}} holds. Furthermore, the matrix $B$ has full row rank. Given $c > 0, \alpha \in (0,1],0 < \lambda < \frac{1}{\rho_{max}(BB^T)}$, and letting $k_0 > 0 $ be an integer large enough such that $\gamma_{k_0} \leq \min\{ \frac{\beta\nu}{2(\nu + 2C_1\beta)}, \frac{\lambda \rho_{min}(BB^T)}{\nu}\}$, then the following estimate holds:
\begin{equation}\label{thm4eqness}
\begin{aligned}
a_{k + 1} \leq (1 - \nu\gamma_k)a_k + \gamma_k^2M_1, \qquad \qquad \forall k \geq k_0,
\end{aligned}
\end{equation}
where $M_1 = 4C_1\lVert x^* \rVert_2^2 + 2C_2$.
\end{theorem}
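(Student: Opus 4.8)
The plan is to follow the template of the proof of Theorem~\ref{thm2}, recycling the inequality chain already built in the proof of Theorem~\ref{thm3} but keeping the dual term that was discarded there. Recall that in (\ref{thm3eq2}) the step~\circled{1} bounded the quantity $\mathbb{E}^{(k)}(\lVert x_k-x^*\rVert_2^2)+\frac{\gamma_k^2}{\lambda}(1-\lambda\rho_{\min}(BB^T))\mathbb{E}^{(k)}(\lVert v_k-v^*\rVert_2^2)$ simply by $a_k$. Instead, starting from (\ref{cglmeq11}), I would apply the manipulations \circled{2}--\circled{6} of Theorem~\ref{thm3} only to the last two summands $-2\gamma_k\mathbb{E}^{(k)}\langle\nabla f_2(x_k)-\nabla f_2(x^*),x_k-x^*\rangle+\gamma_k^2\mathbb{E}^{(k+1)}(\lVert\nabla f_2^{[i_k]}(x_k)-\nabla f_2(x^*)\rVert_2^2)$: split $\nabla f_2^{[i_k]}(x_k)-\nabla f_2(x^*)$ through $\nabla f_2(x_k)$, invoke Assumption~\ref{assmpthm3} and the $\frac1\beta$-co-coercivity of Lemma~\ref{lm2}, use $\nu$-strong convexity, and absorb the cross term $2\gamma_k^2C_1\lVert x^*\rVert_2^2$ using $\gamma_k\le\gamma_{k_0}\le\frac{\beta\nu}{2(\nu+2C_1\beta)}$. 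This gives, for every $k\ge k_0$,
\begin{equation*}
a_{k+1}\le \mathbb{E}^{(k)}(\lVert x_k-x^*\rVert_2^2)-\nu\gamma_k\,\mathbb{E}^{(k)}(\lVert x_k-x^*\rVert_2^2)+\frac{\gamma_k^2}{\lambda}(1-\lambda\rho_{\min}(BB^T))\,\mathbb{E}^{(k)}(\lVert v_k-v^*\rVert_2^2)+\gamma_k^2M_1,
\end{equation*}
with $M_1=4C_1\lVert x^*\rVert_2^2+2C_2$.

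Next I would group the two primal terms into $(1-\nu\gamma_k)\mathbb{E}^{(k)}(\lVert x_k-x^*\rVert_2^2)$ and handle the dual term with the second restriction on $\gamma_{k_0}$. Because $B$ has full row rank we have $\rho_{\min}(BB^T)>0$, so $\gamma_{k_0}\le\frac{\lambda\rho_{\min}(BB^T)}{\nu}$ is a genuine positive bound; for $k\ge k_0$ it yields $\nu\gamma_k\le\lambda\rho_{\min}(BB^T)$, hence $1-\lambda\rho_{\min}(BB^T)\le 1-\nu\gamma_k$, and therefore
\begin{equation*}
\frac{\gamma_k^2}{\lambda}(1-\lambda\rho_{\min}(BB^T))\,\mathbb{E}^{(k)}(\lVert v_k-v^*\rVert_2^2)\le (1-\nu\gamma_k)\,\frac{\gamma_k^2}{\lambda}\,\mathbb{E}^{(k)}(\lVert v_k-v^*\rVert_2^2).
\end{equation*}
Adding this to the primal part gives $a_{k+1}\le(1-\nu\gamma_k)a_k+\gamma_k^2M_1$, which is exactly (\ref{thm4eqness}). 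I would also note that $\gamma_{k_0}\le\frac{\beta\nu}{2(\nu+2C_1\beta)}\le\frac\beta2\le\frac1{2\nu}$ --- using $\beta\le1/\nu$, which holds for any $\nu$-strongly convex $\frac1\beta$-smooth function --- so $1-\nu\gamma_k\ge\frac12>0$ and the recursion is a genuine contraction.

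I do not expect a real obstacle here: every individual estimate has already been carried out, the bookkeeping of the dual block and the final regrouping into $a_k$ in Theorem~\ref{thm2}, and the Assumption~\ref{assmpthm3}/smoothness/strong-convexity chain in Theorem~\ref{thm3}. The only thing needing care is to resist collapsing the $v_k$ block into $a_k$ prematurely (as was done in step~\circled{1} of the proof of Theorem~\ref{thm3}) and to observe that the two hypotheses on $\gamma_{k_0}$ play distinct roles: the bound $\frac{\beta\nu}{2(\nu+2C_1\beta)}$ is what absorbs the $\lVert x^*\rVert_2^2$ cross term into the constant $M_1$, while the bound $\frac{\lambda\rho_{\min}(BB^T)}{\nu}$ is what converts the factor $1-\lambda\rho_{\min}(BB^T)$ in front of the dual block into the common contraction factor $1-\nu\gamma_k$.
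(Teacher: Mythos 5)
Your proposal is correct and follows essentially the same route as the paper: keep the dual block with its factor $1-\lambda\rho_{\min}(BB^T)$ rather than collapsing it into $a_k$ as in step \circled{1} of Theorem \ref{thm3}, run the Assumption \ref{assmpthm3}/smoothness/strong-convexity chain on the gradient terms, and then use $\gamma_k\le\gamma_{k_0}\le\frac{\lambda\rho_{\min}(BB^T)}{\nu}$ to turn that factor into $1-\nu\gamma_k$ and regroup into $(1-\nu\gamma_k)a_k+\gamma_k^2M_1$. The only addition beyond the paper's argument is your (correct) side remark that $\beta\le 1/\nu$ keeps the contraction factor at least $\tfrac12$.
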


\begin{proof}
Recalling Eq. \textbf{(\ref{thm3eq2})}, we have, for $k > k_0$,
\begin{equation}\label{thm4eq2}
\begin{aligned}
a_{k + 1} &\leq \mathbb{E}^{(k)}\big(\lVert x_k - x^* \rVert_2^2\big)
+ \frac{\gamma_{k}^2}{\lambda}(1 - \lambda \rho_{min}(BB^T)) \mathbb{E}^{(k)}( \lVert v_k - v^* \rVert_2^2 ) - \nu\gamma_k\mathbb{E}^{(k)}(\lVert x_k - x^*\rVert_2^2)\\
& \qquad + \gamma_k^2\big(4C_1\lVert x^* \rVert_2^2 + 2C_2\big)  \\
& = (1 - \nu\gamma_k)\mathbb{E}^{(k)}(\lVert x_k - x^*\rVert_2^2)
+ \frac{\gamma_{k}^2}{\lambda}(1 - \lambda \rho_{min}(BB^T)) \mathbb{E}^{(k)}( \lVert v_k - v^* \rVert_2^2 ) + \gamma_k^2\big(4C_1\lVert x^* \rVert_2^2 + 2C_2\big)  \\
& \leq (1 - \nu \gamma_k)\mathbb{E}^{(k)}\Big(\lVert x_k - x^* \rVert_2^2 + \frac{\gamma_k^2}{\lambda} \lVert v_k - v^* \rVert_2^2 \Big) + \gamma_k^2\big(4C_1\lVert x^* \rVert_2^2 + 2C_2\big) \\
& = (1 - \nu \gamma_k)a_k + \gamma_k^2\big(4C_1\lVert x^* \rVert_2^2 + 2C_2\big) \\
& = (1 - \nu \gamma_k)a_k + \gamma_k^2M_1.
\end{aligned}
\end{equation}
In the first inequality, we replace $a_k$ by $\mathbb{E}^{(k)}\big(\lVert x_k - x^* \rVert_2^2\big) + \frac{\gamma_k^2}{\lambda}(1 - \lambda \rho_{min}(B B^T)) \mathbb{E}^{(k)}( \lVert v_k - v^* \rVert_2^2 )$ in Eq. \textbf{(\ref{thm3eq2})} and use the fact that $\gamma_{k_0} \leq \frac{\beta\nu}{2(\nu + 2C_1\beta)}$. The second inequality follows from the fact that $\gamma_k \leq \gamma_{k_0} \leq \frac{\lambda \rho_{min}(BB^T)}{\nu}, \forall k \geq k_0$.
\end{proof}

\begin{corollary}\label{thm4cor}
If we let $\eta_k = \nu \gamma_k$ and $D = \frac{4C_1\lVert x^* \rVert_2^2 + 2C_2}{\nu^2},c_0 = \nu c$, then Eq. \textbf{(\ref{thm2eqness})} become
\begin{equation}\label{thm4eqness1}
\begin{aligned}
a_{k + 1} \leq (1 - \eta_k)a_k + \eta_k^2D, \qquad \qquad \forall k \geq k_0.
\end{aligned}
\end{equation}
Letting $K = \max \{ k_0, k_1\}$, where $k_1$ is the smallest integer such that $\eta_{k_1} < 1$, then, by using Lemma \textbf{\ref{lm3}} to Eq. \textbf{(\ref{thm2eqness1})}, we obtain
\begin{equation}\label{thm4eqness2}
a_{k + 1} \leq \left\{
\begin{aligned}
& \Big( D c_0^2 \varphi_{1 - 2\alpha}(k) + a_{K}exp \Big( \frac{c_0K^{1 - \alpha}}{1 - \alpha} \Big)\Big) exp\Big( \frac{-c_0(1 - 2^{\alpha - 1})(k + 1)^{1 - \alpha}}{1 - \alpha} \Big) + \frac{D2^{\alpha}c_0}{(k - 2)^{\alpha}} \\
&\qquad\qquad\qquad\qquad\qquad\qquad\qquad\qquad\qquad\qquad if ~ \alpha \in (0,1). \\
& a_{K} \Big(\frac{K}{k + 1}\Big)^{c_0}  + \frac{D c_0^2}{(k + 1)^{c_0}}(1 + \frac{1}{K})^{c_0} \varphi_{c_0 - 1}(k) \qquad if ~ \alpha = 1
\end{aligned}
\right.
\end{equation}
for $k \geq 2K$.
\end{corollary}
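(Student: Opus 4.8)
The plan is to read (\ref{thm4eqness}) as a concrete instance of the scalar recursion handled by Lemma \ref{lm4}, after the change of variables $\eta_k=\nu\gamma_k$, $D=M_1/\nu^2$, $c_0=\nu c$, and then to quote Lemma \ref{lm4} essentially verbatim. The only point requiring attention is that Theorem \ref{thm4} only furnishes the recursion from the index $k_0$ onward (and with a $k_0$ pinned down by the contraction/positivity bookkeeping rather than by the step size alone), which is precisely why the application must be launched at $K=\max\{k_0,k_1\}$.

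First I would carry out the substitution. Since $\gamma_k=c/k^{\alpha}$, we get $\eta_k=\nu\gamma_k=c_0/k^{\alpha}$, a strictly positive, strictly decreasing sequence of exactly the shape demanded in Lemma \ref{lm4} (its rate constant being $c_0$). Rewriting the additive term of (\ref{thm4eqness}) accordingly, $\gamma_k^2 M_1=(\eta_k/\nu)^2 M_1=\eta_k^2\,(M_1/\nu^2)=\eta_k^2 D$, while $1-\nu\gamma_k=1-\eta_k$, so Theorem \ref{thm4} gives
\[
a_{k+1}\le(1-\eta_k)a_k+\eta_k^2 D,\qquad\forall k\ge k_0,
\]
which is (\ref{thm4eqness1}). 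Moreover $a_k\ge 0$ for every $k$ directly from its definition (\ref{notation1}), so the nonnegativity hypothesis of Lemma \ref{lm4} is in force.

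Next I would deal with the index offset. Because $\eta_k=c_0/k^{\alpha}$ is decreasing and $k_1$ is by definition the least integer with $\eta_{k_1}<1$, every $k\ge K:=\max\{k_0,k_1\}$ satisfies simultaneously (i) $k\ge k_0$, so the recursion (\ref{thm4eqness1}) holds at $k$, and (ii) $\eta_k\le\eta_{k_1}<1$, so the factors $1-\eta_k$ are genuinely contractive from $K$ on. Hence $(a_k)_{k\ge K}$ meets all the assumptions of Lemma \ref{lm4} with $K$ playing the role of that lemma's threshold index, $\tau=D$, and rate constant $c_0$; applying it produces, for $k\ge 2K$, exactly the two-branch estimate (\ref{thm4eqness2}) — the first line being the $\alpha\in(0,1)$ case of Lemma \ref{lm4} and the second line its $\alpha=1$ case, with $a_K$ and $K$ substituted for $s_{k_0}$ and $k_0$. (One may note in passing that $\eta_{k_0}=\nu\gamma_{k_0}\le\lambda\rho_{\min}(BB^T)<\rho_{\min}(BB^T)/\rho_{\max}(BB^T)\le 1$, so in fact $k_0\ge k_1$ and $K=k_0$, but keeping the maximum costs nothing.)

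There is essentially no hard step here: the argument is a bookkeeping substitution followed by a black-box citation of Lemma \ref{lm4}. The one subtlety — and the reason the statement introduces $K$ rather than reusing $k_0$ — is reconciling Lemma \ref{lm4}'s convention (recursion on all of $\mathbb{N}^{*}$, threshold fixed by the step-size sequence) with the present situation, where the recursion is only available for $k\ge k_0$; choosing the larger index $K$ and checking $\eta_K<1$ settles this. If desired, reading the growth orders off the two branches via Remark \ref{rk1} then yields the $\mathcal{O}(k^{-\alpha})$ convergence rate of SPDFP under Assumption \ref{assmpthm3}.
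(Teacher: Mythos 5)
Your proposal is correct and follows essentially the same route as the paper: substitute $\eta_k=\nu\gamma_k$, $D=M_1/\nu^2$, $c_0=\nu c$ into the recursion of Theorem \textbf{\ref{thm4}}, introduce $K=\max\{k_0,k_1\}$ to reconcile the starting index with the step-size threshold, and invoke Lemma \textbf{\ref{lm4}} (the paper's citation of Lemma \textbf{\ref{lm3}} here is a typo for Lemma \textbf{\ref{lm4}}) as a black box. Your added observation that $\eta_{k_0}\le\lambda\rho_{\min}(BB^T)<1$ forces $K=k_0$ is a small but correct refinement the paper does not make.
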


Table \textbf{\ref{cgsum}} summarizes the convergence results of SPDFP based on the following:
\begin{itemize}
	\item Strong convexity of $f_2(x)$ (S.C.)
	\item Bounded gradient (BG)
	\item Assumption \textbf{\ref{assmpthm3}} (Asmp \textbf{\ref{assmpthm3})}
	\item Full row rank of matrix $B$ (FrkB)
	\item Lipchitz continous gradient of $f_2(x)$ (Lip)
	\item Range of $\alpha$ ($\alpha$)
	\item Convergence (Cg)
	\item Convergence rate (Cg rate)
\end{itemize}

\begin{table}[htp]
\caption{Summary of convergence results of SPDFP}
\centering
\begin{tabular}{|c|c|c|c|c|c|c|c|c|c|c|c|c|c|c|c|c|}
\hline
\multicolumn{1}{|c|}{Case}
& \multicolumn{2}{|c|}{S.C.}
& \multicolumn{2}{|c|}{BG}
& \multicolumn{2}{|c|}{Asmp \textbf{\ref{assmpthm3}}}
& \multicolumn{2}{|c|}{Lip}
& \multicolumn{2}{|c|}{FrkB}
& \multicolumn{2}{|c|}{$\alpha$}
& \multicolumn{2}{|c|}{Cg}
& \multicolumn{2}{|c|}{Cg rate} \\
\hline
\multicolumn{1}{|c|}{\textbf{Theorem} \textbf{\ref{thm1}}}
& \multicolumn{2}{|c|}{$\checkmark$}
& \multicolumn{2}{|c|}{$\checkmark$}
& \multicolumn{2}{|c|}{$-$}
& \multicolumn{2}{|c|}{$-$}
& \multicolumn{2}{|c|}{$-$}
& \multicolumn{2}{|c|}{$(0.5,1]$}
& \multicolumn{2}{|c|}{$\checkmark$}
& \multicolumn{2}{|c|}{$-$} \\
\hline
\multicolumn{1}{|c|}{\textbf{Theorem} \textbf{\ref{thm2}}}
& \multicolumn{2}{|c|}{$\checkmark$}
& \multicolumn{2}{|c|}{$\checkmark$}
& \multicolumn{2}{|c|}{$-$}
& \multicolumn{2}{|c|}{$-$}
& \multicolumn{2}{|c|}{$\checkmark$}
& \multicolumn{2}{|c|}{$(0,1]$}
& \multicolumn{2}{|c|}{$\checkmark$}
& \multicolumn{2}{|c|}{$\checkmark$} \\
\hline
\multicolumn{1}{|c|}{\textbf{Theorem} \textbf{\ref{thm3}}}
& \multicolumn{2}{|c|}{$\checkmark$}
& \multicolumn{2}{|c|}{$-$}
& \multicolumn{2}{|c|}{$\checkmark$}
& \multicolumn{2}{|c|}{$\checkmark$}
& \multicolumn{2}{|c|}{$-$}
& \multicolumn{2}{|c|}{$(0.5,1]$}
& \multicolumn{2}{|c|}{$\checkmark$}
& \multicolumn{2}{|c|}{$-$} \\
\hline
\multicolumn{1}{|c|}{\textbf{Theorem} \textbf{\ref{thm4}}}
& \multicolumn{2}{|c|}{$\checkmark$}
& \multicolumn{2}{|c|}{$-$}
& \multicolumn{2}{|c|}{$\checkmark$}
& \multicolumn{2}{|c|}{$\checkmark$}
& \multicolumn{2}{|c|}{$\checkmark$}
& \multicolumn{2}{|c|}{$(0,1]$}
& \multicolumn{2}{|c|}{$\checkmark$}
& \multicolumn{2}{|c|}{$\checkmark$} \\
\hline
\end{tabular}
\label{cgsum}
\end{table}
\begin{remark}\label{Dis1}
It can be seen from Eq. \textbf{(\ref{thm2eqness2})} and Remark \textbf{\ref{rk1}} that the best convergence rate of SPDFP is $\mathcal{O}({1/k})$ (when $\alpha = 1,c > 1$).
However, the number $k_0$ in Theorem \textbf{\ref{thm2}} - Theorem \textbf{\ref{thm4}} may be too large if the condition number of matrix $BB^T$ and $f_2(x)$ is bad. When this happens, a larger $\alpha$, which means faster decreasing step size, will diminish the overall performance of the algorithm. Thus, in our second and third numerical examples, we choose $\alpha < 1$, especially slightly larger than $0.5$ to solve some real-world datasets (see Section 5).
\end{remark}
\section{Numerical results.}
\indent In this section, we investigate the numerical performance of SPDFP. First, we synthesized an example called fussed lasso to see the behavior of SPDFP with different step sizes (i.e., different $\alpha$) to confirm the correctness of the convergence results. We then performed experiments on graph guide SVM \cite{STOCADMM}. The comparison between SADMM \cite{STOCADMM} and SPDFP on dataset 20newsgroups\footnote{http://www.cs.nyu.edu/~roweis/data.html} for a multi-class classification task will be given. Finally, we performed experiments on graph guide logistic regression \cite{Kim}. The comparisons between OPG-ADMM \cite{RDAOPGADMM}, SCAS-ADMM \cite{SCASADMM}, PDFP \cite{PDFP}, and SPDFP on datasets \textbf{a9a} and \textbf{covtype} from \textbf{LIBSVM} \cite{CC01a} will be presented.
\subsection{Synthetic example.}
\noindent First, we consider a synthetized example called fussed lasso, i.e.,
\begin{equation}\label{fussed}
\underset{x \in \mathbb{R}^d}{\min}~ \frac{1}{2n}\lVert Ax - b \rVert_2^2 + \nu \lVert Bx \rVert_1,
\end{equation}
where $A \in \mathbb{R}^{n \times d}, x \in \mathbb{R}^d,b \in \mathbb{R}^n$,
and matrix $B \in \mathbb{R}^{(n - 1) \times n}$ is a sparse matrix, the diagonal entry of which is $-1$; the upper diagonal entry is $1$ and all the other entries are zero. The first term is a data-fidelity term and the second term ensures the sparsity of successive differences in $x$.

We synthesize the problem as follows. The entries of $A$ are drawn from standard normal distribution. Its dimension is $n = 10000$ and $d = 200$. For vector $b$, we first draw a vector $x_0$, the entries of which are $1$. We then randomly choose $0.05$ of its entries to be perturbed by noise; after that, the vector $b$ is computed by $b = Ax_0 + \varepsilon$, where $\varepsilon$ is random Gaussian noise.
The ground truth of ({\ref{fussed}}) is then obtained by running PDFP for 3000 iterations for which we observed the convergence.

\begin{figure}[!htbp]
\centering
\includegraphics[width=0.45\columnwidth]{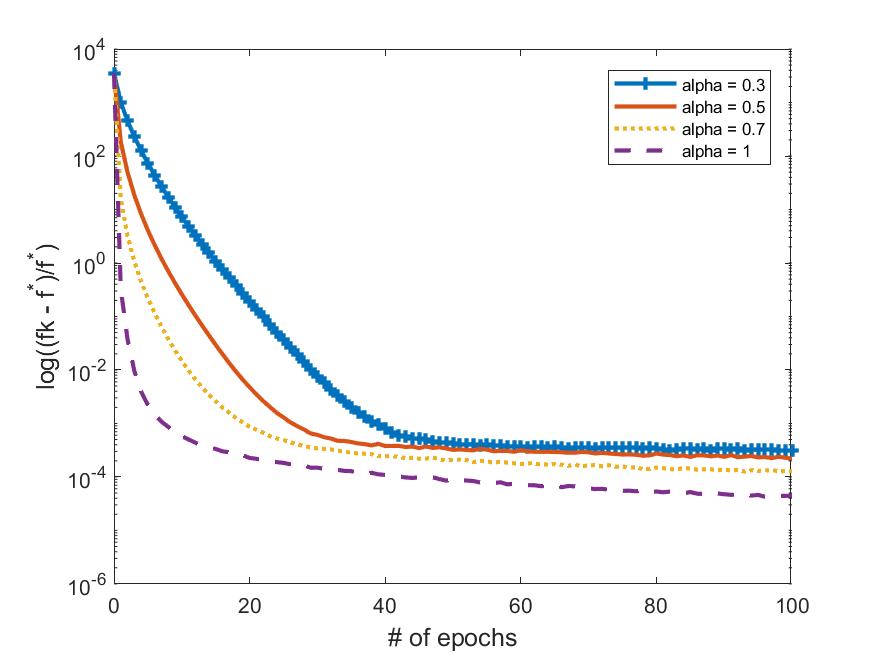}%
\includegraphics[width=0.45\columnwidth]{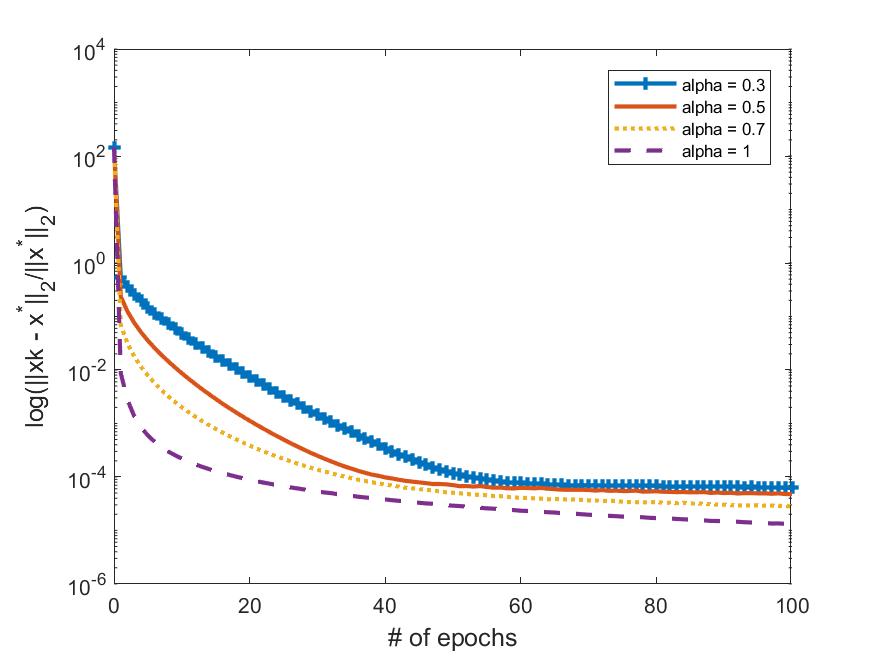}
\caption{Relative error of the function value (left) and of the iterate (right) \textcolor{red}{vs} epoch over 10 independent repetitions  }
\label{fl1}
\end{figure}

\begin{figure}[!htbp]
\centering
\includegraphics[width=0.45\columnwidth]{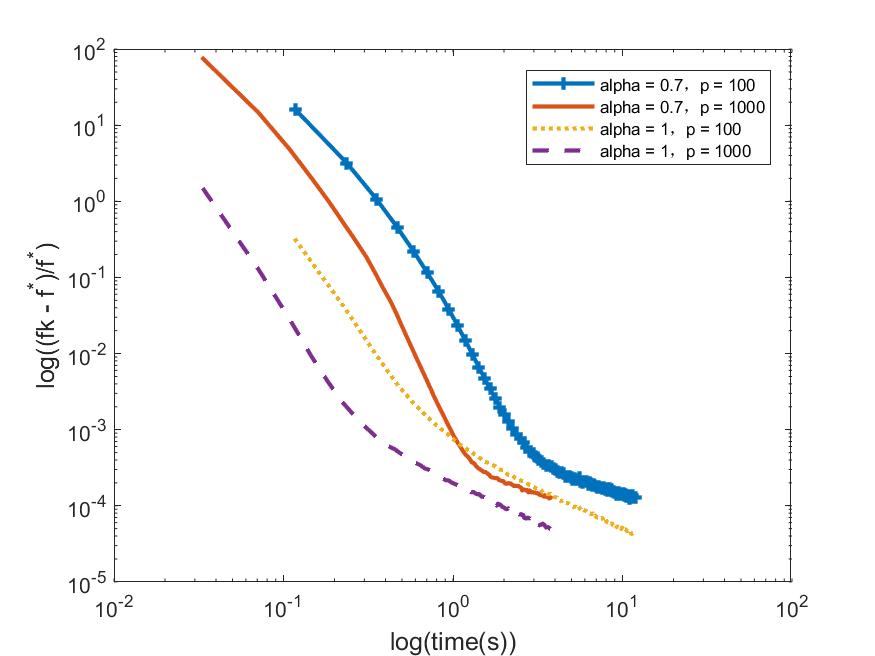}%
\includegraphics[width=0.45\columnwidth]{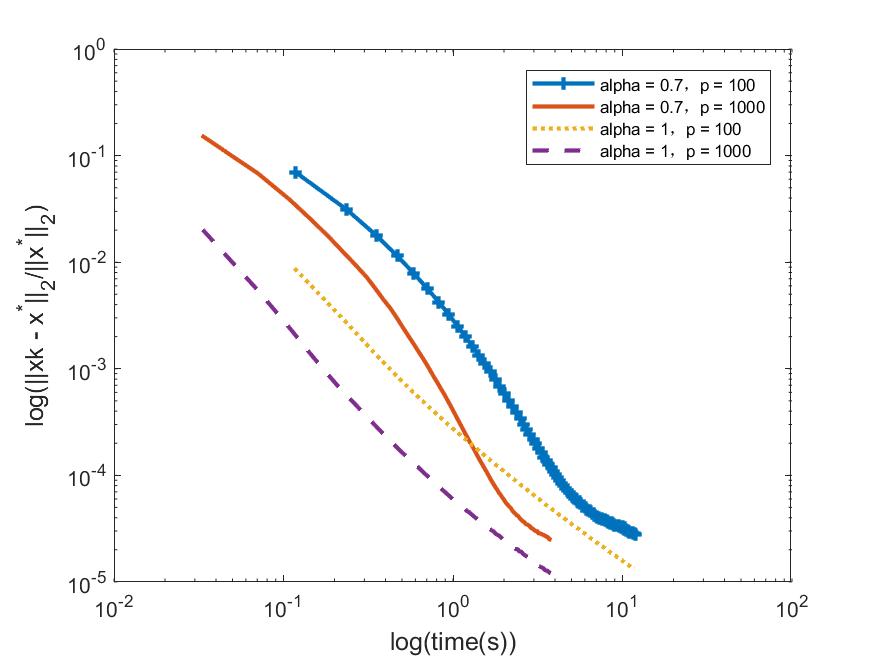}
\caption{Relative error of the function value (left) and of the iterate (right) \textcolor{red}{vs} time with different batch sizes and $\alpha$ over 10 independent repetitions}
\label{fl2}
\end{figure}

Figure \textbf{\ref{fl1}} gives the relative error of function value and error of iteration. In each figure, we consider different step sizes, i.e., $\gamma_k = c/k^{0.3}$,$c/k^{0.5}$, $c/k^{0.7}$, $c/k$ (i.e., $\alpha = 0.3,0.5,0.7,1$). The constant $c$ is given such that it give the best performance for a given $\alpha$. The batch size is $100$ here.  From the results we can see that example with bigger $\alpha$ have better performance both for relative error of the function value and of the iterate, which conforms to our convergence analysis. Figure \textbf{\ref{fl2}} gives the relative error of the function value and of the iterate versus time. We consider different step sizes ($\alpha = 0.7, 1$) and batch sizes ($p = 100,1000$). It can be seen that the examples with larger $\alpha$, i.e., $\alpha = 1$, and batch size, i.e., $p = 1,000$, exhibit the best performance.\\

\subsection{Graph-Guide SVM.}
The second example we considered is a problem called graph guide SVM, which was also considered in \cite{STOCADMM}. The problem is formulated as follows:
\begin{equation}\label{GGSVM1}
\underset{x \in \mathbb{R}^d}{\min}~\frac{1}{n}\sum_{i = 1}^{n}\phi_i(x) + \gamma\lVert x \rVert_2^2 + \nu\lVert Bx \rVert_1,
\end{equation}
where $\phi_i(x) = \max\{ 0,1 - b_is_i^Tx \}, i = 1,\cdots, n$ the hinge loss, and $b_i$ denotes the label of the $i$th sample $s_i$. The matrix $B$ is determined by sparse inverse covariance selection \cite{Banerjee2008model} (or graphical lasso \cite{Friedman2008sparse}).
In \cite{STOCADMM}, the author uses stochastic ADMM to solve this problem, i.e., the problem \textbf{(\ref{GGSVM1})} is reformulated as
\begin{equation}\label{GGSVM}
\begin{aligned}
\underset{x \in \mathbb{R}^d}{\min}~& \frac{1}{n}\sum_{i = 1}^{n}\phi_i(x) + \gamma\lVert x \rVert_2^2 + \nu\lVert y \rVert_1 \\
\qquad s.t.~~&Bx = y.
\end{aligned}
\end{equation}
Denoting $f_2(x) = \frac{1}{n}\sum_{i = 1}^{n}\phi_i(x) + \gamma\lVert x \rVert_2^2 $, $f_1(x) = \lVert x \rVert_1$
and the approximated augmented Lagrangian as follows,
\begin{equation}
\begin{aligned}
\hat{\mathcal{L}}_{\tilde{\beta},k}(x,y,\lambda)
& = f_2(x_k) + \big< \nabla f_2^{[i_k]}(x_k),x - x_k \big> + f_1(y) \\
& - \big< \lambda,Bx - y \big> + \frac{\tilde{\beta}}{2}\lVert Bx - y \rVert_2^2 + \frac{\lVert x - x_k\rVert_2^2}{2\zeta_{k + 1}},
\end{aligned}
\end{equation}
the STOC-ADMM is formulated as
\begin{equation}\label{SADMM1}
\left \{
\begin{aligned}
x_{k + 1} & = \underset{x \in \mathbb{R}^d}{\arg\min}~ \hat{\mathcal{L}}_{\tilde{\beta},k}(x,y_k,\lambda_k) \\
y_{k + 1} & = \underset{x \in \mathbb{R}^d}{\arg\min}~ \hat{\mathcal{L}}_{\tilde{\beta},k}(x_{k + 1},y,\lambda_k) \\
\lambda_{k + 1} & = \lambda_k - \tilde{\beta}(Bx_{k + 1} - y_{k + 1}),
\end{aligned}
\right.
\end{equation}
which is equivalent to
\begin{equation}\label{SADMM2}
\left \{
\begin{aligned}
x_{k + 1} & = \Big( \frac{I}{\zeta_{k + 1}} + \tilde{\beta} B^TB\Big)^{-1}\Big[B^T(\tilde{\beta} y_k + \lambda_k) + \frac{x_k }{\zeta_{k + 1}}- \nabla f_2^{[i_k]}(x_k)\Big]  \\
y_{k + 1} & = \mbox{Prox}_{f_1/\tilde{\beta}}\Big(Bx_{k + 1} - \frac{\lambda_{k}}{\tilde{\beta}} \Big)\\
\lambda_{k + 1} & = \lambda_k - \tilde{\beta}(Bx_{k + 1} - y_{k + 1}).
\end{aligned}
\right.
\end{equation}
The key step of SPDFP (Algorithm 2) is given as follows:
\begin{equation}\label{SPDFP}
\left \{
\begin{aligned}
\gamma_k & = \frac{c}{k^{\alpha}} \\
x_{k + \frac{1}{2}} & = x_k -  \gamma_k \nabla f_2^{[i_k]}(x_k) \\
v_{k + 1} & = (I - \mbox{Prox}_{\frac{\gamma_k}{\lambda}f_1})\Big(Bx_{k + \frac{1}{2}} + \big(\frac{k - 1}{k}\big)^{\alpha}(I - \lambda BB^T)v_k\Big) \\
x_{k + 1} & = x_{k + \frac{1}{2}} - \lambda B^Tv_{k + 1}.
\end{aligned}
\right.
\end{equation}
It can be seen that the difference between SPDFP and STOC-ADMM is that SPDFP does not need to solve the linear equation, which may decrease the complexity in each iteration.

In the experiment, we compared STOC-ADMM and SPDFP on dataset 20newsgroups\footnote{http://www.cs.nyu.edu/~roweis/data.html}, which is composed of binary occurrences of $100$ popular words counted from $16,242$ newsgroup postings on the top level of which are four main categories: computer, recreation, science, and talks. As in \cite{STOCADMM}, we split $80\%$ of the postings for training and $20\%$ for testing and used the one-versus-rest scheme for this multi-class classification task.  The parameters for STOC-ADMM are exactly the same as in \cite{STOCADMM}. For SPDFP, we set $\lambda = 0.02$ and $\gamma_k = 2/k^{0.55}$ to ensure the best performance. All the other settings in the model are the same for both algorithms. We give the plot of testing accuracy averaged over 10 independent repetitions both as a function of epoch and as a function of time. From Figure \textbf{\ref{GGSVMfg}}, it can be seen that the training accuracy of the SPDFP is as good as that of STOC-ADMM, while SPDFP achieves the highest accuracy much quicker.

\begin{figure}[!htbp]
\centering
\includegraphics[width=0.45\columnwidth]{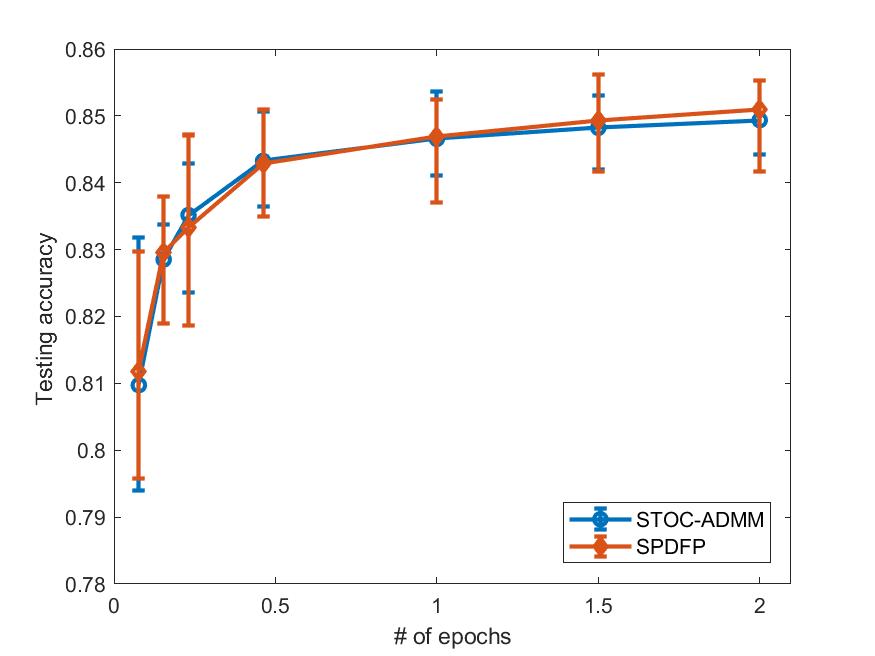}%
\includegraphics[width=0.45\columnwidth]{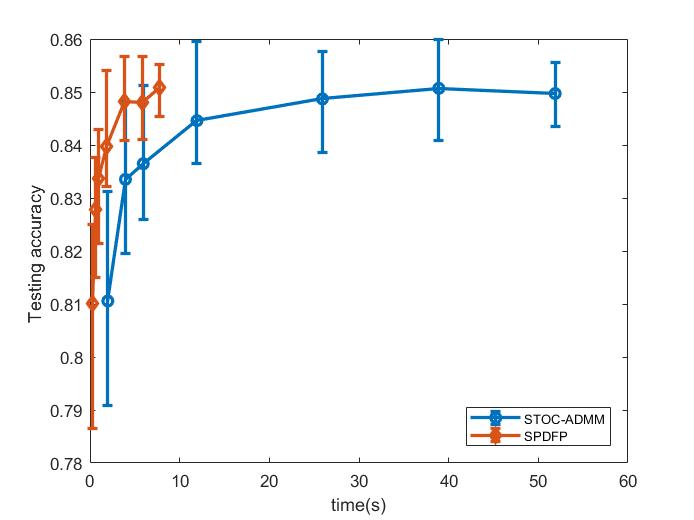}
\caption{Testing accuracy averaged over 10 independent repetitions as a function of epochs (left) and computation time (right) for multi-class classification. Dataset: 20newsgroup}
\label{GGSVMfg}
\end{figure}

\subsection{Graph guide logistic regression.}
The third example that we performed experiments on was the graph guide logistic regression model \cite{Kim}:
\begin{equation}\label{GGLGT1}
\underset{x \in \mathbb{R}^d}{\min}\frac{1}{n}\sum_{i = 1}^{n}\phi_i(x) + \nu_1\lVert Bx \rVert_1,
\end{equation}
where $\phi_i(x) = \log(1 + \exp(-b_is_i^Tx))$. As in \cite{STOCADMM,SVRGADMM}, we used sparse inverse covariance selection \cite{Banerjee2008model} to obtain the graph matrix $G$ and $B = [G;I]$, where $I$ denotes identity. We also added a $l_2$ term to make the problem meet our setting, i.e., we solved the following problem:
\begin{equation}\label{GGLGT2}
\underset{x \in \mathbb{R}^d}{\min}\frac{1}{n}\sum_{i = 1}^{n}\phi_i(x) + \nu_1\lVert x \rVert_2^2 + \nu_2\lVert Bx \rVert_1.
\end{equation}
The following are the details of the example:
\begin{itemize}
  \item We used two datasets, \textbf{a9a} ($54$ features  and $581012$ samples)  and \textbf{covtype} ($123$ features and $32561$ samples), from \textbf{LIBSVM} \cite{CC01a}.
  \item We used half of the set for training and half for testing.
  \item The ground truth of (\textbf{\ref{GGLGT2}}) was derived by running PDFP for $10,000$ iterations.
  \item We compared SPDFP with SCAS-ADMM \cite{SCASADMM}, OPG-ADMM \cite{RDAOPGADMM}, and PDFP \cite{PDFP}.
  \item The mini-batch version of different algorithms has been given; the batch size was 200 for \textbf{a9a} and 1,000 for \textbf{covtype}.
  \item All the algorithms were run 10 times and the average reported.
\end{itemize}%

\begin{figure}[!htbp]
\begin{center}
\subfigure[a9a]{
\centering
\includegraphics[width=0.45\columnwidth]{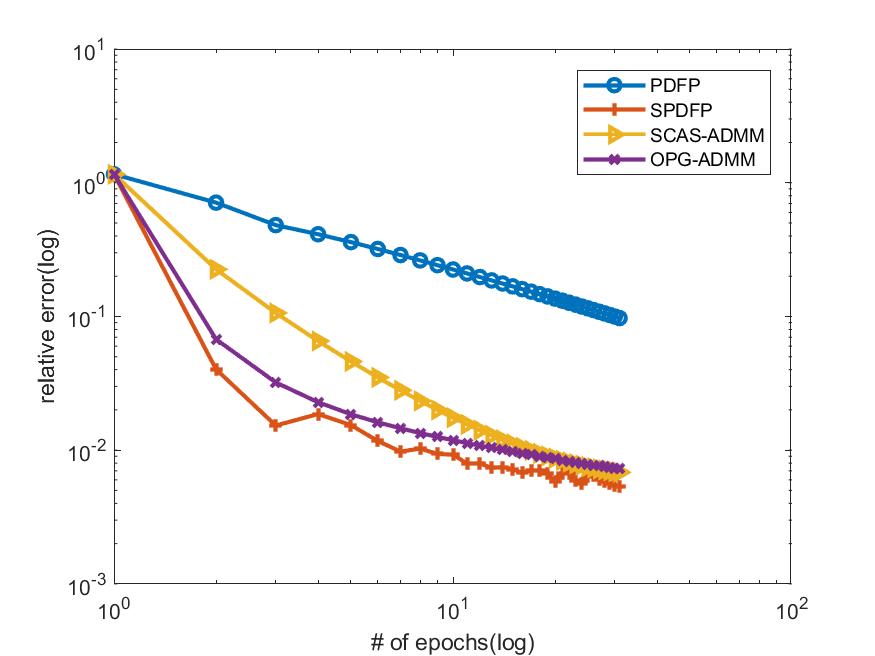}
}
\subfigure[covtype]{
\centering
\includegraphics[width=0.45\columnwidth]{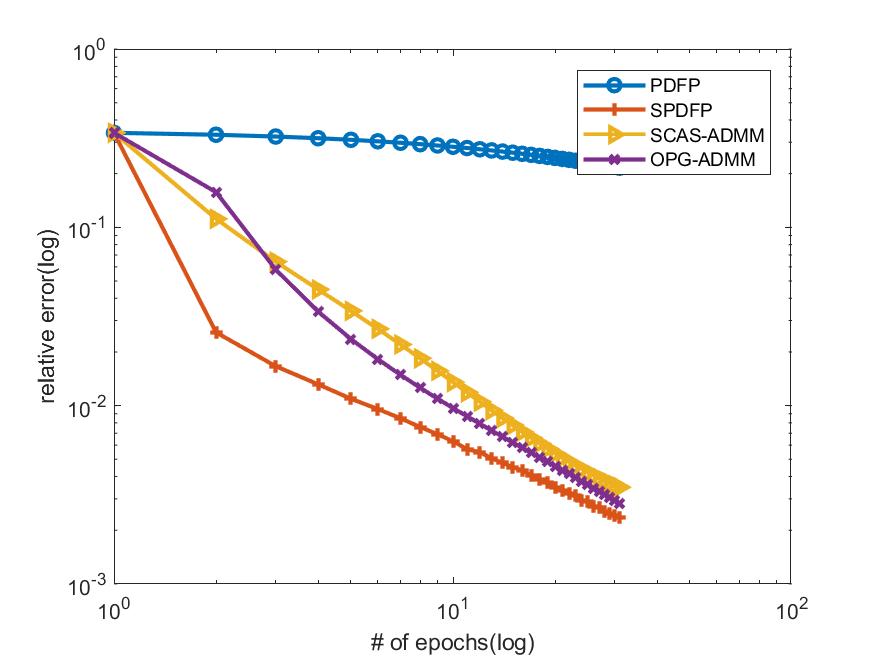}
}
\end{center}
\caption{Averaged relative error of objective value vs epochs over 10 independent repetitions}
\label{GGlgt1}
\end{figure}

\begin{figure}[!htbp]
\begin{center}
\subfigure[a9a]{
\centering
\includegraphics[width=0.45\columnwidth]{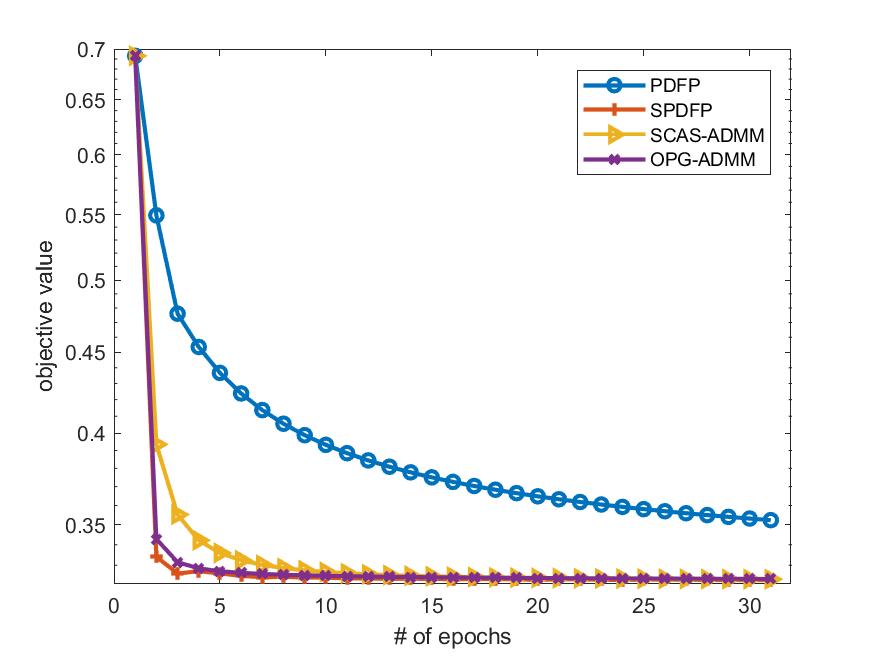}
}
\subfigure[covtype]{
\centering
\includegraphics[width=0.45\columnwidth]{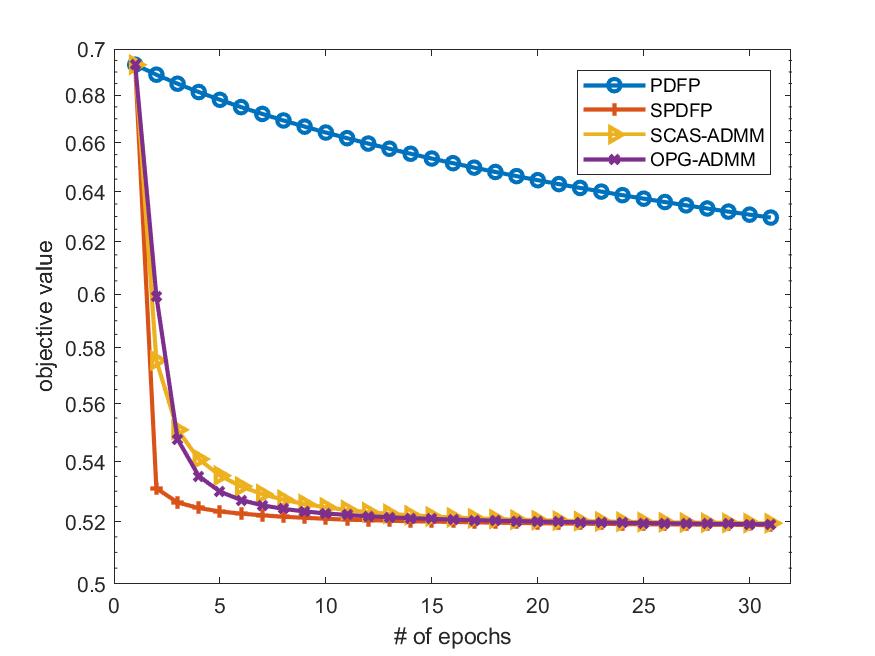}
}
\end{center}
\caption{Averaged objective value vs epochs over 10 independent repetitions}
\label{GGlgt2}
\end{figure}

\begin{figure}[!htbp]
\begin{center}
\subfigure[a9a]{
\centering
\includegraphics[width=0.45\columnwidth]{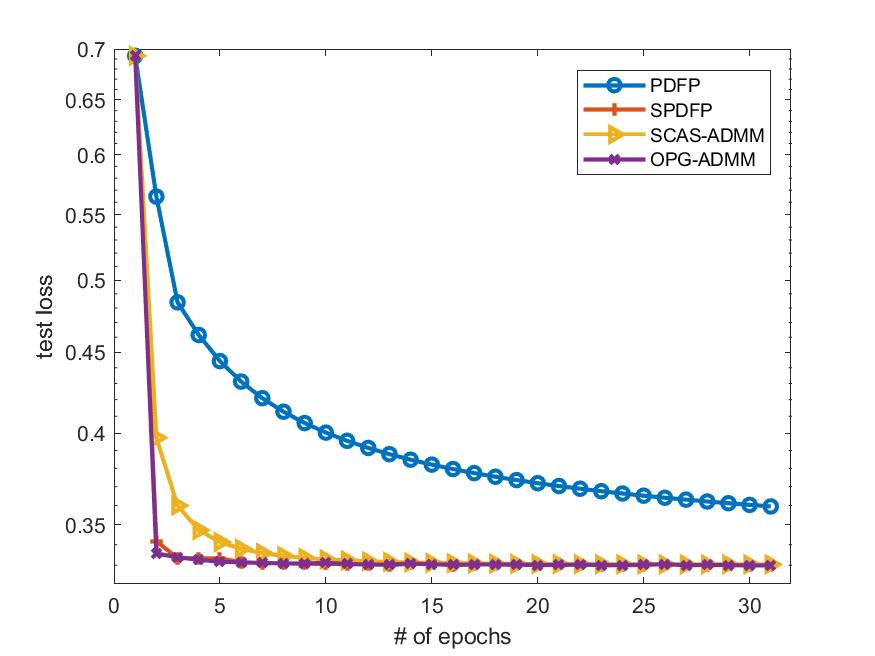}
}
\subfigure[covtype]{
\centering
\includegraphics[width=0.45\columnwidth]{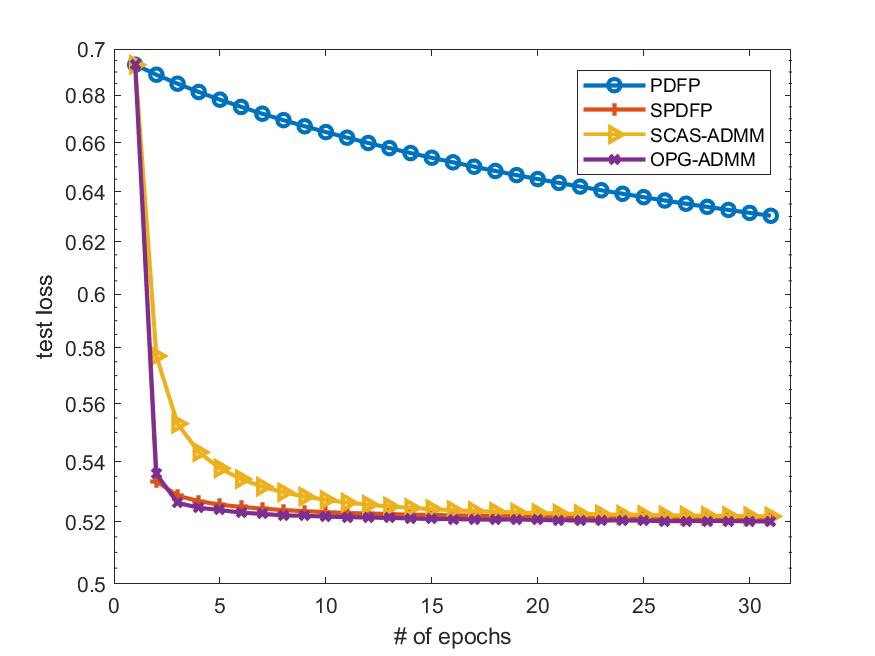}
}
\end{center}
\caption{Averaged testing loss vs epochs over 10 independent repetitions}
\label{GGlgt3}
\end{figure}

\begin{figure}[!htbp]
\begin{center}
\subfigure[a9a]{
\centering
\includegraphics[width=0.45\columnwidth]{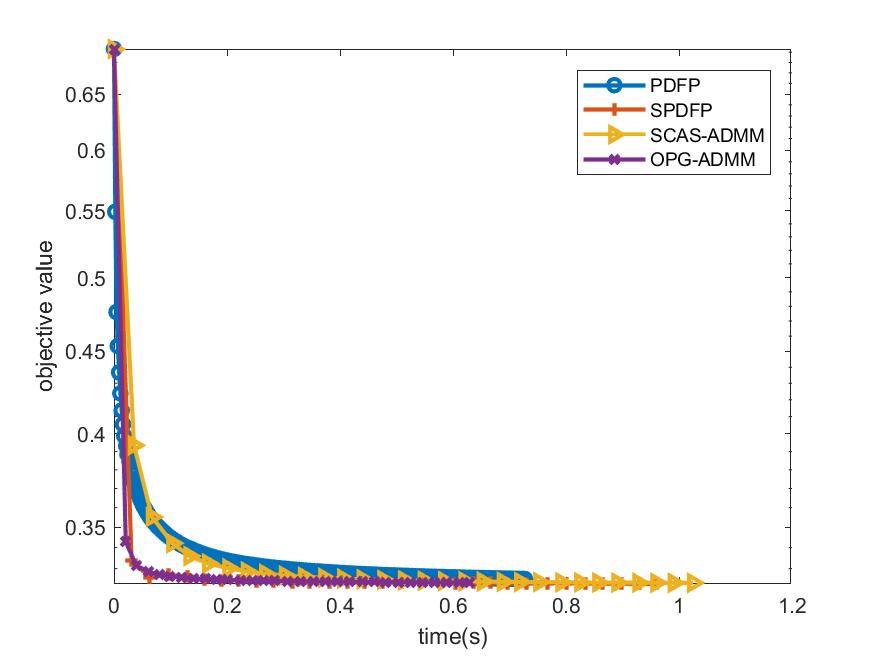}
}
\subfigure[covtype]{
\centering
\includegraphics[width=0.45\columnwidth]{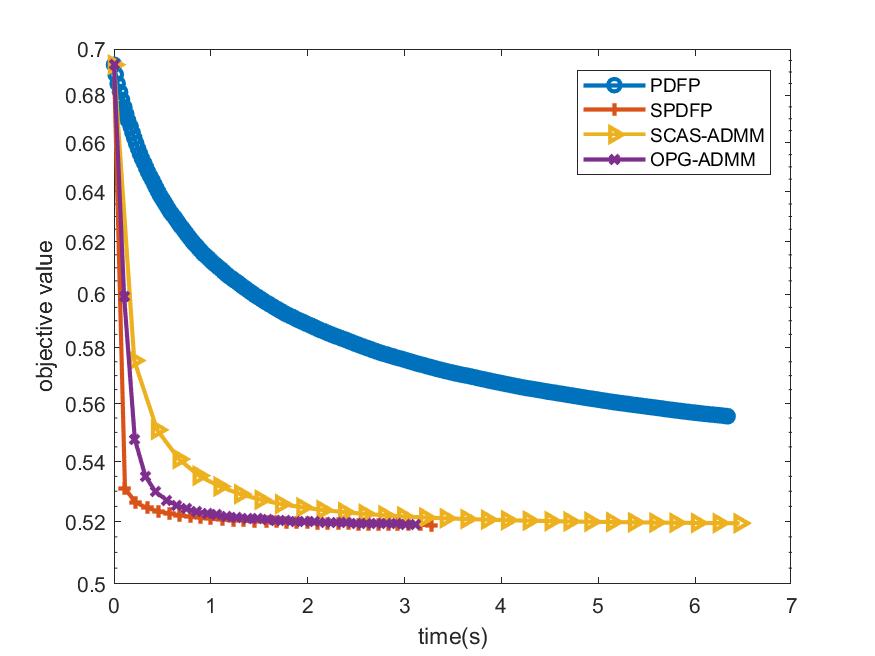}
}
\end{center}
\caption{Averaged objective value vs time over 10 independent repetitions}
\label{GGlgt4}
\end{figure}
Figure \textbf{\ref{GGlgt1}} gives the log-log plot of relative error of the function value and epochs. It can be seen that the performance of SPDFP is as good as that of traditional algorithms, while the performance of PDFP is not good in this large-scale problem. Figures \textbf{\ref{GGlgt2}} and \textbf{\ref{GGlgt3}}, which give the objective value and testing loss of the algorithms, also verify this fact. Figure \textbf{\ref{GGlgt4}} gives the plot of objective value versus time. It can be seen that stochastic algorithms perform much better than deterministic algorithms (we plot $300$ iterations for PDFP in Figure \textbf{\ref{GGlgt4}}, i.e. 300 epochs). \\
We also note that in the convergence analysis, we added an $l_2$ term to make the problem strongly convex. However, if we eliminate the $l_2$ term, i.e., set $\nu_1 = 0$, the SPDFP exhibits similar performance in this example; see Figure \textbf{\ref{GGlgt55} - \ref{GGlgt7}}.

\begin{figure}[htp]
\begin{center}
\subfigure[a9a]{
\centering
\includegraphics[width=0.45\columnwidth]{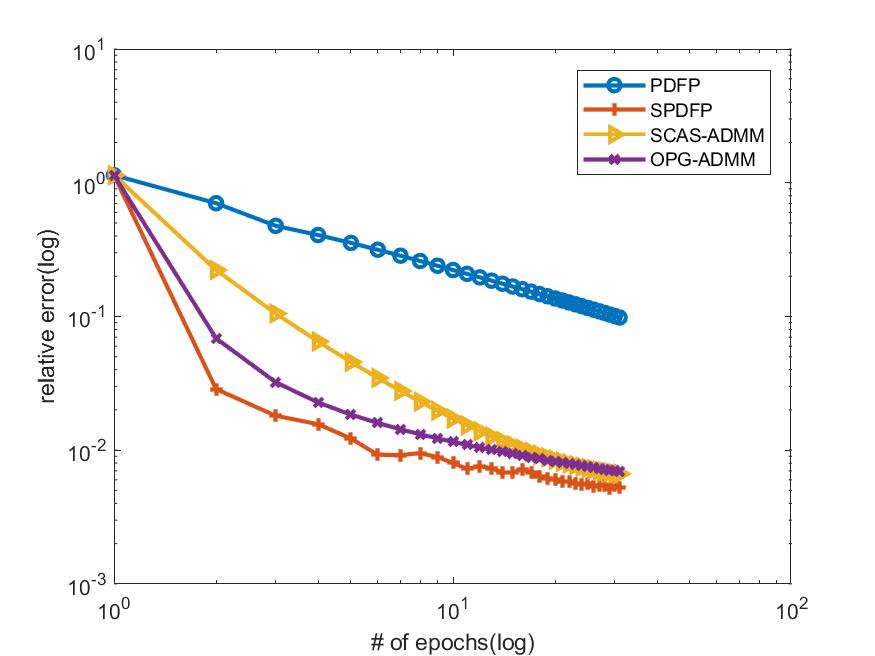}
}
\subfigure[covtype]{
\centering
\includegraphics[width=0.45\columnwidth]{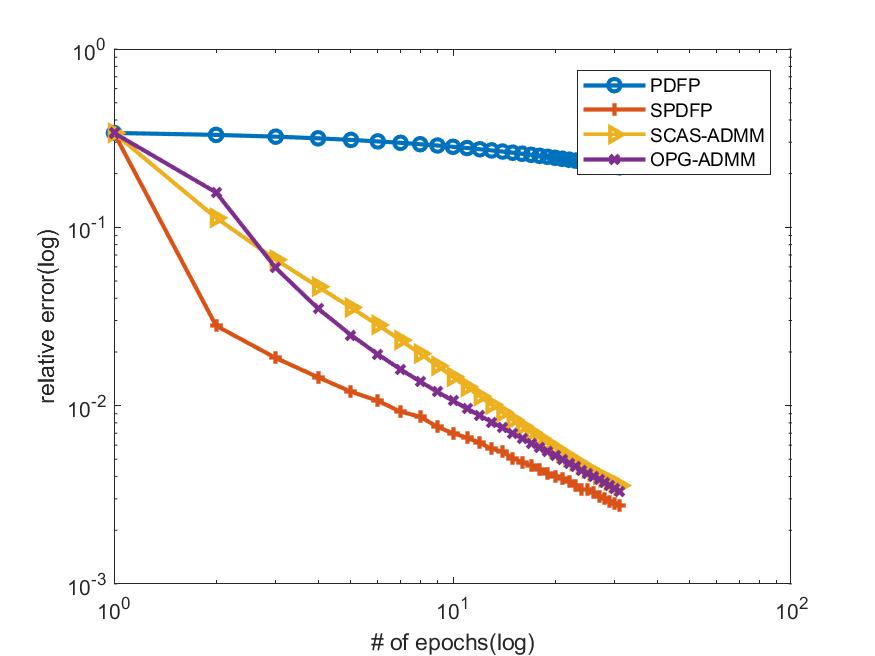}
}
\end{center}
\caption{Averaged relative error of objective value vs epochs over 10 independent repetitions (without $l_2$ term)}
\label{GGlgt55}
\end{figure}

\begin{figure}[htp]
\begin{center}
\subfigure[a9a]{
\centering
\includegraphics[width=0.45\columnwidth]{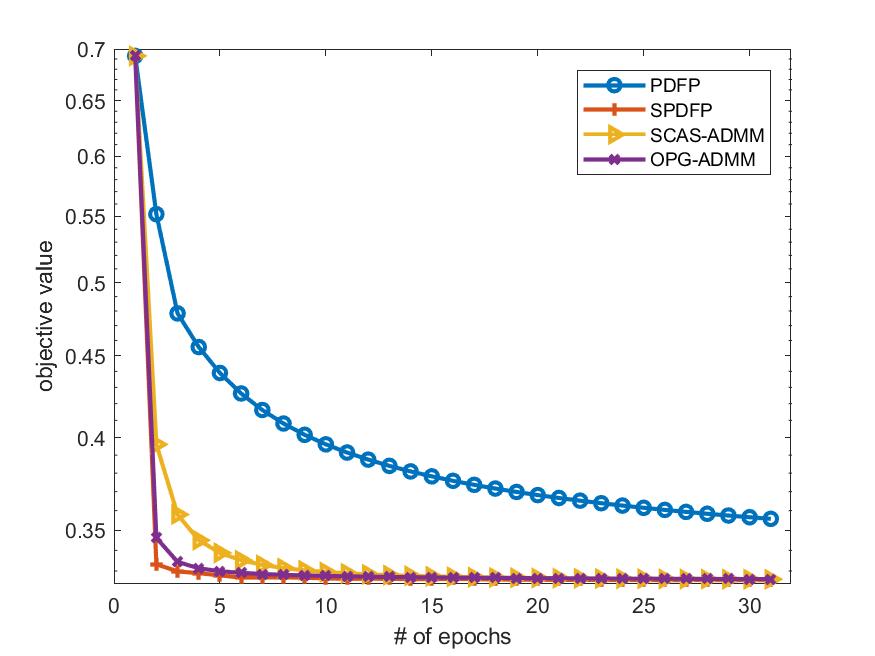}
}
\subfigure[covtype]{
\centering
\includegraphics[width=0.45\columnwidth]{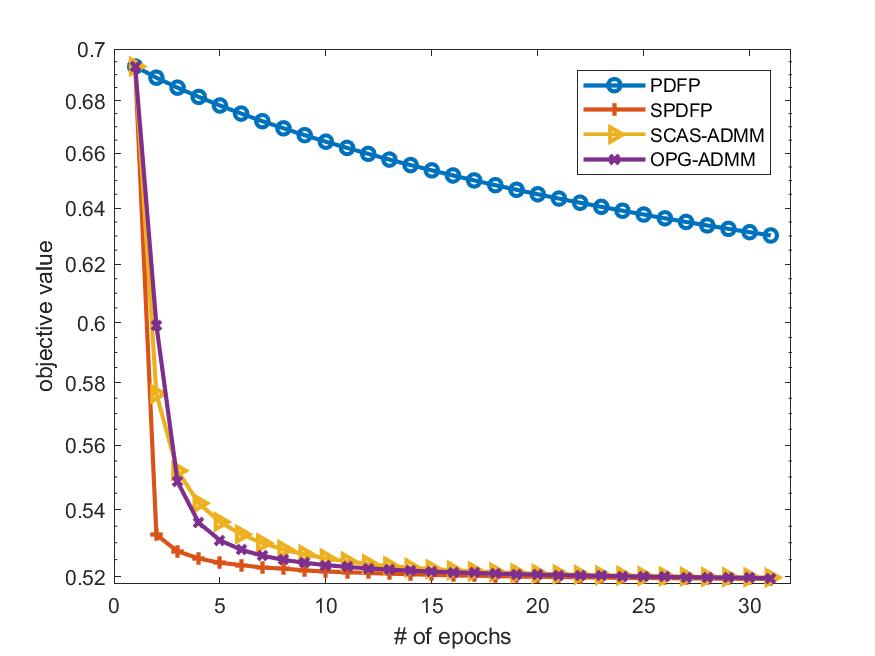}
}
\end{center}
\caption{Averaged objective value vs epochs over 10 independent repetitions (without $l_2$ term)}
\label{GGlgt5}
\end{figure}

\begin{figure}[htp]
\begin{center}
\subfigure[a9a]{
\centering
\includegraphics[width=0.45\columnwidth]{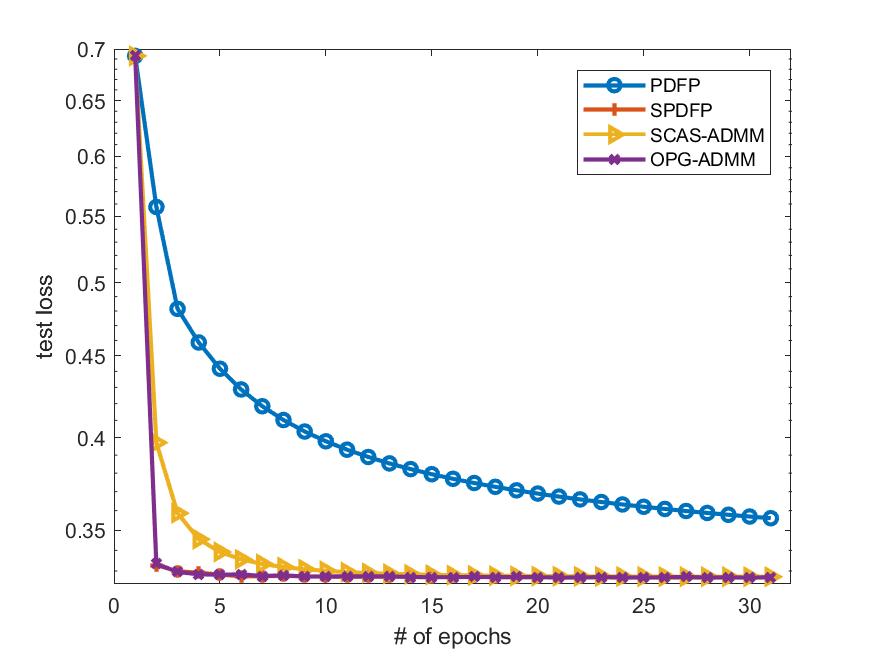}
}
\subfigure[covtype]{
\centering
\includegraphics[width=0.45\columnwidth]{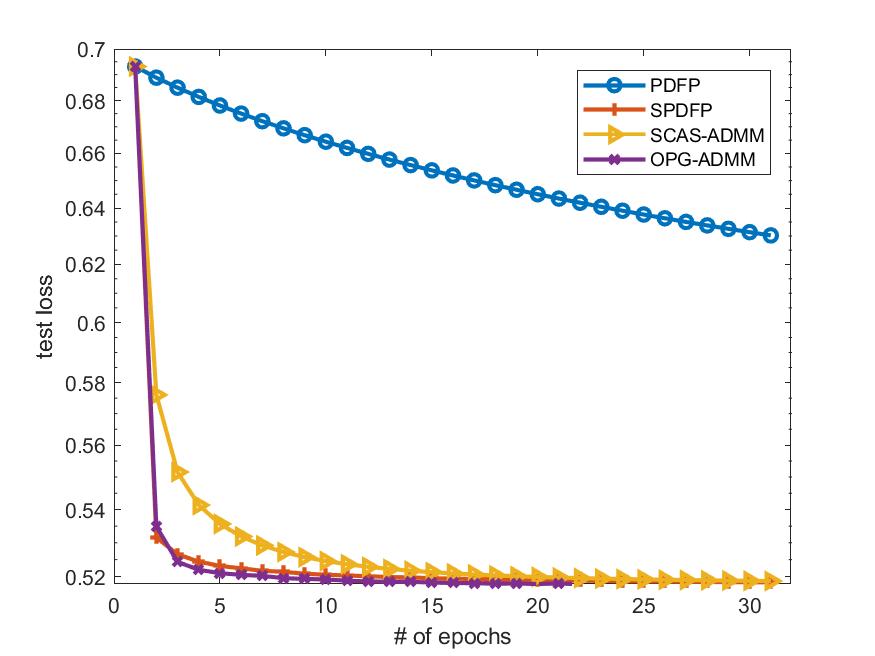}
}
\end{center}
\caption{Averaged testing loss vs epochs over 10 independent repetitions (without $l_2$ term)}
\label{GGlgt6}
\end{figure}

\begin{figure}[htp]
\begin{center}
\subfigure[a9a]{
\centering
\includegraphics[width=0.45\columnwidth]{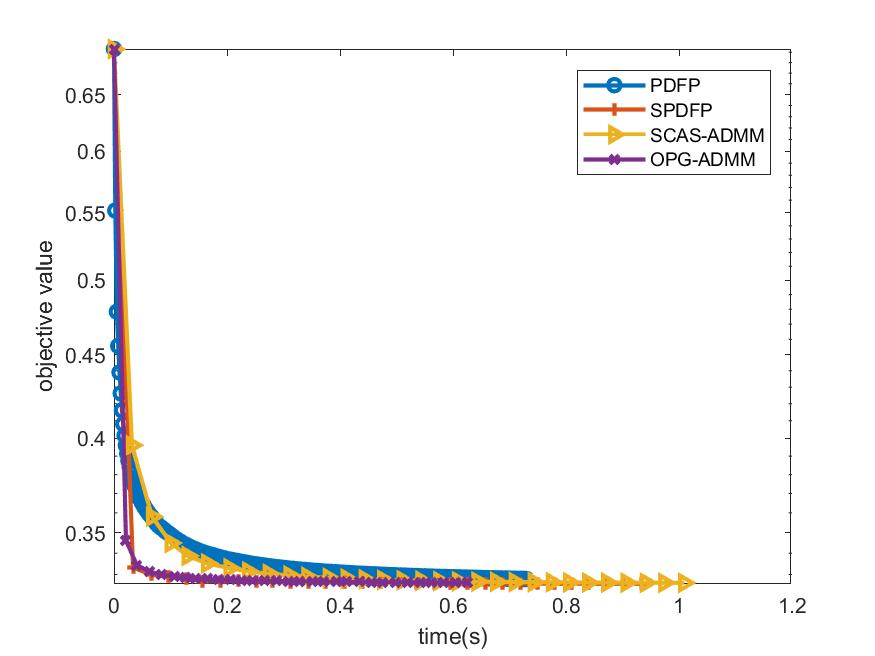}
}
\subfigure[covtype]{
\centering
\includegraphics[width=0.45\columnwidth]{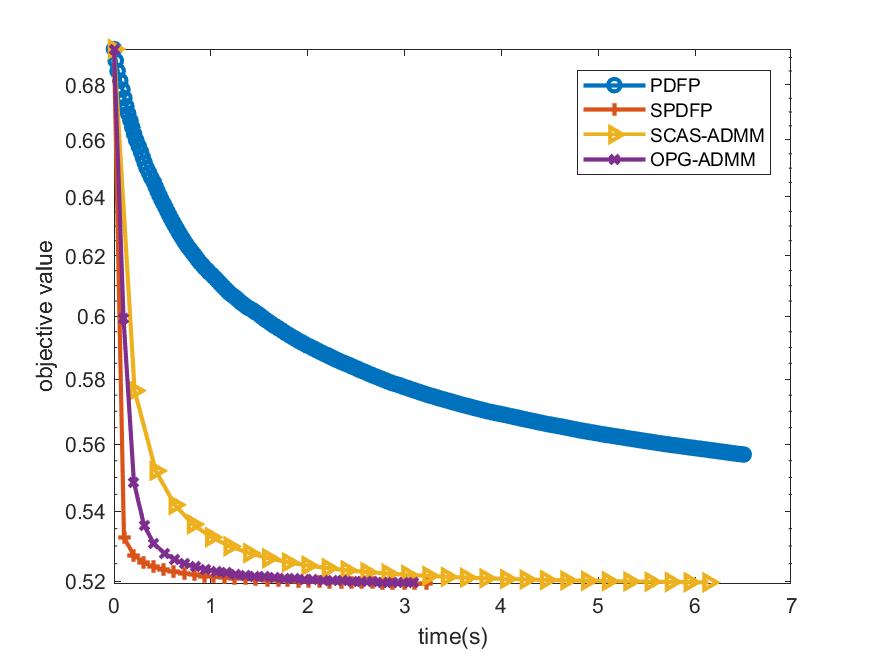}
}
\end{center}
\caption{Averaged objective value vs time over 10 independent repetitions (without $l_2$ term)}
\label{GGlgt7}
\end{figure}

\section{Conclusions.}
In this paper, we propose the stochastic primal dual fixed-point algorithm for solving the problem considered in $\textbf{(\ref{pb1})}$. Based on boundness and non-boundness assumptions of gradient of $f_2(x)$, we give the convergence and convergence rate of SPDFP. Under mild conditions, SPDFP can achieve a $\mathcal{O}(k^{-\alpha})$ rate, where $k$ is iteration number and $\alpha \in (0,1]$. The efficiency of SPDFP is confirmed through examples on fussed lasso, graph guide SVM, and graph guide logistic regression for some real-world datasets.
\section{Appendix.}
Here, we give the details of the aforementioned lemmas. First, we give a lemma that will be used in the proof of Lemma \textbf{\ref{lm3}}.
\begin{lemma}\label{APlm1}
Letting $r > 0$ and $h(x) = r f_0(x/r),x \in \mathbb{R}^d$, then, for all$ ~ y \in \mathbb{R}^d$,
\begin{equation*}
\mathrm{Prox}_h(y) = r \mathrm{Prox}_{r^{-1}f_0}(y/r).
\end{equation*}
\end{lemma}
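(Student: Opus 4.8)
The plan is a direct change of variables in the definition of the proximal operator. By Definition \ref{proxf},
\begin{equation*}
\mathrm{Prox}_h(y) = \underset{x \in \mathbb{R}^d}{\arg\min}\Big\{ r f_0(x/r) + \tfrac{1}{2}\lVert x - y\rVert_2^2 \Big\}.
\end{equation*}
First I would substitute $x = rz$ (equivalently $z = x/r$), which is a bijection of $\mathbb{R}^d$ since $r>0$, so the $\arg\min$ transforms accordingly: $x^\ast$ solves the problem above iff $z^\ast = x^\ast/r$ solves the problem of minimizing $r f_0(z) + \tfrac{1}{2}\lVert rz - y\rVert_2^2$ over $z$.

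Next I would simplify the quadratic term: $\tfrac12\lVert rz - y\rVert_2^2 = \tfrac{r^2}{2}\lVert z - y/r\rVert_2^2$, so the objective in $z$ is $r f_0(z) + \tfrac{r^2}{2}\lVert z - y/r\rVert_2^2$. Dividing by the positive constant $r^2$ does not change the minimizer, giving the equivalent objective $r^{-1} f_0(z) + \tfrac12\lVert z - y/r\rVert_2^2$. By Definition \ref{proxf} again, its unique minimizer is $z^\ast = \mathrm{Prox}_{r^{-1}f_0}(y/r)$, and hence $x^\ast = rz^\ast = r\,\mathrm{Prox}_{r^{-1}f_0}(y/r)$, which is the claim.

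There is essentially no hard step here; the only point requiring a word of care is the invariance of the $\arg\min$ under the affine rescaling $x \mapsto rz$ and under multiplying the objective by a positive constant, together with the fact that both proximal subproblems have a unique minimizer because the added quadratic makes each objective strongly convex (and $f_0$ is proper convex lsc, as assumed for the functions appearing in the paper). I would state these two invariances explicitly and then the identity follows by matching terms.
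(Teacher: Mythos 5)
Your proof is correct and is exactly the argument the paper intends: the paper's own proof is a one-line remark that the claim follows "by using the definition of $\mathrm{Prox}_{f_0}(\cdot)$ and change of variables," and you have simply carried out that change of variables $x = rz$ and the rescaling by $r^{-2}$ explicitly. Nothing to add.
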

\begin{proof}
The assertion can be proved by using the definition of $\mathrm{Prox}_{f_0}(\cdot)$ and change of variables.
\end{proof}
\newpage
\subsection{Proof of Lemma \textbf{\ref{lm3}}}

\begin{proof}
By the first optimality condition of problem \textbf{(\ref{pb1})}, we have
\begin{equation}\label{apeq}
\begin{aligned}
x^{*}
& = \underset{x \in \mathbb{R}^d}{\arg\min} (f_1 \circ B)(x) + f_2(x)  \\
& \Leftrightarrow 0 \in -\gamma_k \nabla f_2(x^*) - \gamma_k \partial (f_1 \circ B )(x^*) \\
& \Leftrightarrow x^* \in x^* -\gamma_k \nabla f_2(x^*) - \gamma_k B^T \partial f_1(Bx^*) \\
& \Leftrightarrow x^* \in x^* -\gamma_k \nabla f_2(x^*) - \lambda \big(B^T \circ \frac{\gamma_k}{\lambda} \partial f_1(Bx^*).
\end{aligned}
\end{equation}

\noindent
Letting
\begin{equation}\label{apeq1}
v^* \in \partial f_1(Bx^*),
\end{equation}
then equation \textbf{(\ref{apeq})} can be rewritten as
\begin{equation}\label{apeq5}
x^* = x^* - \gamma_k \nabla f_2(x^*) - \gamma_k B^T v^* .
\end{equation}
From equation \textbf{(\ref{apeq1})}, we also have
\begin{equation}\label{apeq3}
\frac{\gamma_k}{\lambda}v^* \in \partial \frac{\gamma_k}{\lambda}f_1(Bx^*),
\end{equation}
which means that
\begin{equation}\label{apeq2}
\begin{aligned}
Bx^* & = \mathrm{Prox}_{\frac{\gamma_k}{\lambda}f_1}(Bx^* + \frac{\gamma_k}{\lambda}v^*) \\
& \Leftrightarrow (Bx^* + \frac{\gamma_k}{\lambda}v^*) - \frac{\gamma_k}{\lambda}v^* = \mathrm{Prox}_{\frac{\gamma_k}{\lambda}f_1}(Bx^* + \frac{\gamma_k}{\lambda}v^*) \\
& \Leftrightarrow \frac{\gamma_k}{\lambda}v^* = (I - \mathrm{Prox}_{\frac{\gamma_k}{\lambda}f_1})(Bx^* + \frac{\gamma_k}{\lambda}v^*) \\
& \Leftrightarrow v^* = \frac{\lambda}{\gamma_k}(I - \mathrm{Prox}_{\frac{\gamma_k}{\lambda}f_1})(Bx^* + \frac{\gamma_k}{\lambda}v^*) \\
& \Leftrightarrow v^* = \frac{\lambda}{\gamma_k}Bx^* + v^* - \frac{\lambda}{\gamma_k}\mathrm{Prox}_{\frac{\gamma_k}{\lambda}f_1}(Bx^* + \frac{\gamma_k}{\lambda}v^*) \\
& \Leftrightarrow v^* = \frac{\lambda}{\gamma_k}Bx^* + v^* - \frac{\lambda}{\gamma_k}\mathrm{Prox}_{(\frac{\lambda}{\gamma_k})^{-1}f_1}\Big(\frac{\frac{\lambda}{\gamma_k}Bx^* + v^*}{\frac{\lambda}{\gamma_k}}\Big) \\
& \Leftrightarrow v^* = \frac{\lambda}{\gamma_k}Bx^* + v^* - \mathrm{Prox}_{h^k}\Big(\frac{\lambda}{\gamma_k}Bx^* + v^*\Big) \\
& \Leftrightarrow v^* = (I- \mathrm{Prox}_{h^k})\Big(\frac{\lambda}{\gamma_k}Bx^* + v^*\Big)
\end{aligned}
,
\end{equation}
where, in the second-to-last equality, we let $h^k(x) =  \frac{\lambda}{\gamma_k}f_1(\frac{x}{\frac{\lambda}{\gamma_k}})
= \frac{\lambda}{\gamma_k}f_1(\frac{\gamma_k}{\lambda}x)$ and using Lemma \textbf{\ref{APlm1}}. \\
\noindent Inserting Eq. \textbf{(\ref{apeq5})} into the last equality of Eq. \textbf{(\ref{apeq2})}, we have
\begin{equation}\label{sumup}
\begin{aligned}
v^* & = (I - \mathrm{Prox}_{h^k})\Big(\frac{\lambda}{\gamma_k}B(x^* - \gamma_k \nabla f_2(x^*)) + (I - \lambda BB^T)v^* \Big)  \\
\Leftrightarrow v^* & = \frac{\lambda}{\gamma_k}(I - \mathrm{Prox}_{\frac{\gamma_k}{\lambda}f_1})\Big(B(x^* - \gamma_k \nabla f_2(x^*)) + (I - \lambda BB^T)\frac{\gamma_k}{\lambda}v^* \Big).
\end{aligned}
\end{equation}
Combining \textbf{(\ref{apeq5})}, \textbf{(\ref{apeq2})} and \textbf{(\ref{sumup})}, we obtain
\begin{equation*}
\left\{
\begin{aligned}
v^* & = (I - \mathrm{Prox}_{h^k})\Big(\frac{\lambda}{\gamma_k}B(x^* - \gamma_k \nabla f_2(x^*)) + (I - \lambda BB^T)v^* \Big) \\
& = T_0^{(k)}(x^*,v^*) \\
x^* & = x^* - \gamma_k \nabla f_2(x^*) - \gamma_k B^T T_0^{(k)}(x^*,v^*).
\end{aligned}
\right.
\end{equation*}
The converse can be similarly verified.
This completes the proof. \qquad
\end{proof}

\subsection{Proof of Lemma \textbf{\ref{cglmeq1}}.}
\begin{proof}
Letting $(x^*,v^*)$ be that in Lemma \textbf{\ref{lm3}} and $(x_k,v_k)$ be the iterate in Algorithm 1, $T_1^{(k)}(\cdot) = (I - \mathrm{Prox}_{h^k})(\cdot)$, where $h^k(x) =  \frac{\lambda}{\gamma_k}f_1(\frac{x}{\frac{\lambda}{\gamma_k}}) = \frac{\lambda}{\gamma_k}f_1(\frac{\gamma_k}{\lambda}x)$.
 We denote
\begin{equation*}
\begin{aligned}
\varphi_{1,i}^{(k)}(x,y) & = \frac{\lambda}{\gamma_k}B(x - \gamma_k\nabla f_2^{[i]}(x))+ (I - \lambda B B^T)y \\
& = \frac{\lambda}{\gamma_k}Bg_{k,i}^{(1)}(x) + My \\
\varphi_2^{(k)}(x,y) & = \frac{\lambda}{\gamma_k}B(x - \gamma_k\nabla f_2(x))+ (I - \lambda B B^T)y \\
& = \frac{\lambda}{\gamma_k}Bg_k^{(2)}(x) + My \\
\end{aligned}
\end{equation*}
Here, $g_{k,i}^{(1)}(x) = x - \gamma_k\nabla f_2^{[i]}(x)$ and $g_{k}^{(2)}(x) = x - \gamma_k\nabla f_2(x)$, $M = I - \lambda B B^T$.

\begin{enumerate}[label=\textbf{\roman*}),leftmargin= *]
\item  Estimation of $\lVert v_{k + 1} - v^* \rVert_2^2$:
\begin{equation}\label{eq2}
\begin{aligned}
\lVert v_{k + 1} - v^* \rVert_2^2
& = \lVert T_1^{(k)}(\varphi_{1,i_k}^{(k)}(x_k,v_k)) - v^* \rVert_2^2 \\
& = \lVert T_1^{(k)}(\varphi_{1,i_k}^{(k)}(x_k,v_k)) - T_1^{(k)}(\varphi_2^{(k)}(x^*,v^*)) \rVert_2^2 \\
& \leq \big< T_1^{(k)}(\varphi_{1,i_k}^{(k)}(x_k,v_k)) - T_1^{(k)}(\varphi_2^{(k)}(x^*,v^*)),\varphi_{1,i_k}^{(k)}(x_k,v_k) - \varphi_2^{(k)}(x^*,v^*) \big> \\
& = \frac{\lambda}{\gamma_k}\big< T_1^{(k)}(\varphi_{1,i_k}^{(k)}(x_k,v_k)) - T_1^{(k)}(\varphi_2^{(k)}(x^*,v^*)),B(g_{k,i_k}^{(1)}(x_k) - g_k^{(2)}(x^*)) \big> \\
& + \big< T_1^{(k)}(\varphi_{1,i_k}^{(k)}(x_k,v_k)) - T_1^{(k)}(\varphi_2^{(k)}(x^*,v^*)), M(v_k - v^*) \big>.
\end{aligned}
\end{equation}
The second equality follows from Eq. \textbf{(\ref{eq1})} and the inequality follows from the firm non-expansiveness of $T_1^{(k)}$ (see definition \textbf{\ref{lm1}}).

Here, and in what follows, for convenience, we denote $T_1^{(k)}(\varphi_{1,i_k}^{(k)}) = T_1^{(k)}(\varphi_{1,i_k}^{(k)}(x_k,v_k))$ and
$T_1^{(k)}(\varphi_2^{(k)}) = T_1^{(k)}(\varphi_2^{(k)}(x^*,v^*))$.

\item Estimation of $\lVert x_{k + 1} - x^* \rVert_2^2:$
\begin{equation}\label{eq3}
\begin{aligned}
&\lVert x_{k + 1} - x^* \rVert_2^2 \\
& = \lVert x_k - \gamma_k \nabla f_2^{[i_k]}(x_k) - \gamma_k B^T \circ T_1^{(k)}(\varphi_{1,i_k}^{(k)}) - \big(x^* - \gamma_k \nabla f_2(x^*) - \gamma_k B^T \circ T_1^{(k)}(\varphi_2^{(k)}) \big)  \rVert_2^2 \\
& = \lVert g_{k,i_k}^{(1)}(x_k) - \gamma_k B^T \circ T_1^{(k)}(\varphi_{1,i_k}^{(k)}) - \big(g_k^{(2)}(x^*) - \gamma_k B^T \circ T_1^{(k)}(\varphi_2^{(k)}) \big) \rVert_2^2 \\
& = \lVert g_{k,i_k}^{(1)}(x_k) - g_k^{(2)}(x^*) - \gamma_k B^T \circ \big(T_1^{(k)}(\varphi_{1,i_k}^{(k)}) - T_1^{(k)}(\varphi_2^{(k)}) \big)\rVert_2^2 \\
& = \lVert g_{k,i_k}^{(1)}(x_k)  - g_k^{(2)}(x^*) \rVert_2^2  - 2\gamma_k \big< B^T \circ \big( T_1^{(k)}(\varphi_{1,i_k}^{(k)}) - T_1^{(k)}(\varphi_2^{(k)})\big), g_{k,i_k}^{(1)}(x_k) - g_k^{(2)}(x^*) \big> \\
& + \frac{\gamma_k^2}{\lambda^2}\lVert \lambda B^T \circ ( T_1^{(k)}(\varphi_{1,i_k}^{(k)}) - T_1^{(k)}(\varphi_2^{(k)}) ) \rVert_2^2 \\
& = \lVert g_{k,i_k}^{(1)}(x_k)  - g_k^{(2)}(x^*) \rVert_2^2  - 2\gamma_k \big< B^T \circ \big( T_1^{(k)}(\varphi_{1,i_k}^{(k)}) - T_1^{(k)}(\varphi_2^{(k)})\big), g_{k,i_k}^{(1)}(x_k) - g_k^{(2)}(x^*) \big> \\
& - \frac{\gamma_k^2}{\lambda}\lVert ( T_1^{(k)}(\varphi_{1,i_k}^{(k)}) - T_1^{(k)}(\varphi_2^{(k)}) ) \rVert_M^2 + \frac{\gamma_k^2}{\lambda}\lVert ( T_1^{(k)}(\varphi_{1,i_k}^{(k)}) - T_1^{(k)}(\varphi_2^{(k)}) ) \rVert_2^2.
\end{aligned}
\end{equation}
The first equality follows from Eq. \textbf{(\ref{eq1})}. In the last equality, we use the definition $M = I - \lambda BB^T$ and $\lVert y \rVert_M = \sqrt{<y,My>}$.

\item From \textbf{(\ref{eq2})} and \textbf{(\ref{eq3})}, we have
\begin{equation}\label{eq4}
\begin{aligned}
& \lVert x_{k + 1} - x^* \rVert_2^2 + \frac{\gamma_{k + 1}^2}{\lambda}\lVert v_{k + 1} - v^* \rVert_2^2 \\
& =  \lVert g_{k,i_k}^{(1)}(x_k)  - g_k^{(2)}(x^*) \rVert_2^2  - 2\gamma_k \big< B^T \circ \big( T_1^{(k)}(\varphi_{1,i_k}^{(k)}) - T_1^{(k)}(\varphi_2^{(k)})\big), g_{k,i_k}^{(1)}(x_k) - g_k^{(2)}(x^*) \big> \\
& - \frac{\gamma_{k}^2}{\lambda}\lVert ( T_1^{(k)}(\varphi_{1,i_k}^{(k)}) - T_1^{(k)}(\varphi_2^{(k)}) ) \rVert_M^2 + \frac{\gamma_{k}^2}{\lambda}\lVert ( T_1^{(k)}(\varphi_{1,i_k}^{(k)}) - T_1^{(k)}(\varphi_2^{(k)}) ) \rVert_2^2 \\
& + \frac{\gamma_{k + 1}^2}{\lambda}\lVert ( T_1(\varphi_{1,i_k}^{(k)}) - T_1^{(k)}(\varphi_2^{(k)}) ) \rVert_2^2 \\
& \leq \lVert g_{k,i_k}^{(1)}(x_k)  - g_k^{(2)}(x^*) \rVert_2^2  - 2\gamma_k \big< B^T \circ \big( T_1^{(k)}(\varphi_{1,i_k}^{(k)}) - T_1^{(k)}(\varphi_2^{(k)})\big), g_{k,i_k}^{(1)}(x_k) - g_k^{(2)}(x^*) \big> \\
& - \frac{\gamma_{k}^2}{\lambda}\lVert ( T_1^{(k)}(\varphi_{1,i_k}^{(k)}) - T_1^{(k)}(\varphi_2^{(k)}) ) \rVert_M^2 + 2\frac{\gamma_{k}^2}{\lambda}\lVert ( T_1^{(k)}(\varphi_{1,i_k}^{(k)}) - T_1^{(k)}(\varphi_2^{(k)}) ) \rVert_2^2 \\
& \leq \lVert g_{k,i_k}^{(1)}(x_k)  - g_k^{(2)}(x^*) \rVert_2^2  - \underline{2\gamma_k \big< B^T \circ \big( T_1^{(k)}(\varphi_{1,i_k}^{(k)}) - T_1^{(k)}(\varphi_2^{(k)})\big), g_{k,i_k}^{(1)}(x_k) - g_k^{(2)}(x^*) \big>} \\
& - \frac{\gamma_{k}^2}{\lambda}\lVert ( T_1^{(k)}(\varphi_{1,i_k}^{(k)}) - T_1^{(k)}(\varphi_2^{(k)}) ) \rVert_M^2 \\
& + \underline{2\frac{\gamma_{k}^2}{\lambda} \frac{\lambda}{\gamma_k}\big< T_1^{(k)}(\varphi_{1,i_k}^{(k)}) - T_1^{(k)}(\varphi_2^{(k)}),B(g_{k,i_k}^{(1)}(x_k) - g_k^{(2)}(x^*)) \big>} \\
& + 2\frac{\gamma_{k}^2}{\lambda} \big< T_1^{(k)}(\varphi_{1,i_k}^{(k)}) - T_1^{(k)}(\varphi_2^{(k)}), M(v_k - v^*) \big>\\
& = \lVert g_{k,i_k}^{(1)}(x_k)  - g_k^{(2)}(x^*) \rVert_2^2  - \underline{2\gamma_k \big< B^T \circ \big( T_1^{(k)}(\varphi_1^{(k)}) - T_1^{(k)}(\varphi_2^{(k)})\big), g_{k,i_k}^{(1)}(x_k) - g_k^{(2)}(x^*) \big>} \\
& - \frac{\gamma_{k}^2}{\lambda}\lVert ( T_1^{(k)}(\varphi_{1,i_k}^{(k)}) - T_1^{(k)}(\varphi_2^{(k)}) ) \rVert_M^2
+ \underline{2\gamma_{k}\big< T_1^{(k)}(\varphi_1^{(k)}) - T_1^{(k)}(\varphi_2^{(k)}),B(g_{k,i_k}^{(1)}(x_k) - g_k^{(2)}(x^*)) \big>} \\
& + 2\frac{\gamma_{k}^2}{\lambda} \big< T_1^{(k)}(\varphi_{1,i_k}^{(k)}) - T_1^{(k)}(\varphi_2^{(k)}), M(v_k - v^*) \big>\\
\end{aligned}
\end{equation}
\begin{equation*}
\begin{aligned}
& = \lVert g_{k,i_k}^{(1)}(x_k)  - g_k^{(2)}(x^*) \rVert_2^2 + 2\frac{\gamma_{k}^2}{\lambda}\big< T_1^{(k)}(\varphi_{1,i_k}^{(k)}) - T_1^{(k)}(\varphi_2^{(k)}), M(v_k - v^*) \big>  \\
& \qquad\qquad\qquad - \frac{\gamma_{k}^2}{\lambda}\lVert ( T_1^{(k)}(\varphi_{1,i_k}^{(k)}) - T_1^{(k)}(\varphi_2^{(k)}) ) \rVert_M^2 \\
& = \lVert g_{k,i_k}^{(1)}(x_k)  - g_k^{(2)}(x^*) \rVert_2^2 + \frac{\gamma_{k}^2}{\lambda}\lVert v_k - v^* \rVert_M^2 - \frac{\gamma_{k}^2}{\lambda}\lVert ( T_1^{(k)}(\varphi_{1,i_k}^{(k)}) - T_1^{(k)}(\varphi_2^{(k)}) ) - (v_k - v^*) \rVert_M^2 \\
& \leq \lVert g_{k,i_k}^{(1)}(x_k)  - g_k^{(2)}(x^*) \rVert_2^2 + \frac{\gamma_{k}^2}{\lambda}(1 - \lambda \rho_{min}(B B^T))\lVert v_k - v^* \rVert_2^2,
\end{aligned}
\end{equation*}
where the first equality uses Eq. (\textbf{\ref{eq3}}) and the second equality of (\textbf{\ref{eq2}}). The first inequality uses the fact that $\gamma_k$ is decreasing with respect to $k$. The second inequality uses Eq. (\textbf{\ref{eq2}}). The last inequality uses the fact that $0 < \lambda \leq \frac{1}{\rho_{max}(B B^T)} $, which means that $0 \preceq M \preceq (1 - \lambda \rho_{min}(B B^T))$.
\end{enumerate}
 Considering the expectations of both sides of inequality \textbf{(\ref{eq4})}, we obtain
\begin{equation}\label{eq5}
\begin{aligned}
& \mathbb{E}^{(k + 1)}(\lVert x_{k + 1} - x^* \rVert_2^2 + \frac{\gamma_{k+1}^2}{\lambda}\lVert v_{k + 1} - v^* \rVert_2^2  )\\
 & \leq \mathbb{E}^{(k + 1)}(\lVert g_{k,i_k}^{(1)}(x_k)  - g_k^{(2)}(x^*) \rVert_2^2)
+ \frac{\gamma_{k}^2}{\lambda}(1 - \lambda \rho_{min}(B B^T)) \mathbb{E}^{(k)}( \lVert v_k - v^* \rVert_2^2 ) \\
& = \mathbb{E}^{(k + 1)}\Big(\lVert x_k - x^* - \gamma_k(\nabla f_2^{[i_k]}(x_k) - \nabla f_2(x^*)) \rVert_2^2 \Big) + \frac{\gamma_{k}^2}{\lambda}(1 - \lambda \rho_{min}(B B^T)) \mathbb{E}^{(k)}( \lVert v_k - v^* \rVert_2^2 ) \\
& = \mathbb{E}^{(k)}\big(\lVert x_k - x^* \rVert_2^2\big) + \frac{\gamma_{k}^2}{\lambda}(1 - \lambda \rho_{min}(B B^T)) \mathbb{E}^{(k)}( \lVert v_k - v^* \rVert_2^2 ) \\
& - 2\gamma_k \mathbb{E}^{(k)}\big< \nabla f_2(x_k) - \nabla f_2(x^*), x_k - x^*\big> + \gamma_k^2 \mathbb{E}^{(k + 1)}(\lVert \nabla f_2^{[i_k]}(x_k) - \nabla f_2(x^*) \rVert_2^2),
\end{aligned}
\end{equation}
where, in the third term of the last equality, we use the fact that $$\mathbb{E}^{(k + 1)} \big(\nabla f_2^{[i_k]}(x_k)\big) = \mathbb{E}^{(k)}\big(\nabla f_2(x_k)\big).$$ This completes the proof. \qquad
\end{proof}

%


\begin{thebibliography}{10}

\bibitem{OSADMM}
{\sc Samaneh Azadi and Suvrit Sra}, { Towards an optimal stochastic
  alternating direction method of multipliers}, in International Conference on
  Machine Learning, 2014, pp.~620--628.

\bibitem{Kim}
{\sc Seyoung Kim, Kyung-Ah Sohn, Eric P. Xing}, {A multivariate regression approach to association analysis of a quantitative trait network, Bioinformatics}, Volume 25, Issue 12, 15 June 2009, Pages i204--i212.

\bibitem{Banerjee2008model}
{\sc Onureena Banerjee, Laurent~El Ghaoui, and Alexandre d’Aspremont}, {
  Model selection through sparse maximum likelihood estimation for multivariate
  gaussian or binary data}, Journal of Machine learning research, 9 (2008),
  pp.~485--516.

\bibitem{FISTA}
{\sc Amir Beck and Marc Teboulle}, { A fast iterative shrinkage-thresholding
  algorithm for linear inverse problems}, SIAM journal on imaging sciences, 2
  (2009), pp.~183--202.

\bibitem{ADMM}
{\sc Stephen Boyd, Neal Parikh, Eric Chu, Borja Peleato, Jonathan Eckstein,
  et~al.}, { Distributed optimization and statistical learning via the
  alternating direction method of multipliers}, Foundations and
  Trends{\textregistered} in Machine learning, 3 (2011), pp.~1--122.

\bibitem{CC01a}
{\sc Chih-Chung Chang and Chih-Jen Lin}, { {LIBSVM}: A library for support
  vector machines}, ACM Transactions on Intelligent Systems and Technology, 2
  (2011), pp.~27:1--27:27.

\bibitem{PDFP}
{\sc Peijun Chen, Jianguo Huang, and Xiaoqun Zhang}, { A primal dual fixed
  point algorithm for convex separable minimization with applications to image
  restoration}, Inverse Problems, 29 (2013).

\bibitem{PFBS}
{\sc Patrick~L Combettes and Val{\'e}rie~R Wajs}, { Signal recovery by
  proximal forward-backward splitting}, Multiscale Modeling \& Simulation, 4
  (2005), pp.~1168--1200.

\bibitem{ADMM2}
{\sc Wei Deng and Wotao Yin}, { On the global and linear convergence of the
  generalized alternating direction method of multipliers}, Journal of
  Scientific Computing, 66 (2016), pp.~889--916.

\bibitem{FOBOS}
{\sc John Duchi and Yoram Singer}, { Efficient online and batch learning
  using forward backward splitting}, Journal of Machine Learning Research, 10
  (2009), pp.~2899--2934.

\bibitem{DR1}
{\sc Jonathan Eckstein and Dimitri~P Bertsekas}, { On the douglas-rachford
  splitting method and the proximal point algorithm for maximal monotone
  operators}, Mathematical Programming, 55 (1992), pp.~293--318.

\bibitem{Friedman2008sparse}
{\sc Jerome Friedman, Trevor Hastie, and Robert Tibshirani}, { Sparse
  inverse covariance estimation with the graphical lasso}, Biostatistics, 9
  (2008), pp.~432--441.

\bibitem{SpitBregman}
{\sc Tom Goldstein and Stanley Osher}, { The split bregman method for
  l1-regularized problems}, SIAM journal on imaging sciences, 2 (2009),
  pp.~323--343.

\bibitem{FastPPT}
{\sc Osman G{\"u}ler}, { New proximal point algorithms for convex
  minimization}, SIAM Journal on Optimization, 2 (1992), pp.~649--664.

\bibitem{ADMM1}
{\sc Bingsheng He and Xiaoming Yuan}, { On the o(1/n) convergence rate of
  the douglas--rachford alternating direction method}, SIAM Journal on
  Numerical Analysis, 50 (2012), pp.~700--709.


\bibitem{ASVRG}
\leavevmode\vrule height 2pt depth -1.6pt width 23pt, { Accelerating
  stochastic gradient descent using predictive variance reduction}, in Advances
  in neural information processing systems, 2013, pp.~315--323.

\bibitem{FP2O}
{\sc Charles~A Micchelli, Lixin Shen, and Yuesheng Xu}, { Proximity
  algorithms for image models: denoising}, Inverse Problems, 27 (2011),
  p.~045009.

\bibitem{lemma}
{\sc Eric Moulines and Francis~R Bach}, { Non-asymptotic analysis of
  stochastic approximation algorithms for machine learning}, in Advances in
  Neural Information Processing Systems, 2011, pp.~451--459.

\bibitem{SAG}
{\sc N.Le Roux, M.Schmidt and Francis~R Bach}, { A stochastic gradient method with an exponential convergence rate for finite training sets}, in Advances in
  Neural Information Processing Systems, 2012, pp.~2672--2680.

\bibitem{Nesterov}
{\sc Yurii~E Nesterov}, { A method for solving the convex programming
  problem with convergence rate o (1/k\^{} 2)}, in Dokl. akad. nauk Sssr,
  vol.~269, 1983, pp.~543--547.


\bibitem{STOCADMM}
\leavevmode\vrule height 2pt depth -1.6pt width 23pt, { Stochastic
  alternating direction method of multipliers}, in International Conference on
  Machine Learning, 2013, pp.~80--88.

\bibitem{opt}
{\sc B~Polyak}, { Introduction to optimization}, Software,New York,  (1987).

\bibitem{polyak}
{\sc Boris~T Polyak}, { Introduction to optimization. optimization
  software}, Inc., Publications Division, New York, 1 (1987).

\bibitem{radford2015unsupervised}
{\sc Alec Radford, Luke Metz, and Soumith Chintala}, { Unsupervised
  representation learning with deep convolutional generative adversarial
  networks}, arXiv preprint arXiv:1511.06434,  (2015).

\bibitem{converPGD}
{\sc Lorenzo Rosasco, Silvia Villa, and Bang~C{\^o}ng V{\~u}}, { Convergence
  of stochastic proximal gradient algorithm}, arXiv preprint arXiv:1403.5074,
  (2014).

\bibitem{DR2}
{\sc Simon Setzer}, { Split bregman algorithm, douglas-rachford splitting
  and frame shrinkage}, in International Conference on Scale Space and
  Variational Methods in Computer Vision, Springer, 2009, pp.~464--476.

\bibitem{ProxSDCA}
{\sc Shai Shalev-Shwartz and Tong Zhang}, { Proximal stochastic dual
  coordinate ascent}, arXiv preprint arXiv:1211.2717,  (2012).

\bibitem{ACProxSDCA}
\leavevmode\vrule height 2pt depth -1.6pt width 23pt, { Accelerated proximal
  stochastic dual coordinate ascent for regularized loss minimization}, in
  International Conference on Machine Learning, 2014, pp.~64--72.

\bibitem{RDAOPGADMM}
{\sc Taiji Suzuki}, { Dual averaging and proximal gradient descent for
  online alternating direction multiplier method}, in International Conference
  on Machine Learning, 2013, pp.~392--400.

\bibitem{SDCAADMM}
\leavevmode\vrule height 2pt depth -1.6pt width 23pt, { Stochastic dual
  coordinate ascent with alternating direction method of multipliers}, in
  International Conference on Machine Learning, 2014, pp.~736--744.

\bibitem{Generalizedlasso}
{\sc Ryan~Joseph Tibshirani}, { The solution path of the generalized lasso},
  Stanford University, 2011.

\bibitem{Vapnik}
{\sc Vladimir~N Vapnik}, { The nature of statistical learning}, Theory,
  (1995).

\bibitem{RDA}
{\sc Lin Xiao}, { Dual averaging methods for regularized stochastic learning
  and online optimization}, Journal of Machine Learning Research, 11 (2010),
  pp.~2543--2596.

\bibitem{ProxSVRG}
{\sc Lin Xiao and Tong Zhang}, { A proximal stochastic gradient method with
  progressive variance reduction}, SIAM Journal on Optimization, 24 (2014),
  pp.~2057--2075.

\bibitem{SCASADMM}
{\sc Shen-Yi Zhao, Wu-Jun Li, and Zhi-Hua Zhou}, { Scalable stochastic
  alternating direction method of multipliers}, arXiv preprint{}
  arXiv:1502.03529,  (2015).

\bibitem{SVRGADMM}
{\sc Shuai Zheng and James~T Kwok}, { Fast-and-light stochastic admm.}, in
  IJCAI, 2016, pp.~2407--2613.

\bibitem{SAGADMM}
{\sc Wenliang Zhong and James Kwok}, { Fast stochastic alternating direction
  method of multipliers}, in International Conference on Machine Learning,
  2014, pp.~46--54.

\bibitem{OPG}
{\sc J.Duchi and Y.Singer}, { Efficient online and batch learning using forward backward splitting}, Journal of machine Learning Research,10:2873-2908,2009.

\end{thebibliography}
\end{document}